\newtheorem{thm}{Theorem}[section]
\newtheorem{lem}[thm]{Lemma}
\newtheorem{prop}[thm]{Proposition}
\newcommand{\E}{\mathbb{E}}
\newcommand{\R}{\mathbb{R}}
\newcommand{\Z}{\mathbb{Z}}
\newcommand{\N}{\mathbb{N}}
\newcommand{\e}{\varepsilon}
\def\<{\langle}
\def\>{\rangle}
\newcommand{\Pro}{\ensuremath{\mathbb{P}}}
\newcommand{\Var}{\mathop{\mathrm{Var}}\nolimits}
\newcommand{\bb}{\mathbf} 
\theoremstyle{usual}
\newtheorem{theorem}{Theorem}[section]
\newtheorem{assumption}{Assumption}[section]
\newtheorem{lemma}{Lemma}[section]
\newtheorem{proposition}{Proposition}[section]
\newtheoremstyle{likedef}
  {}%
  {}%
  {}%
  {\parindent}%
  {\bfseries}%
  {.}%
  {.5em}%
  {}%
\theoremstyle{likedef}
\newtheorem{definition}{Definition}[section]
\newtheorem{remark}{Remark}%[section]
\numberwithin{equation}{section}
\begin{document}

\title{The scaling relation $\chi = 2 \xi - 1$ for directed polymers in a random environment}

\author{Antonio Auffinger\thanks{Mathematics Department, University of Chicago, 5734 S. University Avenue, Chicago, IL 60637. Email:auffing@math.uchicago.edu.}
\and
Michael Damron\thanks{Mathematics Department, Princeton University, Fine Hall, Washington Rd., Princeton, NJ 08544. Email: mdamron@math.princeton.edu;
Research funded by an NSF Postdoctoral Fellowship and NSF grants DMS-0901534 and DMS-1007626.}}
\date{\today}
\maketitle

\begin{abstract} We prove the scaling relation $\chi = 2\xi -1$ between  the transversal exponent $\xi$ and the fluctuation exponent $\chi$ for directed polymers in a random environment in $d$ dimensions. The definition of these exponents is similar to that proposed by S. Chatterjee in \cite{Sourav} in first-passage percolation. The proof presented here also establishes the relation in the zero temperature version of the model, known as last-passage percolation.
\end{abstract}

\section{Introduction}

This paper is about Directed Polymers in a Random Environment. In this model, we place non-negative, independent, identically distributed random variables $(\tau_e)$, one at each nearest neighbor edge of $\mathbb Z^{d}$. For $\bb u$, $\bb v$ vertices of $\mathbb Z^d$, a directed path from $\bb u$ to $\bb v$ is a sequence of vertices $(\bb v_k)_{k= 0}^{n}$, and nearest neighbor edges $e_k=( \bb v_k, \bb v_{k+1} )$, $k=0, \ldots, n-1$ such that $\bb v_0 =\bb u$, $\bb v_n = \bb v$ and the coordinates of the $\bb v_k$'s are non-decreasing in $k$.

Given $\beta>0$ we define the partition function from $\bb u$ to $\bb v$ at inverse temperature $\beta$ as   
 \begin{equation*}
  Z^{\beta}(\bb u,\bb v) = \sum_{\gamma: \bb u \rightarrow \bb v} \exp(- \beta \tau(\gamma))\ ,
  \end{equation*}
where the sum runs over all directed paths from $\bb u$ to $\bb v$ and $\tau(\gamma) = \sum_{e \in \gamma} \tau_e$. 
Note that to have a non-empty collection of directed paths one needs the coordinates of the final point to be greater than or equal to those of the initial point. We will write this condition as $\bb u \leq \bb v$. We then extend the partition function to $\mathbb{R}^d$ in the natural way: if $\bb u \in \mathbb{R}^d$ then write $[\bb u]$ for the unique lattice point such that $\bb u \in [\bb u] + [-1/2,1/2)^d$. We then define $Z^\beta(\bb u, \bb v) = Z^\beta([\bb u], [\bb v])$. Associated to $Z^\beta(\bb u, \bb v)$ is the random probability measure 
\[
\mu_{\bb u,\bb v}(\gamma) = \frac{1}{Z^{\beta}(\bb u,\bb v)} \, \exp ( -\beta \mathcal \tau(\gamma) )\ .
\]

In this paper we will study the relation between three exponents. The first one, denoted by $\chi$, measures the growth of the variance of the partition function $Z^\beta(\bb 0, n \bb e)$, where
\[
\bb e = (1, \ldots, 1) \in \Z^d\ ,
\]
as $n$ goes to infinity. The second, denoted by $\xi$, measures the transversal fluctuations of a typical path sampled from $\mu^\beta_{\bb 0, n\bb e}$. The third, denoted by $\kappa$, measures the curvature of the limiting free energy in the direction $\bb e$. We will show that these exponents are related by
\begin{equation}\label{eqAD}
\chi = \kappa \xi- (\kappa-1)\ .
\end{equation}
 
This scaling relation is now known for an undirected zero temperature version of the model that we consider here (first-passage percolation) (\cite{Sourav}, see also \cite{AD2}). The directed zero temperature case can be proved by methods similar to those presented here (See Remark~\ref{rem2}). The actual values of $\chi$ and $\xi$ are known for certain ``exactly solvable" models in two dimensions  (see \cite{BCD, Joha, Joha2, Sepa} for instance). For these models in a appropriate sense, $\xi = 2/3$, $\kappa =2$ and $\chi = 1/3$ and therefore \eqref{eqAD} holds.

It is conjectured that in any dimension, under mild assumptions on the distribution of the $\tau_e$'s, $\kappa =2$. In this case, \eqref{eqAD} becomes the famous KPZ scaling relation (see \cite{KPZ}):
\begin{equation}\label{eqKPZ}
\chi = 2 \xi -1\ .
\end{equation}

We will define these exponents in Section \ref{sec: exponents}, where we also state our main result. First, we will state the shape theorem for the free energy. This theorem is the analogue of the classical shape theorem proved by Richardson \cite{Richardson} in the the Eden model and then by Cox-Durrett \cite{CDurrett} for first-passage percolation models. The shape theorem was extended to \emph{directed} percolation models by Martin in \cite{JMartin03}. Our proofs follow their ideas with minor modifications and are presented in Appendix~\ref{appendix}. Let $| \cdot |_1$ denote the $\ell_1$ norm in $\Z^d$. For $\bb x \in \Z_+^d := \{\bb z = (z_1, \ldots, z_d) \in \mathbb{Z}^d :  z_i \geq 0 \text{ for all } i\}$, define the free energy as
\begin{equation}\label{eq:freeenergy}
F(\bb 0, \bb x) = -\frac{1}{\beta} \log \frac{Z^\beta (\bb 0,  \bb x)}{d^{| \bb x|_1}}\ .
\end{equation}
(The factor $d^{-|\bb x|_1}$ is present to force $F(\bb 0, \bb x)\geq 0$.)

We prove the following basic properties in Appendix~\ref{appendix}. They are analogous to ones proved for directed last-passage percolation \cite{JMartin03}. 

\begin{proposition}\label{prop:pro1} If $\mathbb{E} \tau_e < \infty$ then for all $\mathbf x, \mathbf y \in \mathbb \R^d_+$,
\begin{enumerate}
\item the following limit exists a.s. and in $L_1$:
\[
\lim_{n\rightarrow \infty} \frac{1}{n} F(\bb 0, n \bb x) =: f(\bb x) < \infty\ .
\]

\item $f$ is nonnegative. Furthermore,
\begin{equation}\label{eq: infbound}
\inf_{\bb x \in \mathbb{R}_+^d \setminus \{\bb 0\}} \frac{f(\bb x)}{|\bb x|_1} > 0 
\end{equation}
if and only if $\mathbb{P}(\tau_e = 0) <1$.
\item $f$ is positive homogenous; that is, for any $\lambda \geq 0$, $f(\lambda \bb x) = \lambda f(\bb x)$. 

\item $f$ is invariant under permutation of the cooordinates.
\item $f(\bb x+ \bb y) \leq f(\bb x) + f(\bb y)$.
\item $f$ is continuous.

\end{enumerate}
\end{proposition}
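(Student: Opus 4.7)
The six properties together constitute the directed-polymer shape theorem, and my plan is to mimic the classical Cox--Durrett/Kingman argument in the form used by Martin~\cite{JMartin03} for directed last-passage percolation, with the free energy $F = -\beta^{-1}\log(Z^\beta/d^{|\cdot|_1})$ playing the role of the minimum passage time. The central mechanism is a subadditivity estimate: any directed path from $\bb 0$ to $\bb u + \bb v$ that passes through $\bb u$ decomposes uniquely as a concatenation of a path from $\bb 0$ to $\bb u$ with an independent path from $\bb u$ to $\bb u + \bb v$ (the two halves share only the vertex $\bb u$ and no edges), so restricting the sum defining $Z^\beta(\bb 0, \bb u + \bb v)$ to such paths gives $Z^\beta(\bb 0, \bb u + \bb v) \geq Z^\beta(\bb 0, \bb u)\,Z^\beta(\bb u, \bb u + \bb v)$. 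Taking $-\beta^{-1}\log$ and using $d^{|\bb u + \bb v|_1} = d^{|\bb u|_1}d^{|\bb v|_1}$ yields
\begin{equation*}
F(\bb 0, \bb u + \bb v) \leq F(\bb 0, \bb u) + F(\bb u, \bb u + \bb v), \qquad \bb u, \bb v \in \Z^d_+.
\end{equation*}

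Fix $\bb x \in \Z^d_+$ and set $X_{m,n} = F(m\bb x, n\bb x)$. The family is stationary and subadditive in the sense required by Kingman's theorem, and integrability of $X_{0,1}$ follows from the sandwich
\begin{equation*}
0 \leq F(\bb 0, \bb x) \leq \tau(\gamma^\ast) + (|\bb x|_1/\beta)\log d,
\end{equation*}
where the lower bound uses $Z^\beta \leq d^{|\bb x|_1}$ (there are at most $d^{|\bb x|_1}$ directed paths, each of weight at most $1$) and the upper bound is obtained by retaining only a single deterministic lexicographic path $\gamma^\ast$ in $Z^\beta$. Under $\E\tau_e<\infty$ this gives $\E|X_{0,1}|<\infty$ and Kingman yields item (1) for integer directions, with positive homogeneity (3) an immediate consequence and permutation invariance (4) inherited from the symmetry in law of $(\tau_e)_e$ under coordinate permutations. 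Subadditivity (5) is obtained by taking expectations in the displayed inequality applied to $n\bb x$ and $n\bb y$, dividing by $n$, and letting $n\to\infty$.

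The technically nontrivial item is the uniform lower bound in (2). The direction $\Rightarrow$ is immediate, since $\tau_e\equiv 0$ reduces $Z^\beta$ to the number of paths $N_{n\bb x}$, and a Stirling estimate gives $f(\bb e)=0$. For the $\Leftarrow$ direction, Jensen's inequality applied to the convex function $-\log$ combined with the computation $\E Z^\beta(\bb 0, n\bb x) = N_{n\bb x}(\E e^{-\beta\tau_e})^{n|\bb x|_1}$ and the trivial bound $N_{n\bb x} \leq d^{n|\bb x|_1}$ gives
\begin{equation*}
\E F(\bb 0, n\bb x) \geq \frac{1}{\beta}\log\frac{d^{n|\bb x|_1}}{N_{n\bb x}} + \frac{n|\bb x|_1}{\beta}\Paren{-\log \E e^{-\beta\tau_e}} \geq \frac{n|\bb x|_1}{\beta}\Paren{-\log \E e^{-\beta\tau_e}}.
\end{equation*}
If $\mathbb{P}(\tau_e = 0) < 1$ then $\E e^{-\beta\tau_e} < 1$, so $c:= -\beta^{-1}\log \E e^{-\beta\tau_e}>0$ and the $L^1$ part of Kingman gives $f(\bb x)/|\bb x|_1 \geq c$ uniformly in $\bb x$.

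It remains to extend $f$ from $\Z^d_+$ to $\R^d_+$ and to obtain continuity (6). Items (3) and (5) together make $f$ convex on $\Q^d_+$, which extends continuously to the interior of $\R^d_+$; the global linear upper bound $f(\bb x)\leq |\bb x|_1(\E\tau_e + \beta^{-1}\log d)$ together with the one-sided estimate $f(\bb x) - f(\bb y) \leq f(\bb x - \bb y)$ valid when $\bb x \geq \bb y$ componentwise (a direct consequence of (5)) propagates continuity to the boundary by a $\min(\bb x, \bb y)$ sandwich. The main obstacle is the synchronization step: producing a single full-measure event on which $F(\bb 0, n\bb x)/n\to f(\bb x)$ simultaneously for all $\bb x\in\R^d_+$, in a manner compatible with the floor discretization $\bb x\mapsto [\bb x]$. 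This requires a Borel--Cantelli control of $\sup|F(\bb 0, n\bb u) - F(\bb 0, n\bb v)|/n$ over a dense rational net, for which the moment bound from the sandwich above is sufficient; the argument mirrors the one carried out in \cite{JMartin03} in the last-passage setting.
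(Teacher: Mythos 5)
Your subadditivity inequality, the Jensen/annealed bound for item (2), and the use of Kingman's theorem in lattice directions all coincide with the paper's argument, and items (3)--(5) are handled the same way. However, there are two genuine gaps. The first concerns the a.s.\ and $L^1$ limit in arbitrary real directions. For irrational $\bb x$ the family $F(m\bb x,n\bb x)$ is not stationary (the rounding $\bb x\mapsto[\bb x]$ breaks translation invariance), so Kingman does not apply directly, and your proposed repair---approximating by a rational net and controlling $\sup_n|F(\bb 0,n\bb u)-F(\bb 0,n\bb v)|/n$ by Borel--Cantelli ``from the moment bound''---does not work under the stated hypothesis $\E\tau_e<\infty$. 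The discrepancy term $F([n\bb u],[n\bb v])$ is bounded by the weight of a deterministic path of length of order $\delta n$ whose edge set changes with $n$; summability of $\mathbb{P}\bigl(\tau(\gamma^*_n)>\e n\bigr)$ is a complete-convergence statement that in general requires second moments, so Borel--Cantelli fails with only a first moment. Nor can you fall back on Martin's treatment of non-lattice directions in \cite{JMartin03}, since that argument uses monotonicity of the passage time in the endpoint, which has no analogue for $F$ (enlarging the endpoint changes energy and entropy in competing directions). The paper circumvents this with Hoffman's enlarged-space trick: it works on $[-1/2,1/2)^d\times\Omega$ with a uniform random offset, which restores stationarity so that Kingman applies in every real direction, and the transfer back to $\mathbb{P}$ only needs a.s.\ control of a supremum of free energies over a window of \emph{bounded} size, whose finite mean makes $\sum_n\mathbb{P}(\sup\ge\e n)$ summable with no extra moment assumptions.

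The second gap is continuity (item 6) at the boundary of $\R^d_+$. Convexity gives continuity only on the open orthant, and your ``min sandwich'' based on $f(\bb x)\le f(\bb y)+f(\bb x-\bb y)$ controls only one side: it bounds how much $f$ can exceed its value at a dominated nearby point, whereas the real difficulty at a face is the reverse inequality, e.g.\ showing $f(\bb x)-f(\bb x+h\bb e_j)\to 0$ as $h\downarrow 0$ when $x_j=0$. Opening a new coordinate makes roughly $e^{n\phi(h,\bb x)}$ additional path decompositions available, and one must show that the resulting entropy and energy gain vanishes as $h\to0$; since $f$ is not monotone in the endpoint, this does not follow from subadditivity or convexity. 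The paper proves it in Lemma~\ref{lem:lem127} (following Lemma~3.2 of \cite{JMartin03}) by counting the decompositions across hyperplanes, using the Azuma-type concentration bound of Lemma~\ref{lem:conc} to beat the entropy factor for bounded weights, and then removing the boundedness assumption via the Truncation Lemma~\ref{truncation}. Some version of that combinatorial-plus-concentration argument is needed and is absent from your proposal.
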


%\begin{remark}
%The condition $\mathbb{E} \tau_e^{d+\alpha}<\infty$ for some $\alpha>0$ is needed for $\mathbb{E} T(\bb 0, \bb x) < \infty$, where 
%\begin{equation}\label{eq: LPP def}
%T(\bb 0, \bb x) = \sup_{\gamma : \bb 0 \to \bb x} \tau(\gamma)
%\end{equation}
%is the last-passage time (see \cite[Lemma~3.5(i)]{JMartin03}). This implies 
%\[
%\mathbb{E} F(\bb 0, \bb x) \leq \mathbb{E} T(\bb 0, \bb x) + \frac{\log d}{\beta} |\bb x|_1 < \infty\ .
%\]
%\end{remark}

The function $f$ will be called the limiting free energy.  We set 
\[
B_t =  \{ \bb x \in \R^{d}_+ : F(\bb 0, \bb x) \leq t\} \text{ and } B= \{ \bb x \in \R^{d}_+ : f(\bb x) \leq 1\}\ .
\]
Note that by the above proposition, $B$ is compact and convex. The shape theorem is then the following:

\begin{proposition} \label{shapethm} 
If $\mathbb{E} \tau_e^{d+\alpha} < \infty$ for some $\alpha>0$ and $\mathbb{P}(\tau_e=0) <1$, then for any $\e >0$ 
\[
\Pro\bigg( (1-\e) B \subseteq  \frac{B_t}{t} \subseteq (1+\e)B \quad \text{for all sufficiently large} \; t\bigg) =1\ .
\]
\end{proposition}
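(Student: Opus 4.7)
My plan is to adapt the classical Cox--Durrett argument for first-passage percolation to the directed polymer setting, closely paralleling Martin's treatment of directed last-passage percolation in \cite{JMartin03}. The proof decomposes into three steps: pointwise convergence of $t^{-1} F(\bb 0, t\bb x)$ along rational directions (already provided by Proposition \ref{prop:pro1}(1)), a locally uniform linear growth bound on $F$ that exploits the moment hypothesis $\E \tau_e^{d+\alpha} < \infty$, and a sandwiching argument based on subadditivity that upgrades pointwise to Hausdorff convergence of $B_t/t$ toward $B$.

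\textbf{Step 1.} I would apply Proposition \ref{prop:pro1}(1) along each direction in a countable dense subset of $\Q^d_+$ to produce a single event of probability one on which $n^{-1} F(\bb 0, n \bb x) \to f(\bb x)$ for every rational $\bb x$; positive homogeneity then extends this to convergence along every rational ray.

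\textbf{Step 2.} The main technical point is to show that almost surely
\[
\limsup_{m \to \infty}\; \frac{1}{m} \max_{\bb y \in \Z^d_+,\; |\bb y|_1 \leq m}\; F(\bb 0, \bb y) \;<\; \infty.
\]
Bounding $Z^{\beta}(\bb 0, \bb y)$ from below by the contribution of a single deterministic monotone (staircase) path $\gamma_{\bb y}$ from $\bb 0$ to $\bb y$ gives
\[
F(\bb 0, \bb y) \;\leq\; \tau(\gamma_{\bb y}) + \frac{|\bb y|_1 \log d}{\beta},
\]
so the task reduces to controlling a maximum of i.i.d.\ sums over the $O(m^d)$ endpoints. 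A truncation plus Borel--Cantelli argument over cubes of geometrically increasing side length yields the estimate, with the exponent $d + \alpha$ providing the necessary balance between polynomial volume growth and the tail of $\tau_e$. This is the step that will genuinely require the moment hypothesis, and it is the main obstacle in the proof.

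\textbf{Step 3.} Fix $\e > 0$. Using the compactness and convexity of $B$, the continuity and positive homogeneity of $f$, and the strict lower bound \eqref{eq: infbound}, I would select finitely many rational points $\bb x_1, \ldots, \bb x_N$ with $f(\bb x_k) \approx 1$ so that every $\bb y \in \R^d_+$ with $f(\bb y/t) \in [1-\e, 1+\e]$ can be bracketed in the componentwise order by two of them: $t \bb x_i \leq \bb y \leq t \bb x_j$ with $|\bb y - t \bb x_i|_1 \vee |\bb y - t \bb x_j|_1 \leq \delta t$ for a sufficiently small $\delta = \delta(\e)$. Subadditivity (Proposition \ref{prop:pro1}(5), applied after translation) then yields the sandwich
\[
F(\bb 0, t \bb x_j) - F(\bb y, t \bb x_j) \;\leq\; F(\bb 0, \bb y) \;\leq\; F(\bb 0, t \bb x_i) + F(t \bb x_i, \bb y).
\]
Step 1 keeps the endpoint terms within $\e t$ of $t$, while Step 2, applied after translating the origin to $t \bb x_i$ or $\bb y$, bounds each error term $F(\cdot, \cdot)/t$ by a constant multiple of $\delta$ almost surely as $t \to \infty$. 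Choosing $\delta$ small and passing to a sequence $\e_k \downarrow 0$ with summable failure probabilities via a final Borel--Cantelli argument then completes the proof of both inclusions simultaneously.
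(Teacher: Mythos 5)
Your overall skeleton (pointwise convergence along rational rays, a linear growth estimate, then a subadditive sandwich) is the classical Cox--Durrett route, and it differs from the paper, which instead reduces the shape theorem to the single statement $|F(\bb 0,\bb z)-f(\bb z)|<\e|\bb z|_1$ for all but finitely many $\bb z$ and proves it in two pieces: a deterministic comparison $|\E F(\bb 0,\bb z)-f(\bb z)|<\e|\bb z|_1$ (Lemma~\ref{lem:lem2ls}), where subadditivity and translation invariance are used only in expectation, and a concentration estimate $|F(\bb 0,\bb z)-\E F(\bb 0,\bb z)|<\e|\bb z|_1$ (Lemma~\ref{lem:lem1ls}), proved by Azuma's inequality for bounded weights and upgraded to the $\E\tau_e^{d+\alpha}<\infty$ case through the Truncation Lemma, which bounds $F-F_L$ by a directed last-passage time and invokes Martin's LPP results. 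That structure is chosen precisely so that all randomness is controlled from the single base point $\bb 0$.

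The genuine gap in your proposal is in Step 3, where you need almost sure control of the error terms $F(t\bb x_i,\bb y)$ and $F(\bb y, t\bb x_j)$ with $|\bb y-t\bb x_i|_1\le\delta t$. Your Step 2 is an a.s.\ statement about the maximum of $F(\bb 0,\cdot)$ from the \emph{fixed} base point $\bb 0$; you cannot simply ``apply it after translating the origin to $t\bb x_i$ or $\bb y$,'' because for each $t$ (and each of the unboundedly many relevant base points) this is a different event, and intersecting them requires quantitative, summable failure probabilities. The only quantitative bound your proposal makes available is the comparison with a single deterministic staircase path, i.e.\ a sum of order $\delta t$ i.i.d.\ weights with finite $(d+\alpha)$-th moment; a Fuk--Nagaev or truncation bound then gives a per-path tail of order $t^{1-(d+\alpha)}$, hence a failure probability per scale $t$ of order $t^{-\alpha}$ even for a single base point and, after union over the $O(t^d)$ base points actually needed, of order $t^{d-\alpha}$. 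Since the proposition must hold for \emph{every} $\alpha>0$, these are not summable over $t$ and your ``final Borel--Cantelli'' cannot be run; nor does monotonicity let you interpolate between sparse scales, because the base points themselves move. To repair this along your lines you would either have to replace the single-path bound by the much stronger bound $F(\bb u,\bb v)\le \min_{\gamma:\bb u\to\bb v}\tau(\gamma)+\tfrac{\log d}{\beta}|\bb v-\bb u|_1$ and prove a Kesten/Cox--Durrett-type uniform estimate over all base points for the directed first-passage minimum, or restructure the argument as the paper does, routing translation invariance through expectations and handling fluctuations only from the origin via concentration plus truncation. As written, Step 3 does not follow from Steps 1 and 2.
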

We prove Propositions \ref{prop:pro1} and \ref{shapethm} in Appendix~\ref{appendix}.

\subsection{Exponents and main result}\label{sec: exponents}
We will now rigorously define the three exponents mentioned above.

Let $|\cdot|$ denote the Euclidean norm in $\R^{d}$.  Our main assumption on the limiting free energy is the following curvature requirement in the diagonal direction.
\begin{assumption}\label{assumption1} There exists a positive number $\kappa$ and positive constants $C_1, C_2, \e$ such that if $\bb z \cdot \bb e = 0$  and $|\bb z|<\e$  then 
\begin{equation}\label{eqassumption}
C_1 |\bb z|^{\kappa} \leq |f(\bb e+\bb z)-f(\bb e)| \leq C_2 |\bb z|^{\kappa}\ . \quad
\end{equation}
\end{assumption}
\begin{remark}We fixed the direction $\mathbf{e}$ to simplify notation. All theorems can be extended to any direction where the analogue of \eqref{eqassumption} holds. It is worth noting that it is always possible to find directions (possibly different) where the lower and upper bounds of \eqref{eqassumption} hold with $\kappa = 2$. (See for instance \cite[Section 5]{Sourav}.) 
\end{remark}

\begin{definition}\label{def1} The number $\kappa$ that satisfies \eqref{eqassumption} is called the curvature exponent of the polymer model in the diagonal direction.
\end{definition}

We now define the other two exponents.  Given $\bb x \in \R^{d}$ we set $L(\bb x)$ to be the line segment in $\R^d$ that interpolates between $\bb 0$ and  $ \bb x$. For any $r>0$, we define the cylinder of radius $r$ between $\bb 0$ and  $ \bb x$ as the set
 \[
 C_{\bb x}[r] := \left\{ \bb z \in \Z^d:  \inf_{\bb w \in L (\bb x)} |\bb z - \bb w|< r \right\}\ .
 \]
We say that a nearest neighbor path $\gamma$ is in the cylinder $C_{\bb x}[r]$ if all vertices of $\gamma$ lie in $C_{\bb x}[r]$.

\begin{definition}\label{def2}
The transversal exponent $\xi_a$ is the smallest real number such that for any $\xi' > \xi_a$  there exist $\alpha, \delta >0$ such that for all $n$ \begin{equation}
\Pro\bigg( \mu_{\bb 0, n\bb e} (\gamma \in C_{n\bb e}[n^{\xi'}])<1-\frac{1}{n^{1+\alpha}}\bigg) \leq e^{- n^\delta}\ .
\end{equation}
\end{definition}

\begin{definition}\label{def2b}
The transversal exponent $\xi_b $ is defined as 
\begin{equation}
\xi_b = \inf \bigg \{ \xi: \forall \; \e > 0, \quad \Pro \bigg( \mu_{\bb 0, n\bb e} (\gamma \in C_{n\bb e}[n^{\xi}] )> 1- \e \bigg)\rightarrow 1 \bigg \} \ .
\end{equation}
\end{definition}

Roughly speaking, the exponent $\xi$ is such that a typical polymer path of length $n$ deviates from the straight line by a distance of order $n^\xi$. Definition~\ref{def2} guarantees that the path is inside any cylinder of radius $n^{\xi'}$ for $\xi'>\xi_a$, while Definition~\ref{def2b} guarantees that a cylinder of radius $n^{\xi''}$ for $\xi'' < \xi_b$ is not large enough to contain the path. Note that trivially $0 \leq \xi_b \leq \xi_a \leq 1$.

We will need to define two fluctuation exponents. 

\begin{definition}\label{def3} The fluctuation exponent $\chi_a$ is defined as the smallest number such that for any $\chi'>\chi_a$,  there exists $\alpha >0$ such that
\begin{equation}\label{above}
\sup_{\bb v \in \mathbb{Z}_+^d \setminus \{ \bb 0\}} \E \exp\bigg( \alpha \frac{| F (\bb 0,\bb v ) - \mathbb{E}F(\bb 0, \bb v)|}{|\bb v |_1^{\chi'}}  \bigg) < \infty\ . 
 \end{equation}
\end{definition}

Definition~\ref{def3} says that the collection of random variables $\left(\frac{|F(\bb 0, \bb v) - \mathbb{E} F(\bb 0, \bb v)|}{|\bb v|_1^{\chi'}}\right)_{\bb v \in \mathbb{Z}_+^d \setminus \{\bb 0\}}$ is exponentially tight. It is known by the work of Piza \cite[Proposition~1(c)]{Piza} (see also \cite[Equation~(1.15)]{Kesten93}) that this holds for $\chi'=1/2$ if one assumes finite exponential moments for the distribution of $\tau_e$. The next definition guarantees that the variance of $F(\bb 0, \bb v)$ is not significantly smaller than $|\bb v|^{2\chi_b}$.

\begin{definition}\label{def4} The fluctuation exponent $\chi_b$ is defined as the largest number such that for any $\chi''<\chi_b$
\begin{equation}\label{below}
\inf_n \frac{\Var(F (\bb 0, n \bb e))}{n^{2\chi''}} >0\ .
 \end{equation}
\end{definition}

Our main result in this paper is the following. 
\begin{theorem}\label{theorem1} Assume that the polymer model has exponents as in definitions \ref{def1}-\ref{def4} with $\chi := \chi_a = \chi_b$ and $\xi:= \xi_a = \xi_b$. Then 
\begin{equation} \label{KPZu}
\chi = \kappa \xi - (\kappa -1)\ .
\end{equation}  
\end{theorem}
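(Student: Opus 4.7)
The plan is to prove the two inequalities
\[
\chi_a \leq \kappa\xi_a - (\kappa-1) \qquad \text{and} \qquad \chi_b \geq \kappa\xi_b - (\kappa-1),
\]
which, under the hypotheses $\chi = \chi_a = \chi_b$ and $\xi = \xi_a = \xi_b$, combine to give \eqref{KPZu}. I would follow the strategy of Chatterjee for first-passage percolation \cite{Sourav} (cf.~\cite{AD2}), with Assumption~\ref{assumption1} taking the place of uniform convexity. The central device is a midpoint decomposition: every directed path from $\bb 0$ to $n\bb e$ has length $|n\bb e|_1 = nd$, so at time $nd/2$ it passes through some $\bb v$ with $|\bb v|_1 = nd/2$ and $\bb 0 \leq \bb v \leq n\bb e$. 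For any such $\bb v$, $|\bb v|_1 + |n\bb e - \bb v|_1 = |n\bb e|_1$, so the $d^{-|\cdot|_1}$ factors in \eqref{eq:freeenergy} cancel and
\[
\mu_{\bb 0, n\bb e}(\gamma\text{ passes through }\bb v) = \exp\bigl(-\beta[F(\bb 0,\bb v) + F(\bb v, n\bb e) - F(\bb 0, n\bb e)]\bigr).
\]
Writing $\bb v = (n/2)\bb e + \bb z$ with $\bb z \cdot \bb e = 0$, homogeneity of $f$ together with Assumption~\ref{assumption1} yields, up to shape-theorem corrections,
\[
\E[F(\bb 0,\bb v) + F(\bb v, n\bb e) - F(\bb 0, n\bb e)] = \Theta\bigl(|\bb z|^\kappa / n^{\kappa-1}\bigr),
\]
with the lower and upper constants inherited from \eqref{eqassumption}.

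For the lower bound on $\chi$ I would argue by contradiction: if $\chi < \kappa\xi - (\kappa-1)$, pick $\xi'' < \xi$ and $\chi' > \chi_a$ with $\chi' < \kappa\xi'' - (\kappa-1)$, and consider midpoints $\bb v$ with $|\bb z| \sim n^{\xi''}$. The lower bound in \eqref{eqassumption} forces the mean excess above to be at least $c\, n^{\kappa\xi'' - (\kappa-1)}$, while the exponential integrability in Definition~\ref{def3} provides tails at scale $n^{\chi'} \ll n^{\kappa\xi'' - (\kappa-1)}$ for each of $F(\bb 0,\bb v)$, $F(\bb v, n\bb e)$, $F(\bb 0, n\bb e)$. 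On the resulting high-probability event, the midpoint identity gives $\mu_{\bb 0, n\bb e}(\gamma\text{ through }\bb v) \leq \exp(-c'\, n^{\kappa\xi'' - (\kappa-1)})$, and a union bound over the polynomially many such midpoints confines the polymer to $C_{n\bb e}[n^{\xi''}]$ with probability tending to $1$, contradicting $\xi_b = \xi > \xi''$.

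For the upper bound, the midpoint decomposition gives
\[
F(\bb 0, n\bb e) = -\frac{1}{\beta}\log \sum_{\bb v} \exp\bigl(-\beta[F(\bb 0,\bb v) + F(\bb v, n\bb e)]\bigr),
\]
and by Definition~\ref{def2} the midpoints with $|\bb z| > n^{\xi'}$ (any $\xi' > \xi$) contribute negligibly; on the retained range the upper bound in \eqref{eqassumption} traps $\E[F(\bb 0,\bb v) + F(\bb v, n\bb e)]$ within a deterministic window of width $O(n^{\kappa\xi' - (\kappa-1)})$. The main obstacle is to deduce that the random fluctuation of the log-sum-exp is also $O(n^{\kappa\xi - (\kappa-1)})$: a naive soft-min estimate only carries forward the individual $\chi_a$-scale fluctuations of $G(\bb v) := F(\bb 0,\bb v) + F(\bb v, n\bb e)$ and yields no improvement. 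Closing this gap requires exploiting the cross-$\bb v$ correlations of $G$, for instance via a Gaussian hypercontractive step or an Efron--Stein edge decomposition in the spirit of \cite{Sourav, AD2}; this is the technical heart of the proof. By comparison, the lower bound is relatively clean, since the polynomial union bound over midpoints is easily absorbed by the exponential tail provided by Definition~\ref{def3}.
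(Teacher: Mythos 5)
Your lower-bound half ($\chi \geq \kappa\xi-(\kappa-1)$) is essentially the paper's Newman--Piza argument, but it has one real gap: you pass from the curvature statement about $f$ to a statement about $\E F$ "up to shape-theorem corrections." The shape theorem only controls $\E F(\bb 0,\bb x)-f(\bb x)$ at scale $\e|\bb x|_1$, which is useless against the sublinear curvature gain $n^{\kappa\xi''-(\kappa-1)}$ (here $\xi''<1$). What is actually needed is that the nonrandom fluctuation $\E F(\bb 0,\bb x)-f(\bb x)$ is $O(|\bb x|_1^{\chi''}\log|\bb x|_1)$, so that $F$ concentrates around $f$ itself at scale $n^{\chi''}$; this is exactly the content of Alexander's method, Lemma~\ref{lem: AC} in the paper's Appendix B, and it does not follow from Definition~\ref{def3} plus the shape theorem alone. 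With that lemma supplied, your contradiction via the midpoint identity and a polynomial union bound is the paper's Section 3.

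The upper-bound half is where the proposal genuinely fails. You correctly observe that a soft-min/log-sum-exp over midpoints only reproduces the $\chi_a$-scale fluctuations, and you then defer "the technical heart" to an unspecified hypercontractive or Efron--Stein step; but the hypotheses (Definitions~\ref{def2}--\ref{def4}) carry no variance-decomposition or independence structure across midpoints, and the paper does not close the gap that way. Its mechanism (Section 2, after \cite{AD2}) is to introduce a transversally shifted copy $F(\bb v_n,\bb v_n+n\bb e)$ with $|\bb v_n|\asymp n^{\xi'}$ and study $\delta F=F(\bb 0,n\bb e)-F(\bb v_n,\bb v_n+n\bb e)$. On one hand, by the definition of $\xi_a$ each free energy is within $n^{-(1+\alpha)}$ of its restriction to one of two disjoint cylinders, so the two copies are effectively independent and $\Var\,\delta F\gtrsim n^{2\chi''}$ (Proposition~\ref{propdown}, via Lemma~\ref{lem: rvs}). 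On the other hand, decomposing both polymers across slabs of width $n^\eta$ near the endpoints, the middle portion cancels in the ratio \eqref{eq:27}, and the curvature assumption together with Lemma~\ref{lem: AC} bounds what remains, giving $\Var\,\delta F\lesssim n^{2\eta\chi'}+n^{2(\eta-\kappa(\eta-\xi'))}$ (Proposition~\ref{propup}). Playing these off each other and letting $\eta\to 1$ yields $\chi\le\kappa\xi-(\kappa-1)$ when $\chi>0$; the case $\chi=0$ requires a separate resampling argument following \cite[Section~9]{Sourav}, and one also verifies $\chi\ge 0$. None of these ingredients --- the shifted-copy variance comparison, the slab decomposition with cancellation, the $\chi=0$ case --- appears in your sketch, so the inequality $\chi\le\kappa\xi-(\kappa-1)$ remains unproved in the proposal.
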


We finish this section with a few remarks.

\begin{remark}\label{rem2}
The directed zero temperature case, commonly called last-passage percolation, can be analyzed in the same way (and even with the same proof) as what is given here. The only difference is that we must make the assumption $\mathbb{P}(\tau_e = S) < 1$, where $S$ is the supremum of the distribution of $\tau_e$. In particular, one can show that under the assumption of existence of exponents analogous to above, one has the relation $\chi = \kappa \xi - (\kappa - 1)$. Equation \eqref{KPZu} has been shown to hold for some definition of exponents in certain ``exactly solvable'' cases \cite{Joha}. For more information on exact solvable models the reader is invited to check the survey \cite{Corwin} and the references therein. 
\end{remark}

\begin{remark}\label{rem3}
For a log-gamma distribution on edge-weights in dimension 2, Sepp\"al\"ainen \cite{Sepa} has explicitly derived the limiting shape for the free energy. Consequently it can be verified that the exponent $\kappa$ equals 2 in this case. 
\end{remark}

\begin{remark}
Equation \eqref{KPZu} is trivially true when the environment is not present. Indeed,  for $\beta = 0$ the polymer path is roughly a simple random walk and therefore $\chi = 0$ and $\xi = 1/2$. In two dimensions, if $\beta$ scales to zero as a function of $n$ as $\beta=cn^{-1/4}$, it is also known that \eqref{KPZu} holds with $d=1$ with $\chi = 0,~ \xi = 1/2$. Interestingly, in this case, the fluctuations do not decouple from the random environment and the polymer path has non-trivial scaling limit \cite{AKQ}. Equation  \eqref{KPZu} also holds for directed polymers in thin cylinders, for directions asymptotically close to a coordinate axis \cite{ABC}.
\end{remark}

The rest of this manuscript is organized as follows. In Section 2, we prove the upper  bound $\chi \leq  \kappa \xi - (\kappa -1)$. This is the most involved part of the proof of Theorem \ref{theorem1}. In Section 3, we prove the lower bound by the same argument initially given by Newman - Piza \cite{NP}. In Appendix A we prove Proposition 1 and the Shape Theorem while in Appendix B we establish a lemma that estimates the rate of convergence of $F(\bb 0, \bb x)$ towards $f(\bb x)$.

\section{Proof of $\chi \leq \kappa \xi - (\kappa -1)$}

To prove the upper bound $\chi \leq  \kappa \xi - (\kappa -1)$ we will follow the strategy of \cite{AD2}.
We start with a lemma. Write $I(A)$ for the indicator function of the event $A$.

\begin{lem}\label{lem: rvs}
Let $X$ and $Y$ be random variables with $\|X\|_4,\|Y\|_4<\infty$ and let $B$ be an event such that
for some $\e >0$,
\[
|X-Y|I(B) \leq \e \text{ almost surely.}
\]
Then
\begin{equation}\label{eq: rvs}
|\Var~X -\Var~Y| \leq   \|X-Y\|_4(\|X\|_2+\|Y\|_2) \mathbb{P}(B^c)^{1/4} + \e(\|X\|_2+\|Y\|_2) \ .
\end{equation}
\end{lem}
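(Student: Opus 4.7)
The plan is to rewrite $\Var X - \Var Y$ as a single expectation and then split it according to whether the event $B$ occurs.

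Using the identity $\Var W = \E W^2 - (\E W)^2$ together with $a^2 - b^2 = (a-b)(a+b)$, I would first write
\[
\Var X - \Var Y \;=\; \E[(X-Y)(X+Y)] - \E[X-Y]\,\E[X+Y] \;=\; \E\bigl[(X-Y)\,Z\bigr],
\]
where $Z := (X+Y) - \E(X+Y)$. Minkowski's inequality gives $\|Z\|_2 \le \|X+Y\|_2 \le \|X\|_2 + \|Y\|_2$, so $Z$ will serve as the ``safe'' $L^2$ factor throughout. Introducing the centering is what saves us from a stray term $\E[X-Y]\,\E[X+Y]$ after the $B/B^c$ split.

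Next I would decompose $\E[(X-Y)Z] = \E[(X-Y)Z\,I(B)] + \E[(X-Y)Z\,I(B^c)]$. On $B$ the hypothesis $|X-Y|I(B)\le\e$ bounds the first piece by $\e\,\E|Z|\le\e\,\|Z\|_2\le\e(\|X\|_2+\|Y\|_2)$. On $B^c$ I would apply H\"older's inequality with the triple of exponents $(4,2,4)$, which satisfy $\tfrac14+\tfrac12+\tfrac14 = 1$, to obtain
\[
\bigl|\E[(X-Y)Z\,I(B^c)]\bigr| \;\le\; \|X-Y\|_4\,\|Z\|_2\,\mathbb{P}(B^c)^{1/4} \;\le\; \|X-Y\|_4\,(\|X\|_2+\|Y\|_2)\,\mathbb{P}(B^c)^{1/4}.
\]
Adding the two estimates yields \eqref{eq: rvs}.

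I do not expect a real obstacle: the lemma is essentially an exercise in choosing the right H\"older split. The triple $(4,2,4)$ is forced by the powers $\|X-Y\|_4$ and $\mathbb{P}(B^c)^{1/4}$ appearing on the right-hand side of \eqref{eq: rvs}, and the finiteness of $\|X\|_4, \|Y\|_4$ guarantees $\|X-Y\|_4 < \infty$ via Minkowski. Any other reasonable H\"older exponents would either demand higher moments of $X-Y$ or produce a worse power of $\mathbb{P}(B^c)$.
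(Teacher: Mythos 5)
Your proof is correct, but it takes a slightly different route than the paper. The paper first centers the variables and factors the difference of variances as $\bigl|\|\widetilde X\|_2^2-\|\widetilde Y\|_2^2\bigr| = \bigl|\|\widetilde X\|_2-\|\widetilde Y\|_2\bigr|\cdot\bigl(\|\widetilde X\|_2+\|\widetilde Y\|_2\bigr)$, applies the reverse triangle inequality to get the clean intermediate bound $\|X-Y\|_2(\|X\|_2+\|Y\|_2)$, and only then performs the $B/B^c$ split \emph{inside} the single $L^2$ norm, converting $\|(X-Y)I(B^c)\|_2$ into $\|X-Y\|_4\,\mathbb{P}(B^c)^{1/4}$ by Cauchy--Schwarz. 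You instead use the covariance identity $\Var X-\Var Y=\E[(X-Y)Z]$ with $Z=(X+Y)-\E(X+Y)$, split the expectation over $B$ and $B^c$, and apply the three-factor H\"older inequality with exponents $(4,2,4)$; this avoids the reverse-triangle-inequality detour and lands directly on the stated bound, while the paper's version has the minor virtue of isolating the reusable estimate $|\Var X-\Var Y|\le\|X-Y\|_2(\|X\|_2+\|Y\|_2)$ before any mention of the event $B$. One small inaccuracy in your write-up: the step $\|Z\|_2\le\|X+Y\|_2$ is not Minkowski (which would give the weaker $\|X+Y\|_2+|\E(X+Y)|$); it is the elementary fact that centering reduces the $L^2$ norm, i.e.\ $\|Z\|_2^2=\Var(X+Y)\le\E(X+Y)^2$. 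With that justification substituted, every step is sound and yields exactly \eqref{eq: rvs}.
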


\begin{proof}
Let $\widetilde X = X - \mathbb{E} X$ and $\widetilde Y = Y-\mathbb{E} Y$. The left side of \eqref{eq: rvs} equals
\begin{eqnarray*}
\left|\|\widetilde X\|_2^2 - \|\widetilde Y\|_2^2\right| &=& \left|\|\widetilde X\|_2 - \|\widetilde Y\|_2\right| ~\left|\|\widetilde X\|_2 + \|\widetilde Y\|_2\right| \\
&\leq& \|X-Y\|_2 (\|X\|_2 + \|Y\|_2)  \\
&=&\|(X-Y)I(B^c) + (X-Y)I(B)\|_2 (\|X\|_2 + \|Y\|_2) \\
&\leq& \|X-Y\|_4(\|X\|_2+\|Y\|_2) \mathbb{P}(B^c)^{1/4} + \e(\|X\|_2+\|Y\|_2) \ .
\end{eqnarray*}
\end{proof}

Note that by \cite[Proposition~1(b)]{Piza}, $\chi_b \leq 1/2$.   Therefore if $\xi_a = 1$ then the bound $\chi \leq \kappa \xi - (\kappa -1)$ holds.
Because we will deal with the case $\chi = 0$ in a later argument, we will now assume that
\begin{equation}\label{eq: assumption2}
\xi_a<1 \text{ and } \chi_b >0
\end{equation}
so that we can choose $\xi'$ and $\chi''$ such that
\begin{equation}\label{eq: xichiprime}
\xi_a<\xi'<1\text{ and } 0<\chi''<\chi_b\ .
\end{equation}

Let $\bb v_n$ be a point in $\Z^d$ with $ \bb v_n \cdot \bb e  = 0$  and $|\bb v_n| \in [2n^{\xi'}, 3n^{\xi'}].$ Set
\[
\delta F(n,\xi') = F(\bb 0, n \bb e) - F(\bb v_n, \bb v_n + n \bb e)\ .
\]

\subsection{Lower bound on $\Var \delta F(n,\xi')$ }
\begin{prop}\label{propdown}
Assume \eqref{eq: assumption2}. For each $\xi'$ and $\chi''$ chosen as in \eqref{eq: xichiprime}, there exists $C=C(\xi',\chi'')$ such that for all $n$,
\[
\Var~ \delta F(n,\xi') \geq Cn^{2\chi''}\ .
\]
\end{prop}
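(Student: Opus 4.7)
The strategy is a disjoint-cylinder argument: I would replace $F_1 := F(\bb 0, n\bb e)$ and $F_2 := F(\bb v_n, \bb v_n + n\bb e)$ by truncated versions depending on disjoint (hence independent) parts of the environment. Since $|\bb v_n| \geq 2n^{\xi'}$ and $\xi' > \xi_a$, the polymer's transversal fluctuations are small enough to make this truncation essentially harmless, which will give $\Var\,\delta F \approx 2\Var F_1$; then $\Var F_1 \geq c n^{2\chi''}$ from Definition~\ref{def4} closes the argument.

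To implement this, let $C_1 := C_{n\bb e}[n^{\xi'}]$ and $C_2 := \bb v_n + C_{n\bb e}[n^{\xi'}]$, and let $\tilde Z_i$ be the partition function restricted to directed paths contained in $C_i$, with associated free energy $\tilde F_i$ normalized as in \eqref{eq:freeenergy}. The condition $|\bb v_n| > 2n^{\xi'}$ makes $C_1$ and $C_2$ edge-disjoint, so $\tilde F_1$ and $\tilde F_2$ are independent; by translation invariance of $(\tau_e)$ they are identically distributed, hence
\[
\Var(\tilde F_1 - \tilde F_2) = 2\Var\tilde F_1\ .
\]
Moreover, since $\tilde F_i - F_i = -\beta^{-1}\log \mu_i(\gamma \in C_i) \geq 0$, on the event
\[
B_n := \bigcap_{i=1,2} \{\mu_i(\gamma\in C_i) \geq 1 - n^{-1-\alpha}\}
\]
one has $|F_i - \tilde F_i| \leq Cn^{-1-\alpha}$, while Definition~\ref{def2} yields $\Pro(B_n^c) \leq 2e^{-n^\delta}$.

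I would then apply Lemma~\ref{lem: rvs} twice: once with $(X,Y) = (F_1, \tilde F_1)$ and once with $(X,Y) = (\delta F, \tilde F_1 - \tilde F_2)$, both times with $\e = O(n^{-1-\alpha})$ on the corresponding subset of $B_n$. Each application produces an error of the form $n^{O(1)} e^{-n^\delta/4} + C n^{-\alpha}$, which is $o(n^{2\chi''})$ since $\chi'' > 0$. The first application gives $\Var \tilde F_1 \geq \Var F_1 - o(n^{2\chi''}) \geq c n^{2\chi''} - o(n^{2\chi''})$ via Definition~\ref{def4}, and the second gives $\Var\,\delta F \geq 2\Var \tilde F_1 - o(n^{2\chi''})$. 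Chaining these two inequalities completes the proof for $n$ large, and small $n$ are handled by adjusting the constant.

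The chief obstacle is providing polynomial-in-$n$ control on $\|F_i\|_2$, $\|\tilde F_i\|_2$ and $\|F_i - \tilde F_i\|_4$ needed by Lemma~\ref{lem: rvs}. For these, I would use the deterministic bound $0 \leq F_i, \tilde F_i \leq \tau(\gamma_0) + Cn$, obtained by bounding the partition function from below by a single fixed directed path $\gamma_0$ (chosen inside $C_i$ for the truncated version); under a finite high-order moment assumption on $\tau_e$ (well within the standing hypotheses required even to define $\chi_a$), this gives $\|F_i\|_4, \|\tilde F_i\|_4 = O(n)$, and these polynomial prefactors are easily absorbed by the exponentially small factor $\Pro(B_n^c)^{1/4} \leq (2e^{-n^\delta})^{1/4}$. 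A secondary minor point is ensuring that $C_1$ and $C_2$ are genuinely edge-disjoint rather than merely vertex-disjoint, which may force a trivial enlargement of $|\bb v_n|$ or slight shrinking of the truncation radius; this does not affect the argument.
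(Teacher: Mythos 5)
Your proposal is correct and takes essentially the same approach as the paper: the same disjoint cylinders $C_{n\bb e}[n^{\xi'}]$ and its translate by $\bb v_n$, restricted free energies that are i.i.d., applications of Lemma~\ref{lem: rvs} with $\e = O(n^{-1-\alpha})$, $\Pro(B^c)$ exponentially small via Definition~\ref{def2}, $L^2$/$L^4$ norms of order $n$ bounded by comparison with a deterministic path, and Definition~\ref{def4} to conclude. The paper's single explicit use of Lemma~\ref{lem: rvs} for $\delta F$ versus $F_1'-F_2'$ together with its ``same argument'' step giving $\Var F_1' \geq \Var F(\bb 0, n\bb e) - C$ is exactly your two applications, so there is no substantive difference.
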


\begin{proof}
Let $\mathcal{C}_1 = C_{n \bb e}[n^{\xi'}]$ and $\mathcal{C}_2 = \mathcal{C}_1 + \bb v_n$.  Note that by our choice of $\bb v_n$, $\mathcal{C}_1 \cap \mathcal{C}_2 = \varnothing$. We now define the restricted partition functions $Z_1(n)'$ and $Z_2(n)'$ as follows:

$$Z_1(n)' = \sum_{\gamma: \bb 0 \rightarrow n\bb e, \gamma \subseteq \mathcal{C}_1} \exp(-\beta \tau(\gamma)), \quad Z_2(n)'=  \sum_{\gamma: \bb v_n \rightarrow \bb v_n + n\bb e, \gamma \subseteq \mathcal{C}_2} \exp(-\beta \tau(\gamma)) $$
with the corresponding free energies $F_1'$ and $F_2'$ as in \eqref{eq:freeenergy}.

Note that $F_1'$ and $F_2'$ are independent random variables with the same distribution. We will now show that given our choice of the size of the cylinder $\mathcal C_1$, the variance of $F(\bb 0, n \bb e)$ cannot be much higher than the variance of  $F_1'$.

Let $\alpha = \alpha(\xi')$ be given as in Definition~\ref{def2}. Let $B$ be the event $\{F (\bb 0, n \bb e) \geq F_1' - \frac{1}{\beta}n^{-(1+\alpha)} \text{ and } F(\bb v_n, \bb v_n + n \bb e)\geq F_2' - \frac{1}{\beta} n^{-(1+\alpha)}\}$. Note that $$B^c \subseteq \{ \log \mu_{\bb 0, n \bb e} (\mathcal C_1) \leq - n^{-(1+\alpha)} \}  \cup \{ \log \mu_{\bb v_n, \bb v_n + n \bb e}(\mathcal C_2) \leq - n^{-(1+\alpha)} \}\ .$$ Therefore from the inequality $\exp (-x) \leq 1 - \frac{1}{2}x$ for $x$ small and positive and by the definition of $\xi_a$ there exists $\delta >0$ so that $\mathbb{P}(B^c) \leq 2e^{-\delta n}$ for $n$ large enough.

By Lemma~\ref{lem: rvs} with $X=\delta F(n,\xi') $,  $Y=\delta F(n,\xi')':= F_1' - F_2'$ and $\e = \frac{2}{\beta}n^{-(1+\alpha)}$ there exists $C_1 >0$ such that
\begin{equation*}
\begin{split}
\Var~ &\delta  F(n,\xi')   \\  &\geq \Var~\delta F(n,\xi') ' - (\|\delta  F(n,\xi') \|_2 + \| \delta F(n,\xi') '\|_2) (\e 
+ \mathbb{P}(B^c)^{1/4} \| \delta  F(n,\xi') - \delta  F(n,\xi')' \|_4)\\
&\geq\Var~\delta  F(n,\xi') ' - C_1n^2 e^{-\delta n/4} -   C_1n^{-\alpha}\ .
\end{split}
\end{equation*}
Here we have used that each $\delta F$ is a difference of logarithms of partition functions, each of which has $L^4$ norm bounded above by $C n$ (compare for example to the contribution given by a deterministic path) for some constant $C$. Therefore there exists a constant $C_2$ such that for all $n$,
\begin{equation}\label{eq: last1}
\Var~\delta  F(n,\xi') \geq\Var~\delta  F(n,\xi')'  - C_2\ .
\end{equation}
But $\delta  F(n,\xi')' $ is the difference of i.i.d. random variables distributed as $F_1'$, so 
\begin{equation}\label{eq: last2}
\Var~\delta  F(n,\xi')'  = 2 \Var~ F_1(n)'\ .
\end{equation}
By exactly the same argument as that given above, we can find $C_3$ such that for all $n$,
\[
\Var~ F_1(n)'\geq \Var F (\bb 0, n \bb e) - C_3\ .
\]
Now, combining \eqref{eq: last1} with \eqref{eq: last2} and using the definition of $\chi''$, we can find $C_4$ such that for all $n$, $\Var~ \delta F(n,\xi') \geq C_4n^{2\chi''}$. 
\end{proof}

\subsection{Upper bound on $\Var \delta F(n,\xi')$}\label{subsec: upperbound}

In this section  we work with the same choice of $\xi'$ that satisfies \eqref{eq: xichiprime}. We will prove the following.
\begin{prop}\label{propup}
Assume \eqref{eq: assumption2} and that \eqref{def1} holds for some $C_1,C_2,\e$ and $\kappa$. For each $\eta$ satisfying $\xi'<\eta<1$ and each $\chi'>\chi_a$, there exists $D=D(\eta,\chi')$ such that for all $n$,
\[
\Var~\delta F(n,\xi') \leq Dn^{2\eta(1-\kappa)+2\xi'\kappa} + Dn^{2\eta \chi'}\ .
\]
\end{prop}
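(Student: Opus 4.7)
The strategy, as in \cite{AD2}, is to compare $\delta F(n,\xi')$ to a regularized difference $\widehat{\delta F}$ in which each of the two free energies has been averaged over a transverse box of diameter $n^\eta$, and then to invoke Lemma~\ref{lem: rvs}. The two ingredients driving the bound are: (i) the curvature Assumption~\ref{assumption1} combined with the transversal bound (Definition~\ref{def2}), which controls the perturbation $|\delta F - \widehat{\delta F}|$ by $\sim n^{\eta(1-\kappa)+\xi'\kappa}$; and (ii) the exponential tightness from Definition~\ref{def3} which, because $|\bb v_n|\lesssim n^{\xi'} \ll n^\eta$, lets us bound $\Var\widehat{\delta F}$ at the effective scale $n^\eta$ rather than $n$.

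Concretely, let $A \subset \Z^d$ be a box of diameter $n^\eta$ in the hyperplane through $\bb 0$ orthogonal to $\bb e$. Put
\[
\widehat Z_1 = \sum_{\bb u \in A} Z^\beta(\bb 0, n\bb e+\bb u), \quad \widehat Z_2 = \sum_{\bb u \in A} Z^\beta(\bb v_n, \bb v_n+n\bb e+\bb u),
\]
with $\widehat F_i = -\tfrac{1}{\beta}\log\widehat Z_i$ (suitably normalized as in \eqref{eq:freeenergy}), and set $\widehat{\delta F} = \widehat F_1 - \widehat F_2$.

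Step 1 (Curvature comparison). On the event $B$ that both polymer measures concentrate on their respective $n^{\xi'}$-cylinders up to $n^{-(1+\alpha)}$, an event with $\Pro(B^c) \le 2e^{-n^\delta}$ by Definition~\ref{def2}, I would establish that $|\delta F(n,\xi') - \widehat{\delta F}| \le C n^{\eta(1-\kappa)+\xi'\kappa}$ almost surely. The inputs are homogeneity of $f$, the bound $f(n\bb e+\bb u)-f(n\bb e) \le C n^{1-\kappa}|\bb u|^\kappa$ from Assumption~\ref{assumption1}, and the Appendix~B rate of convergence $F\to f$; the reduction from the crude curvature exponent $1-\kappa+\eta\kappa$ to the claimed $\eta(1-\kappa)+\xi'\kappa$ uses that (on $B$) only $\bb u \in A$ that are reachable by the $n^{\xi'}$-cylinder polymer measure contribute nontrivially to $\log\widehat Z_i - \log Z^\beta(\bb 0,n\bb e)$.

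Step 2 (Variance of the regularized difference). Since $|\bb v_n| \lesssim n^{\xi'} \ll n^\eta$, the averaging regions $A$ and $A+\bb v_n$ overlap on all but a thin boundary layer; coupling the two environments on the common part reduces $\widehat{\delta F}$ to a free-energy increment over a domain of transverse diameter $n^\eta$. Applying Definition~\ref{def3} at this scale gives $\Var\widehat{\delta F} \le C n^{2\eta\chi'}$.

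Step 3 (Assembly). Apply Lemma~\ref{lem: rvs} with $X=\delta F(n,\xi')$, $Y=\widehat{\delta F}$, $B$ as in Step~1, and $\e = Cn^{\eta(1-\kappa)+\xi'\kappa}$. The trivial deterministic bound $\|X\|_4, \|Y\|_4 \le Cn$ (comparison to a single directed path) absorbs the rare-event contribution into an exponentially small error. Combining with Step~2 yields
\[
\Var\delta F(n,\xi') \le \Var\widehat{\delta F} + C n^{\eta(1-\kappa)+\xi'\kappa}\cdot n^{\chi'} + e^{-c n^\delta},
\]
and splitting the cross term by AM--GM between the two squares produces the claimed bound $Dn^{2\eta(1-\kappa)+2\xi'\kappa} + Dn^{2\eta\chi'}$.

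The hard part is Step~2. The naive application of Definition~\ref{def3} to the individual $\widehat F_i$ gives only $\Var\widehat F_i \le Cn^{2\chi'}$, which is far too weak. Extracting the improved scale $n^\eta$ relies essentially on the overlap of $A$ and $A+\bb v_n$ and on reinterpreting $\widehat{\delta F}$ as a comparison of two heavily-coupled point-to-hyperplane partition functions, so that only the $n^\eta$-scale ``boundary'' fluctuation survives; finding the right coupling/telescoping that makes Definition~\ref{def3} applicable at the intermediate scale is the crux of the argument.
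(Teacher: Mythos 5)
There is a genuine gap, and it sits exactly where you flag ``the crux'': your Step~2 is not an argument, and the mechanism you sketch for it cannot work. Smearing only the \emph{endpoints} over a transverse box $A$ of diameter $n^{\eta}$ produces no cancellation: $\widehat Z_1=\sum_{\bb u\in A}Z(\bb 0,n\bb e+\bb u)$ and $\widehat Z_2=\sum_{\bb u\in A}Z(\bb v_n,\bb v_n+n\bb e+\bb u)$ have different starting points and share no common factor, so $\widehat F_1-\widehat F_2$ is essentially $\delta F(n,\xi')$ with smeared endpoints, and bounding its variance by $n^{2\eta\chi'}$ is as hard as the original problem (Definition~\ref{def3} only controls point-to-point free energies at their own scale $|\bb v|_1^{\chi'}$, and says nothing about a difference of two point-to-box free energies being a fluctuation ``at scale $n^{\eta}$''). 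The paper gets the scale $n^{\eta}$ by a different device: it restricts to paths that cross two fixed hyperplane sections, $R_1$ at distance $\lfloor n^{\eta}\rfloor$ from the start and $L_2$ at distance $\lfloor n^{\eta}\rfloor$ from the end, writes each constrained partition function as $\sum_{\bb y\in R_1,\bb y'\in L_2}Z(\cdot,\bb y)Z(\bb y,\bb y')Z(\bb y',\cdot)$, and observes that the middle factors $Z(\bb y,\bb y')$ are \emph{identical} in the two sums, so the log-ratio is bounded by $\frac1\beta\log\frac{M_1M_2}{m_1m_2}$, a quantity involving only end pieces of longitudinal length $n^{\eta}$ and transverse extent $O(n^{\xi'})$; Lemma~\ref{lem: AC} then gives the $n^{2\eta\chi'}$ term and the curvature assumption gives $n^{2(\eta-\kappa(\eta-\xi'))}=n^{2\eta(1-\kappa)+2\xi'\kappa}$. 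Without some such common-factor (ratio) structure, your ``coupling/telescoping'' has nothing to telescope.

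Two further problems. In Step~1, the claimed almost-sure bound $|\delta F-\widehat{\delta F}|\le Cn^{\eta(1-\kappa)+\xi'\kappa}$ on $B$ is not available: the endpoints $n\bb e+\bb u$ with $|\bb u|$ up to $n^{\eta}$ lie outside the cylinder $C_{n\bb e}[n^{\xi'}]$, so the event of Definition~\ref{def2} gives no control over the corresponding partition functions, and the comparison of $\widehat F_1$ with $F(\bb 0,n\bb e)$ necessarily involves random fluctuation terms (of order $n^{\chi'}$, only controlled in probability via Lemma~\ref{lem: AC}), not a deterministic curvature bound; in the paper the event $B$ is used only to force $\e=O(n^{-(1+\alpha)})$ in Lemma~\ref{lem: rvs}, and the curvature enters later through the $L^2$ bound on $\log(M_1/m_1)$, not through $\e$. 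Finally, the bookkeeping in Step~3 does not give the stated bound even granting Steps 1--2: with $\e\sim n^{\eta(1-\kappa)+\xi'\kappa}$, Lemma~\ref{lem: rvs} produces the error $\e(\|X\|_2+\|Y\|_2)\gtrsim n^{\eta(1-\kappa)+\xi'\kappa}\cdot n^{\chi'}$ (you cannot replace $\|X\|_2=\|\delta F\|_2$ by $n^{\eta\chi'}$ without circularity), and AM--GM on that cross term yields $n^{2\eta(1-\kappa)+2\xi'\kappa}+n^{2\chi'}$, not $n^{2\eta\chi'}$. The factor $\eta<1$ in the exponent $2\eta\chi'$ is precisely what the downstream argument (choosing $\chi''<\chi<\chi'$ with $2\eta\chi'<2\chi''$) needs, so losing it makes the proposition useless for Theorem~\ref{theorem1}.
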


\begin{proof}

Let $\mathcal{C}_1$ and $\mathcal{C}_2$ be as in the proof of the lower bound. Let $\tilde B$ be the convex hull of $ \mathcal{C}_1 \cup \mathcal{C}_2$. Define

\[ L_1 = \{ \bb v \in \tilde B: \bb v \cdot \bb e = 0 \}, \;  R_1 = L_1 + \lfloor n^{\eta}  \rfloor \bb e \]
and $L_2 = L_1 + (n- \lfloor n^{\eta} \rfloor) \bb e$, $R_2 = L_1 + n\bb e$.  Let $\tilde Z (\bb u, \bb v)$ be the constrained partition function  from $\bb u$ to $\bb v$ only considering paths that intersect both $R_1$ and $L_2$ and define the corresponding free energy $\tilde F(\bb u, \bb v)$. Set $$ \tilde F_1 =   \tilde F(\bb 0, n \bb e), \quad \tilde F_2 = \tilde F (\bb v_n, \bb v_n + n \bb e).$$

As  in the last section, if $B$ is the event $\{F (\bb 0, n \bb e) \geq \tilde F_1 - \frac{1}{\beta}n^{-(1+\alpha)} \text{ and } F(\bb v_n, \bb v_n + n \bb e)\geq \tilde F_2 - \frac{1}{\beta} n^{-(1+\alpha)}\}$,  Lemma \ref{lem: rvs} implies that there exists a constant $C_5$ such that 
\begin{equation}\label{eq: happy_time}
\Var~\delta  F(n,\xi') \leq\Var~ (\tilde F_1 - \tilde F_2)  + C_5\ . 
\end{equation}
Therefore it suffices to bound $\Var~ (\tilde F_1 - \tilde F_2)$, which is equal to $ \|\tilde F_1 - \tilde F_2 \|_2^2$.

To do this, let
\begin{equation}
M_i = \max_{\bb u \in L_i, \bb v  \in R_i} Z (\bb u, \bb v), \; m_i = \min_{\bb u \in L_i, \bb v  \in R_i} Z (\bb u, \bb v) \text{ for } i=1,2\ .
\end{equation}

Now,
\begin{equation}\label{eq:27}
\begin{split}
|\tilde F_1- \tilde F_2| = \bigg|-\frac{1}{\beta} \log \frac{ \tilde Z(\bb 0, n \bb e) }{\tilde Z(\bb v_n, \bb v_n + n \bb e)}\bigg| &= \frac{1}{\beta}\bigg| \log \frac{ \sum_{\bb y\in R_1, \bb y' \in L_2 } Z(\bb 0, \bb y) Z(\bb y, \bb y') Z(\bb y', n \bb e)}{\sum_{\bb y\in R_1, \bb y' \in L_2 } Z(\bb v_n, \bb y) Z(\bb y, \bb y') Z(\bb y', \bb v_n + n \bb e)}\bigg| \\
&\leq \frac{1}{\beta} \bigg|\log \frac{M_1 M_2}{m_1 m_2} \bigg|.
\end{split}
\end{equation}

\begin{lemma} \label{lem:lemma} There exists a constant $C_6$ such that for all $n$
\[ 
\E |\log M_1 - \log m_1|^2 \leq C_6 n^{2\eta \chi'} + C_6 n^{2(\eta - \kappa(\eta - \xi'))} \ .
\]
\end{lemma}

\begin{proof} Note that

\begin{equation}
\begin{split}
\E |\log M_1 - \log m_1|^2 &\leq \E\bigg( \max_{\stackrel{\bb u_1 \in L_1, \bb v_1 \in R_1}{  \bb u_2 \in L_1,\bb v_2 \in R_1}} |\log Z(\bb u_1, \bb v_1) - \log Z(\bb u_2, \bb v_2) | ^2\bigg) \\
&\leq 4 \E\bigg( \max_{\bb u_1 \in L_1,\bb v_1 \in R_1} |\log Z(\bb 0, n^{\eta}\bb e) - \log Z(\bb u_1, \bb v_1) | ^2\bigg).
\end{split}
\end{equation}

Now \begin{equation}
\max_{\bb u_1 \in L_1, \bb v_1 \in R_1} |\log Z(\bb 0, n^{\eta}\bb e) - \log Z(\bb u_1, \bb v_1) | \leq I + II
\end{equation}
where 
\begin{eqnarray*}
I &=& |\log Z(\bb 0, n^{\eta}\bb e) + \beta n^{\eta}f(\bb e)| +  \max_{\bb u_1 \in L_1, \bb v_1 \in R_1} | \log Z(\bb u_1, \bb v_1) + \beta f(\bb v_1- \bb u_1)|\  ,\\
II &=& \beta \max_{\bb u_1 \in L_1, \bb v_1 \in R_1} |f(\bb u_1 - \bb v_1)- f(n^{\eta}\bb e)| \ .
\end{eqnarray*}
To estimate the second term, note that for any $\bb u_1 \in L_1$ and $\bb v_1 \in R_1$
\begin{equation}\label{jdasd}\begin{split}
|f(\bb v_1 - \bb u_1)- f(n^{\eta}\bb e)| = n^{\eta} \left| f\left(\frac{\bb v_1- \bb u_1}{n^{\eta}} - \bb e + \bb e \right) - f(\bb e) \right| &\leq C_2 n^{\eta} \left|  \frac{\bb v_1 - \bb u_1}{n^{\eta}} - \bb e \right|^{\kappa} \\
&\leq C_7 n^{\eta - \kappa(\eta - \xi')}\ ,
\end{split}
\end{equation}
where we used the curvature assumption \eqref{eqassumption} and the fact that $\eta > \xi'$.

The estimation of $I$ follows directly from Lemma \ref{lem: AC}. Indeed, taking $ \chi_a < \hat \chi < \chi'$, it provides $\alpha>0$ such that 
\begin{equation}\label{eq:oopa}
 \sup_{\bb u_1 \in L_1, \bb v_1 \in R_1} \E \exp \bigg(\alpha  \frac{|\log Z(\bb u_1, \bb v_1) + \beta f(\bb u_1 - \bb v_1)|}{|\bb u_1 - \bb v_1|^{\hat \chi}}\bigg) < \infty.
 \end{equation}

Now note that for any $\alpha>0$ and any positive random variable $X$ one has

\begin{equation}\label{simpleineq}
\|X\|_2 \leq \frac{1}{\alpha} \log 2\E e^{\alpha X} \ .
\end{equation} 
This can be seen by Jensen's inequality as
\begin{equation}\label{eq: pizza}
e^{\alpha \|X\|_2} = 1 + \alpha \|X\|_2 + \sum_{n=2}^\infty \frac{(\alpha \|X\|_2)^n}{n!} \leq 1 + \alpha \|X\|_2 + \E \sum_{n=2}^\infty \frac{(\alpha X)^n}{n!} \leq \alpha \|X\|_2 + \E e^{\alpha X}\ .
\end{equation}
Because $e^{\alpha \|X\|_2} \geq 2 \alpha \|X\|_2$, we must have $\alpha \|X\|_2 \leq \E e^{\alpha X}$, so $e^{\alpha \|X\|_2} \leq 2 \mathbb{E} e^{\alpha X}$. Taking logarithms, we find \eqref{simpleineq}.

Applying \eqref{simpleineq} to $$X = \max_{\bb u_1 \in L_1, \bb v_1 \in R_1}   \frac{| \log Z(\bb u_1, \bb v_1) + \beta f(\bb v_1- \bb u_1)|}{|\bb v_1 - \bb u_1|^{\hat \chi}}$$
and using \eqref{eq:oopa} we obtain an upper bound for $\mathbb{E} X^2$ of
$$ \left(\frac{1}{\alpha} \log 2 \E e^{\alpha X}\right)^2 \leq \left(\frac{1}{\alpha} \log 2 \sum_{\bb u_1, \bb  v_1} \exp \left[ \alpha \E \frac{| \log Z(\bb u_1, \bb v_1) + \beta f(\bb v_1- \bb u_1)|}{|\bb v_1 - \bb u_1|^{\hat \chi}}\right]\right)^2 \leq C_8 (\log n)^2.$$
Since $|\bb v_1 - \bb u_1|^{\hat \chi} \leq C_9 n^{\eta \hat \chi}$ this immediately implies that 
\begin{equation} \label{eq:eq23}
\E I^2 \leq C_{10} n^{2\eta \chi'}.
\end{equation}
Hence, combining \eqref{eq:eq23} and \eqref{jdasd} we finish the proof of the lemma. 

\end{proof}

Going back to the proof of the Proposition, using Lemma 1 and \eqref{eq:27}  we see that since $\tilde F_1$ and $\tilde F_2$ have the same distribution 
$$ \Var (\tilde  F_1 - \tilde F_2) \leq  \frac{4}{\beta^2} (C_6 n^{2\eta \chi'} + C_6 n^{2(\eta - \kappa(\eta - \xi'))})\ .$$
Using \eqref{eq: happy_time}, this ends the proof of Proposition \ref{propup}.

\end{proof}

\subsection{Proof of $\chi \leq \kappa \xi - (\kappa -1)$}

In this section we prove one of the two inequalities for the relation \eqref{eqAD}. We first show that $\chi \geq 0$. We then split the proof into two cases depending on the value of $\chi$. The proof for $\chi>0$ will follow from the previous sections and the proof for $\chi=0$ will be essentially a rewrite of Chatterjee \cite[Section~9]{Sourav}.

\subsubsection{$\chi$ is always non-negative}

We follow the analogous proof of Chatterjee \cite[Section 3]{Sourav}. To prove that $\chi \geq 0$ it suffices to show the existence of a constant $C>0$ such that for any $\bb v \in \Z^d_{+} \setminus \{\bb 0\}$ , $\Var F(\bb 0, \bb v) \geq C.$ We proceed as follows. Assume that the edge-weights are non-degenerate. Let $E$ be the collection of edges incident to the origin. Let $c_1 < c_2$ be positive constants such that 
\begin{equation*}
\Pro(\max_{e \in E} \tau_e \leq c_1) >0 \; \text{and} \; \Pro( \min_{e \in E} \tau_e \geq  c_2 ) > 0\ .  
\end{equation*}
Define a new environment $\tau_e'$ such that $\tau_e' = \tau_e$ if $e \notin E$ and $\tau_e'$ is a independent copy of $\tau_e$ if $e \in E$. Let $F'$ be the corresponding free energy for the environment $\tau'$ and $\mathcal F$ be the sigma-algebra generated by the edges $e \notin E$. Under the event  $\max_{e \in E} \tau_e \leq c_1$ and  $\min_{e \in E} \tau_e' \geq  c_2$ one has that for all $\bb v \in \Z^d_{+} \setminus \{\bb 0\}$, $|F(\bb 0, \bb v)-F'(\bb 0, \bb v)| > c_2 - c_1>0$. Therefore $$\E \Var (F(\bb 0, \bb v) \mid  \mathcal F) = \frac{1}{2} \E \left[ \E ( |F(\bb 0, \bb v)-F'(\bb 0, \bb v)|^2 \mid \mathcal{F} )\right] > \frac{1}{2} (c_2 - c_1)^2 > 0$$
which implies that for any $\bb v \in \Z^d_{+} \setminus \{\bb 0\}$, $\Var F(\bb 0, \bb v) \geq C$ with $C = \frac{1}{2} (c_2 - c_1)^2$. \subsubsection{The case $\chi>0$}

We combine Propositions \ref{propdown} and \ref{propup}. Indeed, it follows from these propositions that for any $\eta$ satisfying $\xi' < \eta < 1$ and any $\chi'' < \chi < \chi'$ one has positive constants $C_1$, $C_2$ such that for all $n \geq 1$,

$$ C_1 n^{2 \chi''} \leq  C_2n^{2\eta(1-\kappa)+2\xi'\kappa} + C_2n^{2\eta \chi'}. $$

For any $\eta$ with $\xi'<\eta<1$, we may choose $\chi'' = \chi''(\eta)$ and $\chi'= \chi'(\eta)$ (both converging to $\chi$ as $\eta \to 1$) that are so close to $\chi$ that $2\eta\chi' < 2\chi''$. This implies that for all  $n$ large enough $\frac{C_1}{2} n^{2 \chi''} \leq  C_2 n^{2\eta(1-\kappa)+2\xi' \kappa}$.  This can only hold if $ \chi'' \leq \eta(1-\kappa)+\xi'\kappa $. Taking $\eta$ to $1$ and therefore $\chi''$ to $\chi$ we obtain 
$$ \chi \leq  \kappa \xi - (\kappa -1)\ .$$

\subsubsection{The case $\chi=0$}
In this section we prove the inequality $\chi \leq \kappa \xi - (\kappa-1)$ in the case $\chi=0$, beginning with a lemma that replaces \cite[Lemma~9.1]{Sourav}. For $M>0$, let $F^{(M)}(\bb 0, \bb x)$ be the free energy of all paths from $\bb 0$ to $\bb x$ in the constant environment, where each edge-weight equals $M$.

\begin{lemma}\label{lem: souravslemma}
Assume that $\mathbb{P}(\tau_e=L)<1$, where $L$ is the infimum of the support of the distribution of $\tau_e$ and $\mathbb{E} \tau_e^{d+\alpha}<\infty$ for some $\alpha>0$. There exists $M > L$ such that
\[
\mathbb{P}\left( F(\bb 0, \bb x) \geq F^{(M)}(\bb 0, \bb x) \text{ for all but finitely many } \bb x \in \mathbb{Z}_+^d \right) = 1\ .
\]
\end{lemma}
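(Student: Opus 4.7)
The argument is a short first-moment computation followed by Borel--Cantelli. This should be much simpler than the analogous statement in first-passage percolation \cite[Lemma~9.1]{Sourav}, because the polymer partition function is additive in paths rather than a minimum, so that the annealed calculation is both direct and sharp.

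Set $M_0 = -\tfrac{1}{\beta}\log \mathbb{E} e^{-\beta \tau_e}$. Since $\tau_e \geq L$ a.s.\ and $\mathbb{P}(\tau_e > L) > 0$ (which follows from $\mathbb{P}(\tau_e = L) < 1$), the inequality $e^{-\beta \tau_e} < e^{-\beta L}$ holds on a positive-probability event, so $\mathbb{E} e^{-\beta \tau_e} < e^{-\beta L}$ and hence $M_0 > L$. Fix any $M$ with $L < M < M_0$; this is the constant I would aim to pick.

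The annealed partition function is then explicit: since the $\tau_e$ are i.i.d.\ and each directed path from $\bb 0$ to $\bb x$ has $|\bb x|_1$ distinct edges,
\[
\mathbb{E} Z^{\beta}(\bb 0,\bb x) \;=\; N(\bb 0,\bb x)\,\bigl(\mathbb{E} e^{-\beta \tau_e}\bigr)^{|\bb x|_1} \;=\; N(\bb 0,\bb x)\, e^{-\beta M_0 |\bb x|_1},
\]
where $N(\bb 0,\bb x)$ denotes the number of directed paths. Since the constant-environment partition function equals $Z^{\beta}_{M}(\bb 0,\bb x) = N(\bb 0,\bb x) e^{-\beta M |\bb x|_1}$, the event $\{F(\bb 0,\bb x) < F^{(M)}(\bb 0,\bb x)\}$ is exactly $\{Z^{\beta}(\bb 0,\bb x) > N(\bb 0,\bb x) e^{-\beta M |\bb x|_1}\}$, and Markov's inequality gives
\[
\mathbb{P}\bigl(F(\bb 0,\bb x) < F^{(M)}(\bb 0,\bb x)\bigr) \;\leq\; \frac{N(\bb 0,\bb x)\, e^{-\beta M_0 |\bb x|_1}}{N(\bb 0,\bb x)\, e^{-\beta M |\bb x|_1}} \;=\; e^{-\beta(M_0 - M)|\bb x|_1}.
\]

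Finally, the number of $\bb x \in \mathbb{Z}_+^d$ with $|\bb x|_1 = n$ is at most $(n+1)^{d-1}$, so exponential decay dominates polynomial growth and $\sum_{\bb x \in \mathbb{Z}_+^d} \mathbb{P}(F(\bb 0,\bb x) < F^{(M)}(\bb 0,\bb x)) < \infty$. Borel--Cantelli then delivers the a.s.\ statement. No genuine obstacle arises. Two remarks worth noting: the moment hypothesis $\mathbb{E}\tau_e^{d+\alpha} < \infty$ plays no role in this lemma (it enters only through the shape theorem used downstream), and the non-degeneracy $\mathbb{P}(\tau_e = L) < 1$ is invoked solely to obtain the strict inequality $M_0 > L$, which is what allows the choice of $M \in (L, M_0)$.
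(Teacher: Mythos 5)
Your proof is correct, and it takes a genuinely different (and more elementary) route than the paper. The paper proves the deterministic inequality $\mathbb{E} F(\bb 0,\bb x) \geq F^{(M)}(\bb 0,\bb x)$ via Jensen's inequality (i.e.\ the same annealed quantity $\mathbb{E} Z^\beta = N(\bb 0,\bb x)(\mathbb{E} e^{-\beta\tau_e})^{|\bb x|_1}$ that you compute), and then upgrades this mean statement to an almost-sure one by invoking the shape theorem together with Lemma~\ref{lem:lem2ls} to absorb an $\e|\bb x|_1$ error into a slightly smaller $M$; it also treats the case $L>0$ by a separate shift $s_e = \tau_e - L$. You instead apply Markov's inequality directly to $Z^\beta(\bb 0,\bb x)$, noting that $\{F < F^{(M)}\}$ is exactly $\{Z^\beta > N e^{-\beta M|\bb x|_1}\}$, which yields the exponential bound $e^{-\beta(M_0-M)|\bb x|_1}$ and lets Borel--Cantelli finish; the strict inequality $M_0 > L$ handles general $L$ with no shift. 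Your route bypasses the shape theorem, Lemma~\ref{lem:lem2ls}, and the moment hypothesis $\mathbb{E}\tau_e^{d+\alpha}<\infty$ entirely (your closing remarks on this are accurate), at the cost of nothing — within the paper these tools are available anyway, which is presumably why the authors reused them, but your argument is self-contained, quantitative (exponentially small failure probabilities rather than a soft eventual statement), and strictly weaker in hypotheses. All the individual steps check out: each directed path from $\bb 0$ to $\bb x$ uses exactly $|\bb x|_1$ distinct edges, so the annealed factorization is exact; the $d^{-|\bb x|_1}$ normalizations cancel in the event identity; and the count of lattice points with $|\bb x|_1 = n$ is indeed polynomial in $n$.
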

\begin{proof}
Because of the shape theorem and Lemma~\ref{lem:lem2ls}, it suffices to show that for some $M>L$,
\[
\mathbb{E} F(\bb 0, \bb x) \geq F^{(M)}(\bb 0, \bb x)
\]
for all nonzero $\bb x \in \mathbb{Z}_+^d$. We do this by a computation similar to that given in the proof of Proposition~\ref{prop:pro1}, item 2. Write $N(\bb 0, \bb x)$ for the number of directed paths from $\bb 0$ to $\bb x$. We first consider the case $L=0$ and use Jensen's inequality:
\[
\mathbb{E} F(\bb 0, \bb x)  \geq -\frac{1}{\beta} \log \frac{\sum_{\gamma : \bb 0 \to \bb x} \mathbb{E} e^{-\beta \tau(\gamma)}}{d^{|\bb x|_1}} = \frac{1}{\beta}(|\bb x|_1 \log d - \log N(\bb 0, \bb x)) - \frac{|\bb x|_1}{\beta} \log \mathbb{E} e^{-\beta \tau_e}\ .
\]
On the other hand,
\[
F^{(M)}(\bb 0, \bb x) = -\frac{1}{\beta} \log \frac{e^{-\beta M|\bb x|_1} N(\bb 0, \bb x)}{d^{|\bb x|_1}} = \frac{1}{\beta}(|\bb x|_1 \log d -\log N(\bb 0, \bb x)) + M|\bb x|_1\ .
\]
So choosing $M < -\frac{1}{\beta} \log \mathbb{E} e^{-\beta \tau_e}$ (which is positive by assumption), the proof is complete.

In the case $L>0$ we define new edge-weights $(s_e)$ by $s_e = \tau_e-L$. Define $F^s(\bb 0, \bb x)$ in the same way as $F(\bb 0, \bb x)$ but for the weights $(s_e)$. By the above argument, we find $K>0$ such that 
\[
\mathbb{P}\left(F^s(\bb 0, \bb x) \geq F^{(K)}(\bb 0, \bb x) \text{ for all but finitely many } \bb x \in \mathbb{Z}_+^d\right)=1\ .
\]
But $F^s(\bb 0, \bb x) + L|\bb x|_1 = F(\bb 0, \bb x)$, so we can set $M=K+L$.
\end{proof}

%\begin{lemma}\label{lem: souravslemma}
%Let $L$ be the infimum of the support of the distribution of $\tau_e$ and assume that $\mathbb{P}(\tau_e=L) < \vec p_c(d)$, the threshold for oriented bond percolation in $d$ dimensions. There exists $M>L$ such that for all $\bb x \in \mathbb{R}^d_+$, $g(\bb x) \geq M|\bb x|_1$.
%\end{lemma}
%\begin{proof}
%The proof is the same as that of \cite[Lemma~9.1]{Sourav}. Let 
%\[
%M := \inf_{\bb x \neq \bb 0} \frac{g(\bb x)}{|\bb x|_1}\ ,
%\]
%which is positive by convexity and \cite[Theorem~6.1]{Kesten}. The infimum is attained, so choose $\bb x \neq \bb 0$ such that $g(\bb x) = M|\bb x|_1$. Define a new set of edge-weights $(s_e)$ by $s_e = \tau_e-L$ and associated first-passage times $\tau^s(\bb y, \bb z)$. Then the $s_e$'s are nonnegative and i.i.d. with $\mathbb{P}(s_e = 0) < \vec p_c(d)$. Let $g^s$ be defined for the $s_e$'s the same way $g$ was defined for the $\tau_e$'s. Since any directed path $\gamma$ from a point $\bb y$ to a point $\bb z$ must have exactly $|\bb z-\bb y|_1$ many edges, $s(\gamma) = \tau(\gamma) - L|\bb z-\bb y|_1$. Thus
%\[
%\tau^s(\bb y,\bb z) = \tau(\bb y,\bb z) - L|\bb z-\bb y|_1\ .
%\]
%Choosing $\bb y=\bb 0$ and $\bb z= n\bb x $, we find
%\[
%g^s(\bb x) = g(\bb x) - L|\bb x|_1 = (M-L) |\bb x|_1\ .
%\]
%By \cite[Theorem~6.1]{Kesten}, $g^s(\bb x) >0$ and this shows that $M>L$.
%\end{proof}

\begin{proof}[Proof of $\chi \leq \kappa \xi - (\kappa-1)$ in the case $\chi=0$.] In the rest of this section, we essentially copy \cite{Sourav} with minor changes. We will prove the inequality by contradiction. Assume that $\chi=0$ and $\kappa \xi - (\kappa-1) < \chi$. Then $\xi < (\kappa-1)/\kappa$. Choose $\xi'$ such that
\[
\xi <\xi'<(\kappa-1)/\kappa\ .
\]
Let $\delta = \delta(\xi')$ be as in the definition of $\xi_a$.

Choose $\zeta, r'$ and $r$ such that $0<r'<r<\zeta<\delta/d$ and $\zeta < \xi'$. Let $n$ be a positive integer, to be chosen large at the end of the proof. Choose any $\bb z$ with $\bb z \cdot \bb e = 0$ and $|\bb z|_1 \in (n^{\xi'},2n^{\xi'}]$. Let $\bb w = n\bb e/2 +\bb z$. Then because $\xi' < (\kappa-1)/\kappa$, there exists $C_1$ such that for all $n$,
\[
|f(\bb w) - f(n\bb e/2)| \leq C_1\ .
\]
Similarly,
\[
|f(n\bb e-\bb w) - f(n\bb e/2)| \leq C_1\ .
\]
Therefore, for all $n$,
\begin{equation}\label{eq: pizza1}
|f(n\bb e) - (f(\bb w) - f(n\bb e-\bb w))| \leq C_2\ .
\end{equation}

By Lemma~\ref{lem: AC} and the assumption that $\chi=0$, the probabilities $\mathbb{P}(|F(\bb 0,\bb w) - f(\bb w)| > n^r)$, $\mathbb{P}(|F(\bb w,n\bb e) - f(n\bb e-\bb w)| > n^r)$ and $\mathbb{P}(|F(\bb 0,n\bb e) - f(n\bb e)| > n^r)$ are all bounded by $e^{-C_3n^{r-r'}}$ for some $C_3$ depending on $r$ only. These observations, along with \eqref{eq: pizza1}, imply that there are constants $C_4$ and $C_5$, independent of our choice of $n$ such that
\begin{equation}\label{eq: pizza2}
\mathbb{P}(|F(\bb 0,n\bb e) - (F(\bb 0,\bb w) + F(\bb w,n\bb e))| > C_4 n^r) \leq e^{-C_5 n^{r-r'}}\ .
\end{equation}
By the definition of $\xi_a$, there exists $C_6$ such that
\begin{equation}\label{eq: tocontradict}
\mathbb{P}( \mu(\gamma \in C_{n\bb e}[n^{\xi'}]) > 1-e^{-\beta n^r}) \geq 1-C_6\exp(-n^\delta)\ .
\end{equation}
Let $F_0(\bb 0,n\bb e)$ be the free energy of all paths from $\bb 0$ to $n\bb e$ that stay inside of the cylinder $C_{n\bb e}[n^{\xi'}]$. Inequality \eqref{eq: tocontradict} means in particular that
\[
\mathbb{P}(F_0(\bb 0,n\bb e) - F(\bb 0,n\bb e) \leq n^r) \geq 1-C_6\exp(-n^\delta)\ .
\]

Combining this with \eqref{eq: pizza2}, we see that if $E_1$ is the event
\[
E_1 := \{|F_0(\bb 0,n\bb e) - (F(\bb 0,\bb w) + F(\bb w,n\bb e))| \leq C_7n^r\}\ ,
\]
(for $C_7 = C_4+1$) then
\begin{equation}\label{eq: pizza3}
\mathbb{P}(E_1) \geq 1-C_6e^{-n^\delta} - e^{-C_5 n^{r-r'}}\ .
\end{equation}

Let $V$ be the set of all lattice points within $\ell_1$ distance $n^\zeta$ from $\bb w$. Let $\partial V$ be the set of $\bb v \in V$ which have one neighbor outside of $V$. Write $\partial_1 V$ for the set of points $\bb v \in \partial V$ with $\bb v \leq \bb w$. Letting $L,M$ be as in Lemma~\ref{lem: souravslemma}, we have
\[
\mathbb{P}(E_2) \to 1 \text{ as } n \to \infty\ ,
\]
where $E_2$ is the event that $F(\bb v,\bb w) \geq F_M(\bb v, \bb w)$ for all $\bb v \in \partial_1 V$.

Let $E(V)$ denote the set of edges in directed paths from vertices in $\partial_1 V$ to $\bb w$. Let $(\tau_e')_{e \in E(V)}$ be a collection of i.i.d. random variables, independent of the original edge-weights, but having the same distribution. For $e \notin E(V)$ let $\tau_e' = \tau_e$. Choosing $L'$ such that $L<L'<M$, let $E_3$ be the event
\[
E_3 := \{\tau_e' \leq L' \text{ for all } e \in E(V)\}\ .
\]
If $E_3$ occurs, then for each directed path $\sigma$ from a vertex in $\partial_1 V$ to $\bb w$, $\tau'(\sigma) \leq L'n^\zeta$ and therefore $F'(\bb v, \bb w) \leq F^{(L')}(\bb v, \bb w)$, where $F'(\bb v, \bb w)$ is defined the same way as $F(\bb v, \bb w)$ but for the weights $(\tau_e')$. We can estimate
\begin{align*}
F(\bb 0, \bb w) - F'(\bb 0, \bb w) &= -\frac{1}{\beta}\log \frac{\sum_{\bb v \in \partial_1 V} e^{-\beta F(\bb 0, \bb v)}  e^{-\beta F(\bb v, \bb w)}}{\sum_{\bb v' \in \partial_1 V} e^{-\beta F'(\bb 0, \bb v')} e^{-\beta F'(\bb v', \bb w)}} \\
&= -\frac{1}{\beta} \log \frac{\sum_{\bb v \in \partial_1 V} e^{-\beta F(\bb 0, \bb v)} e^{-\beta(F(\bb v, \bb w)-F'(\bb v, \bb w))} e^{-\beta F'(\bb v, \bb w)}}{\sum_{\bb v' \in \partial_1 V} e^{-\beta F(\bb 0, \bb v')}e^{-\beta F'(\bb v', \bb w)}}\ .
\end{align*}
On the event $E_2 \cap E_3$, we have 
\[
F(\bb v, \bb w) - F'(\bb v, \bb w) \geq F(\bb v, \bb w) - F^{(M)}(\bb v, \bb w) + F^{(M)}(\bb v, \bb w) - F^{(L')}(\bb v, \bb w) \geq (M-L') n^\zeta
\]
and therefore $F(\bb 0, \bb w) - F'(\bb 0, \bb w) \geq (M-L')n^\zeta$. This means that if all of the events $E_i,$ $i=1,2,3$ occur simultaneously then
\begin{eqnarray*}
F_0(\bb 0,n\bb e) &\geq& F(\bb 0,\bb w) + F(\bb w,n\bb e) - C_7n^r \\
&\geq& F'(\bb 0,\bb w) + F'(\bb w,n\bb e) -C_7n^r + (M-L')n^\zeta\ .
\end{eqnarray*}
As $\zeta>r$, we would then have, for some $C_8$,
\[
\mu_{\bb 0,n\bb e}'(\gamma \in C_{n\bb e}[n^{\xi'}]) \leq e^{-C_8 n^\zeta}\ ,
\]
where $\mu_{\bb 0,n\bb e}'$ is the Gibbs measure for the weights $(\tau_e')$. 

Since the intersection $\cap_{i=1}^3 E_i$'s occurs with probability at least $e^{-C_9 n^{\zeta d}}$, 
\[
\mathbb{P}(\mu_{0,n\bb e}(\gamma \in C_{n\bb e}[n^{\xi'}]) \geq e^{-C_8 n^\zeta}) \leq 1-e^{-C_9 n^{\zeta d}}\ .
\]
Recalling that $\zeta d < \delta$, this contradicts \eqref{eq: tocontradict}.

\end{proof}

\section{Proof of the lower bound $\chi \geq \kappa \xi - (\kappa -1)$}

The argument below was initially given for zero temperature in the work of Newman - Piza \cite{NP}  as a rigorous version of one by Krug - Spohn and for positive temperature (but with a different definition of exponents than the ones we consider here) by Piza \cite{Piza}. It was adapted by others, including Chatterjee \cite{Sourav}, in several different models. Since this argument has appeared so many times in the literature we try to be brief in this section and leave some details to the reader.

The proof  will proceed by contradiction. Suppose that $\chi < \kappa \xi - (\kappa -1).$ Choose $\xi'$
such that
$$\frac{\chi + \kappa -1}{\kappa}  < \xi' <\xi \leq 1 \ . $$

Let $V$ be the set of all lattice points $\bb v$ in the set $C_{n\bb e} [2 n^{\xi'}] \setminus C_{n\bb e} [ n^{\xi'}] 
$ such that $\bb 0 \leq \bb v \leq n \bb e$. We first claim that there is a constant $C_1$ such that for any $\bb v \in V$ and any $n \in \N$,
\begin{equation}\label{curvaturel}
f(\bb v) + f(n\bb e- \bb v) \geq f(n \bb e) + C_1n^{\kappa \xi' - (\kappa -1)}\ .
\end{equation}
Indeed, by symmetry, we may assume that $\bb v$ has Euclidean norm at least $\frac{n}{2}$. Let $\bb w$ be the orthogonal projection of $\bb v$ onto $ \bb e$.   By convexity of $f$ we have $$f(\bb v) + f(n\bb e- \bb v) - f(n \bb e) =  f(\bb v) - f(\bb w)+ f(n\bb e- \bb v) - f(n \bb e - \bb w) \geq f(\bb v) - f(\bb w)\ ,$$ 
but also $$ f(\bb v) - f(\bb w) =  f(\bb v - \bb w + \bb w) - f(\bb w) \geq C_1n^{\kappa \xi' - (\kappa -1)}$$ by Assumption \ref{assumption1}.

Now,  take $\chi_1, \chi_2$ such that $\chi < \chi_1 <  \chi_2 < \kappa \xi' - (\kappa -1)$. Then by Lemma \ref{lem: AC}, there is a constant $C_2$ such that for $n$ large enough, the following three inequalities hold: 
\begin{eqnarray*}
\Pro\bigg( F(\bb 0, n \bb e) > nf(\bb e) + n^{\chi_2}\bigg)  &\leq& \exp \big( -C_2n^{\chi_2 - \chi_1}\big) \ , \\
\Pro \bigg(F(\bb 0,  \bb v) < f( \bb v) - n^{\chi_2}\bigg) &\leq& \exp \big( -C
_2n^{\chi_2 - \chi_1}\big) \ , \\
\Pro \bigg( F(\bb v, n \bb e) < f( n \bb e - \bb v) - n^{\chi_2}\bigg) &\leq&\exp \big( -C_2n^{\chi_2 - \chi_1}\big)\ .
\end{eqnarray*}

This combined with $\kappa \xi' - (\kappa-1) > \chi_2$  shows that for some $C_3 > 0$ if $n$ is large enough, for any $\bb v \in V$, , $$\Pro \bigg(F(\bb 0,  n \bb e) \geq F(\bb 0,  \bb v)  + F(\bb v,  n \bb e) - C_3 n^{\kappa \xi' - (\kappa-1)} \bigg) \leq 3\exp \big( -C_2n^{\chi_2 - \chi_1}\big)\ . $$
The size of $V$ is a polynomial function in $n$. This implies that there exists $C_4>0$ such that $$\Pro\bigg(F(\bb 0,  n \bb e) \geq F(\bb 0,  \bb v)  + F(\bb v,  n \bb e) - C_3n^{\kappa \xi' - (\kappa -1)}   \; \text{for some} \; \bb v \in V \bigg) \leq  \exp \big( -C_4n^{\chi_2 - \chi_1}\big)\ . $$ 
Note that this translates to $$ \Pro \bigg( \mu_{\bb 0, n\bb e} (\{\gamma : \bb v \in \gamma \}) \leq e^{-  \beta C_3 n^{\kappa \xi' - (\kappa-1)} }  \; \text{for some} \; \bb v \in V \bigg) \leq \exp(-C_4n^{\chi_2-\chi_1})\ ,$$
and therefore for some $C_5 >0$ we have
 $$ \Pro \bigg( \mu_{\bb 0, n\bb e} (\{\gamma : \bb v \in \gamma \; \text{for some} \; \bb v \in V \}) \leq e^{-  \beta C_5 n^{\kappa \xi' - (\kappa-1)} }  \;  \bigg) \leq \exp(-C_4n^{\chi_2-\chi_1})\ .$$
Now, an application of Borel-Cantelli shows that $\xi'$ is such that for all $\e >0$ $$ \Pro \bigg( \mu_{\bb 0, n\bb e} (\gamma \in C_{n\bb e}[n^{\xi'}] )> 1- \e \bigg)\rightarrow 1 $$ and this contradicts the definition of $\xi_b$.

\appendix

\section{Proof of Proposition~\ref{prop:pro1} and the Shape Theorem} \label{appendix}
In this section, we prove Propositions~\ref{prop:pro1} and~\ref{shapethm}. We start with a concentration lemma that will be used in both propositions. Let $\bb z_1, \ldots, \bb z_k, \bb z \in \mathbb{Z}^d_+$ such that
\[
\bb z_1 \leq \cdots \leq \bb z_k \leq \bb z \ .
\]
Define the free energy of all paths that pass through all $\bb z_i$'s from $\bb 0$ to $\bb z$ as
\[
F(\bb 0, \bb z; \vec{\bb z}) = F(\bb 0, \bb z_1, \ldots, \bb z_k, \bb z)\ .
\]

\begin{lemma}\label{lem:conc} Let $\vec{\bb z} = (\bb z_1, \ldots, \bb z_k)$ and $\bb z$ be as above. Assume that $\mathbb{P}(\tau_e\leq L) = 1$. For any $t >0$,
\begin{equation*}
\Pro \bigg( | F(\bb 0, \bb z; \vec{\bb z}) - \E F(\bb 0, \bb z; \vec{\bb z}) | > t \sqrt{| \bb z |_1}\bigg) \leq 2 \exp \bigg( - \frac{t^2}{2L^2}\bigg)\ .
\end{equation*} 
\end{lemma}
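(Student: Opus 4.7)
The plan is to apply the Azuma--Hoeffding inequality to a Doob martingale whose filtration exploits the directed structure of the model. The key observation is that every directed path from $\bb 0$ to $\bb z$ has exactly $K := |\bb z|_1$ edges and, if one groups edges by the coordinate-sum of their tail vertex, each such path uses exactly one edge from each group. Thus although $F(\bb 0, \bb z; \vec{\bb z})$ depends on polynomially many edge weights, there is a natural partition of these edges into only $K$ ``layers'', and it is this grouping that will produce the variance proxy of order $|\bb z|_1$ required by the statement.

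Concretely, for $k=1,\ldots,K$, I would let $\mathcal{E}_k$ denote the finite set of edges $e=(\bb u,\bb v)$ lying inside the box $\{\bb 0\le\bb v\le\bb z\}$ with $|\bb u|_1=k-1$, set $\mathcal{F}_k=\sigma\bigl(\tau_e : e \in \mathcal{E}_1\cup\cdots\cup\mathcal{E}_k\bigr)$, and define the Doob martingale $M_k=\E[F(\bb 0,\bb z;\vec{\bb z}) \mid \mathcal{F}_k]$, so that $M_0=\E F$ and $M_K=F$. The heart of the argument is the bounded-differences estimate $|M_k-M_{k-1}|\le L$ almost surely. By the usual resampling trick, if $(\tilde{\tau}_e)_{e\in\mathcal{E}_k}$ is an independent copy of the layer-$k$ weights and $\tilde F$ is the free energy obtained by replacing $(\tau_e)_{e\in\mathcal{E}_k}$ with $(\tilde{\tau}_e)_{e\in\mathcal{E}_k}$, then $M_k-M_{k-1}=\E[F-\tilde F\mid\mathcal{F}_k]$, so it suffices to show $|F-\tilde F|\le L$. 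Since each directed path $\gamma$ from $\bb 0$ to $\bb z$ through $\vec{\bb z}$ traverses exactly one edge $e_\gamma\in\mathcal{E}_k$, denoting by $Z,\tilde Z$ the respective constrained partition functions and by $\mu$ the corresponding Gibbs measure,
\[
\frac{\tilde Z}{Z} \;=\; \sum_\gamma \mu(\gamma)\,\exp\!\bigl(-\beta(\tilde{\tau}_{e_\gamma}-\tau_{e_\gamma})\bigr) \;\in\; \bigl[e^{-\beta L},\,e^{\beta L}\bigr],
\]
from which $|F-\tilde F|\le L$ is immediate.

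Once the increments are bounded by $L$, the classical Azuma--Hoeffding inequality applied to $(M_k)_{k=0}^{K}$ gives $\Pro(|F-\E F|>s)\le 2\exp\bigl(-s^2/(2KL^2)\bigr)$, and substituting $s=t\sqrt{K}$ yields the claimed bound. The main conceptual obstacle to anticipate is identifying the correct filtration: a naive edge-by-edge Doob decomposition would produce polynomially many martingale steps each contributing $L^2$, giving concentration on a much larger scale than $\sqrt{|\bb z|_1}$. It is the grouping of edges by layer that reduces the effective number of increments to exactly $|\bb z|_1$, and this reduction is a genuine consequence of the directed structure: every path uses one and only one edge per layer, so a simultaneous resampling of all weights within a layer shifts each path's energy by at most $L$. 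The intermediate passage constraints $\vec{\bb z}$ do not affect this reasoning, since paths through $\vec{\bb z}$ still have exactly one edge per layer.
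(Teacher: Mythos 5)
Your proof is correct and follows essentially the same route as the paper: a Doob martingale with respect to the filtration of edge weights revealed layer by layer in increasing $\ell^1$-norm, the bounded-difference bound $|M_k - M_{k-1}|\le L$ obtained by resampling one layer and using that every directed path (including those constrained to pass through $\vec{\bb z}$) uses exactly one edge per layer, and then Azuma--Hoeffding with $|\bb z|_1$ increments. Your Gibbs-measure computation of the ratio $\tilde Z/Z$ is just a slightly more explicit version of the paper's one-line bound, so there is nothing to add.
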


\begin{proof} Let $\mathcal F_0$ denote the trivial sigma-algebra and $\mathcal F_j,~j \geq 1$ be the sigma-algebra generated by the weights $\tau_e$ such that both endpoints of $e$ have $\ell^1$ norm no bigger than $j$. To prove the lemma, we will write $F (\bb 0, \bb z; \vec{\bb z}) - \E F(\bb 0, \bb z; \vec{\bb z})$ as a sum of $| \bb z |_1$ martingale differences:
\[
F(\bb 0, \bb z; \vec{\bb z}) - \E F(\bb 0, \bb z; \vec{\bb z}) =  \sum_{j=1}^{| \bb z |_1} D_{j} -D_{j-1},~ \text{where } D_j = \E \big( F(\bb 0, \bb z; \vec{\bb z}) \mid  \mathcal F_j \big)\ .
\]
For a fixed $j$, write $F[\tau^{(1)},\tau^{(2)},\tau^{(3)}]$ for $F(\bb 0, \bb z; \vec{\bb z})$ as a function of the edge weights for edges with both endpoints of $\ell^1$-norm no bigger than $j$ ($\tau^{(1)}$), strictly bigger than $j$ ($\tau^{(3)}$) and all other edges ($\tau^{(2)}$). The bound on the edge weights implies that if $(\tau_e)$ and $(\tilde \tau_e^{(2)})$ are sampled independently from $\mathbb{P}$ then
\[
|F[\tau^{(1)},\tau^{(2)},\tau^{(3)}] - F[\tau^{(1)},\tilde \tau^{(2)}, \tau^{(3)}] | \leq L \quad \mathbb{P}\text{-almost surely}\ .
\]
Therefore a calculation gives
\[
|D_{j+1} - D_{j}| \leq L \text{ for all } j\ .
\] 
By the Azuma-Hoeffding inequality \cite{Azuma},
\begin{equation}
\Pro \bigg( | F(\bb 0, \bb z; \vec{\bb z}) - \E F(\bb 0, \bb z; \vec{\bb z}) | > s\bigg) \leq 2\exp\bigg(\frac{-s^2}{2|\bb z|_1L^2}\bigg)\ .
\end{equation}
The lemma follows by taking $s= t \sqrt{| \bb z |_1}$.   
\end{proof}

\subsection{Proof of Proposition~\ref{prop:pro1}}

We will first prove existence of $f$ and then we will prove properties (2)-(5).

\subsubsection{Existence of the limit}

As usual, the $L^1$ and almost sure convergence (to a finite limit) of $\lim_{n \to \infty} \frac{1}{n} F(\bb 0, n \bb x)$ for $\bb x \in \mathbb{Z}^d_+$ follows from Kingman's subadditive ergodic theorem. Because the model is not invariant under non-integer translations, to apply the same theorem with $\bb x \in \mathbb{R}^d_+$ we have to enlarge the space, as in \cite{Hoffman}. 

Let $\widetilde \Omega = [-1/2,1/2)^d \times \Omega$ and define a probability measure $\mathbb{\bar P}$ on this space as $m \times \mathbb{P}$, where $m$ is Lebesgue measure. We write a typical configuration in $\widetilde \Omega$ as $\widetilde \omega = (\bb r, \omega)$. For any $\bb y \in \mathbb{R}^d_+$, define the translation operator $\widetilde{T}_{\bb y}$ on $\widetilde \Omega$ in the following manner. If $\bb z \in \mathbb{R}^d_+$ then write $\overline{\bb z}$ for $\bb z - [\bb z]$. Then $\widetilde{T}_{\bb y}$ is defined as
\[
\widetilde{T}_{\bb y}(\bb r, \omega) = (\overline{\bb r + \bb y}, T_{[\bb r + \bb y]}\omega)\ ,
\]
$T_{[\bb r + \bb y]} \omega$ is the translation of $\omega$ by vertex $[\bb r + \bb y]$. Note that $\mathbb{\bar P}$ is invariant under $\widetilde{T}_{\bb y}$ (but not necessarily ergodic). Last, we define the free energy between vertices $\bb u$ and $\bb v$ in $(\bb r, \omega)$ as
\[
F(\bb u, \bb v)(\bb r, \omega) = F(\bb u, \bb v)(\omega)\ .
\]

Now that we set up the enlarged space, we briefly note that to show the existence of 
\begin{equation}\label{eq: existence}
f(\bb x) = \lim_{n \to \infty}(1/n) F(\bb 0, n \bb x)
\end{equation}
almost surely and in $L^1$, we may assume that all coordinates of $\bb x$ are strictly positive. Otherwise $\bb x$ is contained in a lower dimensional subspace of $\mathbb{R}^d$ and all directed paths from $\bb 0$ to $\bb x$ must stay in this subspace. By permutation invariance of the coordinates we could then assume the first $k$ coordinates of $\bb x$ are the nonzero ones and argue for the existence of $f$ as below in the space $\mathbb{R}^k_+$.

So fix $\bb x = (x_1, \ldots, x_d)$ with all coordinates nonzero and apply Kingman's subadditive ergodic theorem to the double sequence of variables $(F_{m,n})_{m \leq n}$ (for each ergodic component of the measure $\mathbb{\bar P}$) defined by
\[
F_{m,n}(\widetilde \omega) = F(m \bb x, n \bb x)(\widetilde \omega)\ .
\]
This provides the existence of the limit
\begin{equation}\label{eq: biggerconvergence}
(1/n) F(\bb 0, n \bb x) \to f(\bb x)(\widetilde \omega) <\infty \quad \mathbb{\bar P}\text{-almost surely} 
\end{equation}
and in $L^1(\mathbb{\bar P})$. We are left to argue that this implies convergence under the original measure and that this limit is almost surely constant.

We first address almost sure convergence. Equation \eqref{eq: biggerconvergence} means that if we select a point $\bb r$ uniformly at random in $[-1/2,1/2)^d$, then with probability one, $(1/n) F(\bb r, \bb r + n \bb x)$ converges for almost all $\omega$. Fix some such $\bb r$ and call this limit $f(\bb x)$. Because it does not depend on any finite number of edge weights, $f(\bb x)$ is constant $\mathbb{P}$-almost surely. Now write
\begin{eqnarray*}
|(1/n) F(\bb 0, n \bb x) - f(\bb x)| &\leq& |(1/n) F(\bb r, \bb r+ n \bb x) - f(\bb x)| \\
&+& (1/n) |F(\bb 0, n \bb x) - F(\bb r, n \bb x)| + (1/n) | F(\bb r, n \bb x) - F(\bb r, \bb r + n \bb x)| \ .
\end{eqnarray*}
By definition, $F(\bb r, n \bb x) = F(\bb 0, n \bb x)$, so we are left to show
\begin{equation}\label{eq: needtobound}
(1/n) |F(\bb 0, n \bb x) - F(\bb 0, \bb r + n \bb x)| \to 0 \text{ almost surely}\ .
\end{equation}

By the positivity of the $x_i$'s, fix $k\geq 1$ such that $(1/k) \leq \min_j x_j$. For such a choice, $$-1/2 + n x_j \geq 1/2 + (n-k)x_j$$ for all $j$ and $n \geq k$ and therefore
\[
\bb r + (n-k) \bb x \leq \bb n \bb x \leq \bb r + (n+k) \bb x \text{ for } n \geq k\ .
\]
By subadditivity,
\[
F(\bb 0, \bb r + (n+k) \bb x) -  F(n \bb x, \bb r + (n+k) \bb x) \leq F(\bb 0, n \bb x) \leq  F(\bb 0, \bb r + (n-k)\bb x) + F(\bb r + (n-k) \bb x, n \bb x)\ .
\]
Because $(1/n) F(\bb 0, \bb r + (n+k) \bb x)$ and $(1/n) F(\bb 0, \bb r + (n-k) \bb x)$ converge to the same number, we need then to show that
\[
(1/n) F(\bb r + (n-k) \bb x, n \bb x) \text{ and } (1/n) F(n \bb x, \bb r + (n+k)\bb x) \text{ converge to } 0\ .
\]
Translating both terms back by $(n-k) \bb x$ it suffices to show that for each $\e>0$ and $R>0$,
\[
\sum_n \mathbb{P}\left( \sup_{\stackrel{\bb t \leq \bb s}{|\bb t|_1, |\bb s |_1 \leq R}} F(\bb t, \bb s) \geq \e n\right) < \infty\ ,
\]
which follows because the supremum inside has finite mean. This proves almost sure existence of the limit \eqref{eq: existence}.

To show $L^1$ convergence, let \begin{equation}\label{eq:lpp}T(\bb 0, \bb x) = \max_{\gamma : \bb 0 \to \bb x} \tau(\gamma)\end{equation} and note the inequality
\[
0 \leq F(\bb 0, n \bb x) \leq T(\bb 0, n \bb x) + (n/\beta) |\bb x |_1 \log d\ .
\]
Because $(1/n) T(\bb 0, n \bb x)$ converges almost surely and in $L^1$ (see \cite[Proposition~2.1]{JMartin03}), the dominated convergence theorem finishes the proof.

\subsubsection{Properties of $f$}

We prove now that $f$ has the properties of Proposition~\ref{prop:pro1}. For item (2), the assumption $\mathbb{P}(\tau_e=0) < 1$ implies that $\mathbb{E} e^{-\beta \tau_e} <1$. So let $\bb x \in \mathbb{R}_+^d \setminus \{\bb 0\}$ and fix a directed path $\sigma : \bb 0 \to [\bb x]$:
\[
\mathbb{E} F(\bb 0, \bb x) = -\frac{1}{\beta} \mathbb{E} \log \frac{\sum_{\gamma : \bb 0 \to [\bb x]} \exp(-\beta \tau(\gamma))}{d^{|[\bb x]|_1}}  \geq -\frac{1}{\beta} \log \mathbb{E} e^{-\beta \tau(\sigma)} = -\frac{1}{\beta} \log \left( \mathbb{E} e^{-\beta \tau_e} \right)^{|[\bb x]|_1}\ .
\]
Here we have used Jensen's inequality with the logarithm. This implies
\[
f(\bb x) = \lim_{n \to \infty} \frac{1}{n} \mathbb{E} F(\bb 0, n \bb x) \geq -\frac{|\bb x|_1}{\beta} \log \mathbb{E} e^{-\beta \tau_e}\ ,
\]
giving \eqref{eq: infbound}.

Next, if $\lambda > 0$ then
\[
f(\lambda \bb x) = \lim_{n \to \infty} \frac{F(\bb 0, n \lambda \bb x)}{n} = \lambda \lim_{n \to \infty} \frac{F(\bb 0, n\lambda \bb x)}{n \lambda} = \lambda f( \bb x)\ ,
\]
proving item (3). (Here we have used that the convergence $(1/n)F(0,n\bb x)$ occurs over real $n$ going to infinity, which is a slight extension of part (1).) Items (4) and (5) follow immediately from the facts that $f$ is deterministic and $F$ is subadditive. This implies convexity of $f$: if $\bb x, \bb y \in \mathbb{R}^d_+$ and $\lambda \in [0,1]$,
\[
f(\lambda \bb x + (1-\lambda) \bb y) \leq \lambda f(\bb x) + (1-\lambda) f(\bb y)
\]
and therefore $f$ is continuous except possibly at the boundary of $\mathbb{R}^d_+$.

For the remainder of the section we prove continuity at the boundary using a direct adaptation of the arguments of \cite{JMartin03}. The strategy of the proof is to first consider the case where the weights are bounded and then use a truncation argument. The next lemma is the analogue of Lemma~3.2 in \cite{JMartin03}. 

\begin{lemma}\label{lem:lem127}Suppose  $\Pro(\tau_e \leq L) = 1$. Let $R>0$ and $\e>0$. There exists $\delta>0$ such that if $|\bb x| \leq R$ and $x_j = 0$ (where $1 \leq j \leq d$), then for all $0 \leq h \leq \delta$,
\[
|f(\bb x+h\bb e_j) - f(\bb x)|<\e\ .
\]
\end{lemma}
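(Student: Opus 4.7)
The plan is to establish a pointwise two-sided comparison between the partition functions at $[\bb x]+k\bb e_j$ and $[\bb x]$ for integer $k$, via a path-level projection, and then pass to the scaling limit. Fix an integer $\bb y\in\mathbb Z_+^d$ with $y_j=0$ and an integer $k\geq 0$. Every directed path $\gamma:\bb 0\to \bb y+k\bb e_j$ uses exactly $k$ edges in direction $\bb e_j$ and $y_\ell$ edges in each other direction $\bb e_\ell$; deleting the $k$ $\bb e_j$-edges defines a projection $\pi(\gamma)=\gamma_0:\bb 0\to\bb y$, whose fiber above any fixed $\gamma_0$ has cardinality exactly $\tbinom{|\bb y|_1+k}{k}$ (one element for each way to interleave $k$ additional $\bb e_j$-steps among the $|\bb y|_1$ edges of $\gamma_0$). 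Using $0\leq\tau_e\leq L$, the inserted edges contribute total weight in $[0,kL]$, so $\tau(\gamma_0)\leq\tau(\gamma)\leq\tau(\gamma_0)+kL$ pointwise in $\omega$. Summing $e^{-\beta\tau(\gamma)}$ fiber-by-fiber yields the deterministic sandwich
\[
\tbinom{|\bb y|_1+k}{k}e^{-\beta kL}\,Z^\beta(\bb 0,\bb y)\;\leq\; Z^\beta(\bb 0,\bb y+k\bb e_j)\;\leq\; \tbinom{|\bb y|_1+k}{k}\,Z^\beta(\bb 0,\bb y),
\]
which, after dividing by $d^{|\bb y|_1+k}$ and taking $-(1/\beta)\log$, rearranges to
\[
\left|F(\bb 0,\bb y+k\bb e_j)-F(\bb 0,\bb y)-\tfrac{1}{\beta}\bigl(k\log d-\log\tbinom{|\bb y|_1+k}{k}\bigr)\right|\leq kL.
\]

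Next I will apply this with $\bb y=[n\bb x]$ (so $y_j=0$) and $k=[nh]$, divide by $n$, and send $n\to\infty$. Proposition~\ref{prop:pro1}(1) gives $n^{-1}F(\bb 0,n\bb x)\to f(\bb x)$ and $n^{-1}F(\bb 0,n(\bb x+h\bb e_j))\to f(\bb x+h\bb e_j)$; since by definition these equal $n^{-1}F(\bb 0,[n\bb x])$ and $n^{-1}F(\bb 0,[n\bb x]+[nh]\bb e_j)$ respectively, the displayed inequality passes to the limit. Stirling's formula gives
\[
\tfrac{1}{n}\log\tbinom{|[n\bb x]|_1+[nh]}{[nh]}\;\longrightarrow\;\phi(|\bb x|_1,h):=(|\bb x|_1+h)\log(|\bb x|_1+h)-|\bb x|_1\log|\bb x|_1-h\log h
\]
(with the convention $0\log 0=0$), so combining produces the deterministic estimate
\[
\bigl|f(\bb x+h\bb e_j)-f(\bb x)\bigr|\leq hL+\bigl|\Psi(|\bb x|_1,h)\bigr|,\qquad \Psi(a,h):=\tfrac{1}{\beta}\bigl[h\log d-\phi(a,h)\bigr].
\]

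To conclude, I will verify that $hL+|\Psi(a,h)|\to 0$ as $h\to 0^+$ uniformly for $a\in[0,\sqrt d\,R]$. The function $\phi$ is continuous on $[0,\infty)^2$ (each term extends continuously by zero on the coordinate axes) and satisfies $\phi(a,0)=0$ identically; restricted to the compact rectangle $[0,\sqrt d\,R]\times[0,1]$ it is uniformly continuous, so $\phi(a,h)\to 0$ uniformly in $a$ as $h\to 0$. The analogous statement for $h\log d$ and $hL$ is trivial. Choosing $\delta=\delta(R,\e)$ to make $hL+|\Psi(a,h)|<\e$ uniformly for $a\in[0,\sqrt d\,R]$ and $0\leq h\leq\delta$ completes the argument.

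The main technical hurdle I anticipate is arriving at the right two-sided comparison: a naive ``append $k$ extra $\bb e_j$-steps to each path to $\bb y$'' construction produces only the one-sided estimate $f(\bb x+h\bb e_j)\leq f(\bb x)+h(L+\log d/\beta)$ (via subadditivity), and the polymer analogue of the LPP monotonicity argument of \cite{JMartin03} is unavailable because $f$ is genuinely non-monotone in our setting. The fiber-counting refinement above works precisely because the fiber size $\tbinom{|\bb y|_1+k}{k}$ is uniform over $\gamma_0$, giving a \emph{two-sided} pointwise comparison whose entropic error $\phi(|\bb x|_1,h)$ vanishes in the limit $h\to 0$.
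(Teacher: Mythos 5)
The central step of your argument --- the ``deterministic sandwich'' --- is false, and the error is in the sentence ``the inserted edges contribute total weight in $[0,kL]$, so $\tau(\gamma_0)\leq\tau(\gamma)\leq\tau(\gamma_0)+kL$ pointwise in $\omega$.'' Deleting the $k$ steps in direction $\bb e_j$ from $\gamma$ does give a word that encodes a directed path $\gamma_0:\bb 0\to\bb y$, and the fiber count $\binom{|\bb y|_1+k}{k}$ is correct; but the remaining (non-$j$) edges of $\gamma$ are \emph{not} the edges of $\gamma_0$ --- they are translates of them in the $\bb e_j$ direction, carrying independent weights. Concretely, take $d=2$, $j=2$, $\bb y=(1,0)$, $k=1$: the path ``up, then right'' projects to the single horizontal edge $e_1$ from $(0,0)$ to $(1,0)$, but it never uses $e_1$; if $\tau_{e_1}=L$ and all other weights vanish, then $\tau(\gamma)=0<\tau(\gamma_0)=L$ and your upper bound $Z^\beta(\bb 0,\bb y+\bb e_2)\leq\binom{2}{1}Z^\beta(\bb 0,\bb y)$ reads $1+e^{-\beta L}\leq 2e^{-\beta L}$, which fails for large $L$; a symmetric choice of weights breaks the lower bound as well. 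So neither inequality holds pointwise, and consequently the limit relation you derive from it is unsupported. Note also that the inequality you do get honestly (append $k$ extra $\bb e_j$-steps, use subadditivity) is exactly the easy direction $f(\bb x+h\bb e_j)-f(\bb x)\leq hL$; it is the opposite direction, $f(\bb x)\leq f(\bb x+h\bb e_j)+\e$, that cannot be made deterministic.

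This is precisely why the paper's proof (adapted from Martin's last-passage argument) is probabilistic rather than pathwise: one decomposes each path to $[n(h,\bb x)]$ according to the points $\bb m_0\leq\cdots\leq\bb m_{[nh]+1}$ where it crosses the hyperplanes of constant $j$-th coordinate, notes that the number of such crossing sequences is $\exp[n\phi(h,\bb x)+o(n)]$ by Stirling (your entropy function $\phi$ reappears here), bounds $\E\sum_i\bar F(\bb m_i,\bb m_{i+1})\geq nf(0,\bb x)$ by subadditivity \emph{in expectation}, and then uses the Azuma--Hoeffding concentration of Lemma~\ref{lem:conc} together with a union bound over the $\exp[n\phi+o(n)]$ sequences and Borel--Cantelli. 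The quantitative heart of the lemma is that the Gaussian concentration rate beats the entropy $\phi(h,\bb x)$ once $h$ is small (uniformly over $|\bb x|\leq R$); your proposal tries to bypass this competition with a pointwise comparison that the random environment does not permit. To repair your write-up you would have to replace the sandwich by this concentration-versus-entropy argument, at which point you have essentially reconstructed the paper's proof.
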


\begin{proof} By symmetry, we may take $j = 1$. We write a general vector in $\mathbb{R}^d_+$ as $(x,\bb x)$ where $x \in \mathbb{R}_+$ and $\bb x \in \mathbb{R}^{d-1}_+$.  We need to show that given $R>0$, for $\bb x=(x_2 ,x_3, \ldots ,x_d) \in \mathbb{R}^{d-1}_+$,
\[
f(h,\bb x) \rightarrow f(0,\bb x), \quad \text{as} \quad h \rightarrow 0^+\ ,
\]
uniformly in $\{ \bb x: |\bb x | \leq R\}$.

Let $\bb x$ and $h > 0$ be as above and $n \in \N$. A path from $\bb 0$ to the point $[n(h, \bb x)] $ contains exactly  $[nh]$  steps which increase the first coordinate, so can be decomposed into a concatenation of paths from $(r, \bb m_r)$ to $(r, \bb m_{r+1})$, $r = 0,1,2,\ldots, [nh]$, where $\bb m_r  \in \Z^{d-1}_+$ for each $r$ and 
\begin{equation}\label{eq:eq31}	
0= \bb m_0 \leq \bb m_1 \leq \cdots \leq \bb m_{[nh]+1} =  [n \bb x] \ .
\end{equation}

As noted in \cite{JMartin03}, the number of  the choices for the $\bb m_r$ satisfying the above equation is
\[
\prod_{i=2}^{d}\binom{[nx_i] + [nh]}{[nh]}\ .
\]
By StirlingÕs formula, this is $\exp[n \phi (h, \bb x) + o(n)]$, where 
\[
\phi(h,\bb x)=	\sum_{\stackrel{2\leq i \leq d}{x_i >0}} \bigg( h \log \frac{h+x_i}{h} + x_i \log \frac{x_i +h}{x_i} \bigg)\ .
\]
For each $0\leq i \leq  [nh]$  define $\bar F(\bb m_i, \bb m_{i+1})$ as the free energy of all paths joining $(i, \bb m_i)$ and $(i+1, \bb m_{i+1})$. We trivially have
\[
F (\bb 0, n(h, \bb x)) = -\frac{1}{\beta} \log \sum_{ \bb m_0, \bb m_1, \cdots, \bb m_{[nh] +1} } \bigg[ \prod_{i} \exp(-\beta \bar F(\bb m_i, \bb m_{i+1}))\bigg] \ . 
\]

 For fixed $\vec{\bb m} = \{\bb m_r \}$, by subadditivity and the definition of $f$
\begin{equation}\label{eq:eq33}
\E \sum_{i=0}^{[nh]} \bar F(\bb m_i, \bb m_{i+1}) \geq \E F(\bb 0,  n (0,\bb x)) \geq n f(0,\bb x)\ .
\end{equation}
We can now apply Lemma~\ref{lem:conc} to obtain the existence of $C_1>0$ such that for any $a >0$ 
\begin{equation*}
\Pro \left[\left|\sum_{i=0}^{[nh]}  \bar F(\bb m_i, \bb m_{i+1}) - \mathbb{E} \sum_{i=0}^{[nh]}  \bar F(\bb m_i, \bb m_{i+1})\right| \geq n a \right] \leq 2 \exp \left( -C_1 \frac{na^2}{L^2}\right).
\end{equation*}
Because $\phi(h,\bb x)$ tends to 0 uniformly in $|\bb x| \leq R$ as $h$ goes to zero, we can choose $\delta$ such that if $0 \leq h < \delta$ then
\[
\phi(h,\bb x) \leq \min \left\{ \frac{\beta \e}{2}, \frac{C_1\e^2}{18L^2} \right\}\ .
\]

Now, taking the sum over all possible $\vec{\bb m}$'s,
\begin{align*}
\Pro &\bigg[ F(\bb 0, n(h, \bb x)) \leq n f(0,\bb x) - n \e \bigg] \\
& \leq~ \exp(n \phi(h, \bb x) + o(n)) \max_{\vec{m}} \mathbb{P}\left( \sum_i \bar F(\bb{m}_i, \bb{m}_{i+1}) \leq nf(0,x)-n\e + \frac{n}{\beta}\phi(h,\bb x) + o(n)\right) \\
& \leq ~ \exp(n \phi(h, \bb x) + o(n)) \max_{\vec{m}} \mathbb{P}\left( \sum_i \bar F(\bb{m}_i, \bb{m}_{i+1}) - \sum_i \mathbb{E} \bar F(\bb{m}_i, \bb{m}_{i+1}) \leq -n\e/2 + o(n) \right)\ .
\end{align*}
For $n$ large, so that $o(n) \leq n\e/6$, we can apply the concentration inequality with $a = \epsilon/3$ to get an upper bound of
\[
2\exp(n \phi(h,\bb x)+o(n)) \exp\left( -C_1 \frac{n \e^2}{9L^2} \right)\ .
\]
By the choice of $h$, this is summable and therefore we can apply Borel-Cantelli to obtain
\[
f(0, \bb x) - f(h,\bb x) \leq \e\ .
\]

In the other direction, subadditivity implies 
\[
f(h, \bb x) - f(0,\bb x) \leq f(h, \bb 0) = h \E \tau_e \leq h L\ ,
\] 
which also tends to zero as $h$ goes to zero, uniformly over all $\bb x$.
\end{proof}

Now that we have established Lemma \ref{lem:lem127}, continuity of $f$ at the boundary of $\mathbb{R}^d_+$ follows immediately from the argument of \cite[Lemma~3.3]{JMartin03}.

\begin{lemma}Suppose  $\Pro(\tau_e \leq L) = 1$. Then $f$ is continuous on $\mathbb R^{d}_{+}$.

\end{lemma}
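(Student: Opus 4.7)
The plan is to combine the interior continuity already proved (from convexity of $f$ together with subadditivity and translation invariance) with the one-dimensional boundary estimate in Lemma \ref{lem:lem127}, following the argument of \cite[Lemma 3.3]{JMartin03}. Fix $\bb x_0 \in \mathbb R^d_+$. If all coordinates of $\bb x_0$ are strictly positive, then $\bb x_0$ lies in the interior of $\mathbb R^d_+$ and the convex function $f$ is automatically continuous there, so we only need to treat the case when the zero set $I := \{i : (\bb x_0)_i = 0\}$ is nonempty. Set $R := |\bb x_0| + 1$ and, given $\e > 0$, use Lemma \ref{lem:lem127} to produce $\delta > 0$ such that $|f(\bb x + h \bb e_j) - f(\bb x)| < \e/(2d)$ whenever $|\bb x| \leq R$, $x_j = 0$, and $0 \leq h \leq \delta$.

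For any $\bb y \in \mathbb R^d_+$ close enough to $\bb x_0$, decompose $\bb y = \bb y_F + \bb h$, where $\bb y_F$ is the projection of $\bb y$ onto the face $\{z : z_i = 0 \text{ for all } i \in I\}$ and $\bb h = \sum_{i \in I} y_i \bb e_i$ lives in the complementary subspace. The triangle inequality gives
\[
|f(\bb y) - f(\bb x_0)| \leq |f(\bb y) - f(\bb y_F)| + |f(\bb y_F) - f(\bb x_0)|,
\]
and we estimate the two terms separately.

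For the first term, enumerate $I = \{i_1, \dots, i_k\}$ and set $\bb w_0 := \bb y_F$, $\bb w_j := \bb w_{j-1} + y_{i_j} \bb e_{i_j}$, so that $\bb w_k = \bb y$. At each step the base point $\bb w_{j-1}$ has a zero in coordinate $i_j$ (because we have only previously touched coordinates $i_1, \dots, i_{j-1}$), so Lemma \ref{lem:lem127} applies provided $|\bb w_{j-1}| \leq R$ and $y_{i_j} \leq \delta$, both of which hold once $\bb y$ is sufficiently close to $\bb x_0$. Telescoping yields $|f(\bb y) - f(\bb y_F)| \leq k \cdot \e/(2d) \leq \e/2$. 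For the second term, note that any directed path between two points lying in the face $\{z_i = 0 : i \in I\}$ is itself confined to this face, so the restriction of $f$ to the face is precisely the limiting free energy of the analogous polymer model in dimension $d - |I|$. By induction on the dimension (the one-dimensional case being immediate from convexity), this restricted $f$ is continuous on its own domain $\mathbb R_+^{I^c}$, which supplies $|f(\bb y_F) - f(\bb x_0)| < \e/2$ for $\bb y_F$ sufficiently close to $\bb x_0$.

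The main obstacle is the bookkeeping for the iteration: each application of Lemma \ref{lem:lem127} requires a zero coordinate in the slot being perturbed, so one must move \emph{one coordinate at a time}, starting from the face, rather than perturbing all of $\bb h$ at once. Once the coordinates are processed in order, each intermediate point has norm at most $R$ and each increment is at most $\delta$, so the uniform estimate of Lemma \ref{lem:lem127} applies at every step and the total error stays below $\e/2$. Combined with the inductive continuity on the face, this completes the proof.
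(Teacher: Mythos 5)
Your proof is correct and follows essentially the same route as the paper, which simply invokes the argument of Martin's Lemma~3.3 (interior continuity from convexity, the one-coordinate boundary estimate of Lemma~\ref{lem:lem127} applied one zero coordinate at a time, and induction on dimension via the face restriction). One harmless inaccuracy: because of the $d^{-|\bb x|_1}$ normalization in \eqref{eq:freeenergy}, the restriction of $f$ to the face $\{z_i=0,\ i\in I\}$ is not precisely the $(d-|I|)$-dimensional limiting free energy but differs from it by the continuous linear term $\beta^{-1}\log\bigl(d/(d-|I|)\bigr)\,|\cdot|_1$, which does not affect the continuity you need.
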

\begin{proof} 
The proof is identical to that of \cite[Lemma~3.3]{JMartin03}, replacing each instance of the word ``concave'' by ``convex."
\end{proof}

The next step is to show that one can remove the truncation and finally prove item~(5). For general weights $\tau_e$ we define the truncated ones $\tau_e^L = \min\{ \tau_e, L\}$. There is a corresponding free energy $F_L(\bb u, \bb v)$ for $\bb u \leq \bb v$ in $\mathbb{R}^d_+$ and limiting free energy $f_L(\bb u)$. Clearly
\[
F_L(\bb u, \bb v) \leq F(\bb u, \bb v) \text{ and so } f_L(\bb u) \leq f(\bb u)\ .
\]
The first part of the lemma says that $f_L \to f$ uniformly on compact subsets of $\mathbb{R}^d_+$, implying continuity for $f$. The second and third parts will be used later in the shape theorem.

\begin{lemma}[Truncation Lemma]\label{truncation}
Suppose that $\mathbb{E} \tau_e^{d+\alpha} < \infty$ for some $\alpha>0$. 
\begin{enumerate}
\item Given $R>0$ and $\e>0$ there exists $L$ such that
\[
\sup_{\stackrel{\bb u \in \mathbb{R}^d_+}{|\bb u|_1 \leq R}} \left( f(\bb u) - f_L(\bb u) \right) \leq \e\ .
\]
\item Given $\e>0$ there exists $L$ such that
\[
\mathbb{P}( F(\bb 0, \bb z) \leq F_L(\bb 0, \bb z) + \e|\bb z|_1 \text{ for all but finitely many } \bb z \in \mathbb{Z}^d_+ ) = 1\ .
\]
\item Given $\e>0$ there exists $L$ such that
\[
\mathbb{E} F(\bb 0, \bb z) \leq \mathbb{E} F_L(\bb 0, \bb z) + \e |\bb z|_1 \text{ for all } \bb z \in \mathbb{Z}^d_+\ .
\]
\end{enumerate}
\end{lemma}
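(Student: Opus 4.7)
The backbone is a single deterministic pointwise bound. Writing $\mu = \mu^\beta_{\bb 0,\bb z}$ for the Gibbs measure of the original environment and noting that $\tau_e - \tau_e^L = (\tau_e-L)^+\geq 0$, one has
\[
\frac{Z_L(\bb 0,\bb z)}{Z(\bb 0,\bb z)} \;=\; \sum_\gamma \mu(\gamma)\,e^{\beta\sum_{e\in\gamma}(\tau_e-L)^+} \;\leq\; \exp\Paren{\beta T^L(\bb 0,\bb z)},
\]
where
\[
T^L(\bb 0,\bb z) := \max_{\gamma:\bb 0\to\bb z}\sum_{e\in\gamma}(\tau_e-L)^+
\]
is the directed last-passage time in the nonnegative environment $(\tau_e-L)^+$. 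Taking logarithms divided by $\beta$ yields the \emph{master bound}
\[
F(\bb 0,\bb z)-F_L(\bb 0,\bb z)\;\leq\; T^L(\bb 0,\bb z),\qquad \bb z\in\mathbb{Z}_+^d.
\]

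Because $0\leq(\tau_e-L)^+\leq\tau_e\in L^{d+\alpha}$, the LPP shape theorem of \cite{JMartin03} applies to $T^L$ and produces a continuous, positively homogeneous, coordinatewise-monotone limit shape $g_L:\mathbb{R}_+^d\to\mathbb{R}_+$ with $T^L(\bb 0,n\bb x)/n\to g_L(\bb x)$ a.s.\ and in $L^1$, together with the usual uniform almost-sure shape statement. Monotonicity of $T^L$ in its endpoint (any directed path to $\bb z$ extends by non-negative increments to one to $|\bb z|_\infty\bb e$) gives $T^L(\bb 0,\bb z)\leq T^L(\bb 0,|\bb z|_\infty\bb e)$ almost surely, so Fekete's lemma applied to the superadditive sequence $\mathbb{E} T^L(\bb 0,n\bb e)$ yields
\[
\mathbb{E} T^L(\bb 0,\bb z)\;\leq\;|\bb z|_\infty\, g_L(\bb e)\;\leq\;|\bb z|_1\,g_L(\bb e)\quad\text{for every }\bb z\in\mathbb{Z}_+^d,
\]
and similarly an almost-sure version for $\bb z$ with $|\bb z|_\infty$ sufficiently large.

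The single quantitative ingredient still needed is
\[
g_L(\bb e)\;\xrightarrow[L\to\infty]{}\;0,
\]
and this is the main obstacle: although $(\tau_e-L)^+\downarrow 0$ and $\mathbb{E}(\tau_e-L)^+\to 0$ are immediate from dominated convergence, showing that the time constant of an LPP problem defined by a maximum over exponentially many paths actually tends to zero requires either a direct moment/path-counting bound of the form $g_L(\bb e)\leq c_d\,\mathbb{E}(\tau_e-L)^+ + o_L(1)$ (handled by splitting the environment into the bounded part $\min\{\tau_e,L+1\}-L$, controlled by Lemma~\ref{lem:conc}, and the rare-edge part $\tau_e I(\tau_e > L+1)$, controlled via the hypothesis $\mathbb{E}\tau_e^{d+\alpha}<\infty$), or a continuity statement for the LPP shape in the weight distribution of the type already used in \cite{JMartin03}. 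Once $g_L(\bb e)\to 0$ is secured, the three items follow easily: (1) by dividing the master bound by $n$ along $\bb z=n\bb u$ and using $g_L(\bb u)\leq R\, g_L(\bb e)$ for $|\bb u|_1\leq R$; (3) by taking expectations in the master bound; and (2) by combining the master bound with the almost-sure shape theorem for $T^L$ at an $L$ chosen so that $g_L(\bb e)<\e$.
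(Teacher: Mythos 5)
Your reduction is exactly the paper's: the pointwise bound $F(\bb 0,\bb z)-F_L(\bb 0,\bb z)\leq T^L(\bb 0,\bb z)$, where $T^L$ is directed last-passage percolation in the weights $(\tau_e-L)^+$, followed by Martin's LPP limit-shape results. But the one quantitative ingredient that makes the lemma true --- which you yourself flag as ``the main obstacle,'' namely that $g_L(\bb e)\to 0$ as $L\to\infty$, and in fact the stronger uniform statement $\E T^L(\bb 0,\bb z)\leq \e|\bb z|_1$ for all $\bb z\in\Z_+^d$ needed for part (3) --- is left unproved, and neither of the two routes you sketch would close it as stated. A bound of the form $g_L(\bb e)\leq c_d\,\E(\tau_e-L)^+ + o_L(1)$ is not available by elementary means: the time constant of an LPP problem with sparse weights is \emph{not} comparable to the mean weight (for Bernoulli$(p)$ weights of size $M$ in $d=2$ the time constant is of order $M\sqrt{p}$ while the mean is $Mp$), so any correct bound must involve the $1/d$-th power of the tail; and your proposed split is not controlled by Lemma~\ref{lem:conc}, which is an Azuma concentration estimate about fluctuations around the mean and says nothing about the size of the limit of the ``bounded part.''

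The paper closes precisely this gap by citing \cite[Lemma~3.5(i)]{JMartin03} (a greedy-lattice-animals type estimate):
\[
\E \tilde T_L(\bb 0,\bb z)\ \leq\ c\,|\bb z|_1 \int_0^\infty \Pro\big(\tilde\tau_e\geq s\big)^{1/d}\,\mathrm{d}s \qquad \text{for all } \bb z\in\Z_+^d\ ,
\]
where $\tilde\tau_e$ has the law of $(\tau_e-L)^+$. The hypothesis $\E\tau_e^{d+\alpha}<\infty$ is exactly what makes this integral finite, and dominated convergence sends it to $0$ as $L\to\infty$; this simultaneously gives part (3) (expectation in the master bound), part (1) (divide by $n$ along $n\bb u$), and part (2) (combine with the a.s.\ LPP shape theorem \cite[Theorem~5.1]{JMartin03} for $\tilde T_L$), just as in your outline. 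So the architecture of your argument is right, but the heart of the proof --- the tail-power estimate on the LPP mean, which is also the only place the $d+\alpha$ moment assumption is used --- is missing and cannot be replaced by the mean-based or concentration-based substitutes you suggest.
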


\begin{proof}
We begin by estimating the difference between the free energies. This will be used in all parts of the lemma. For $\bb u \in \mathbb{R}^d_+$,
\begin{align}
F(\bb 0, \bb u) - F_L(\bb 0, \bb u) = -\frac{1}{\beta} \log \frac{\sum_{\gamma: \bb 0 \rightarrow [\bb u]} e^{-\beta \sum_{e \in \gamma} \tau_e}}{\sum_{\gamma: \bb 0 \rightarrow [\bb u]} e^{-\beta \sum_{e \in \gamma} \tau_e^L}} &= -\frac{1}{\beta} \log \frac{\sum_{\gamma: \bb 0 \rightarrow [\bb u]} e^{-\beta \sum_{e \in \gamma} \tau_e - \tau_e^L -\beta \sum_{e \in \gamma}  \tau_e^L}}{\sum_{\gamma: \bb 0 \rightarrow [\bb u]} e^{-\beta \sum_{e \in \gamma} \tau_e^L}} \nonumber \\ 
 &\leq \max_{\gamma: \bb 0 \rightarrow [\bb u]} \sum_{e \in \gamma} (\tau_e - \tau^L_e) \label{sbound}\ .
\end{align}

The last term in \eqref{sbound} is just the last-passage time (see \eqref{eq:lpp}) $\tilde T_L(\bb 0, [\bb u])$ from $\bb 0$ to $[\bb u]$ using i.i.d. edge weights $(\tilde \tau_e)$ whose distribution satisfies
\[
\tilde \tau_e = \begin{cases}
0 & \text{with probability } \mathbb{P}(\tau_e \leq L) \\
\tau_e-L & \text{with probability } \mathbb{P}(\tau_e > L)
\end{cases}\ .
\]
Since $\mathbb{E} \tilde \tau_e <\infty$, \cite[Proposition~2.2]{JMartin03} implies that the limit shape function
\[
0 \leq G_L(\bb u) := \lim_{n\rightarrow \infty} (1/n) \tilde T_L(\bb 0, n\bb u) < \infty
\]
exists a.s. and in $L^1$. Furthermore, \cite[Lemma~3.5(i)]{JMartin03} provides a constant $c>0$ such that
\begin{equation}\label{eq: name}
\text{ for all } \bb z \in \mathbb{Z}^d_+,~ \mathbb{E} \tilde T_L(\bb 0, \bb z) \leq c |\bb z|_1 \int_0^\infty  \mathbb{P}(\tilde \tau_e \geq s)^{1/d} ~\text{d} s\ .
\end{equation}
The condition $\mathbb{E} \tau_e^{d+\alpha} < \infty$ implies that the integral on the right is finite. Given $\e, R>0$, choose $L$ such that $\int_0^\infty \mathbb{P}(\tilde \tau_e \geq s)^{1/d} ~\text{d}s < \e/(cR)$. Then for any $\bb u \in \mathbb{R}^d_+$ such that $|\bb u|_1 \leq R$,
\[
G_L(\bb u) = \lim_{n \to \infty} (1/n) \mathbb{E} \tilde T_L(\bb 0,n \bb u) \leq \e\ .
\]
Therefore, $f(\bb u) - f_L(\bb u) \leq G_L(\bb u) \leq \e$, proving part 1.

For the second part, use \eqref{eq: name} to choose $L$ large enough that $G_L(\bb u) \leq \frac{\e}{2} |\bb u|_1$ for all $\bb u \in \mathbb{R}_+^d$. The shape theorem in last-passage percolation \cite[Theorem 5.1]{JMartin03} implies that for all but finitely many $ \bb u \in \Z^d_{+}$,
\begin{equation}\label{das}
\left|\max_{\gamma: \bb 0 \rightarrow \bb u} \sum_{e \in \gamma}( \tau_e - \tau^L_e) - G_L(\bb u) \right| \leq \frac{\e}{2} |\bb u|_1\ .
\end{equation}
Combining \eqref{sbound}, $G_L(\bb u) \leq \frac{\e}{2} |\bb u|_1$ and \eqref{das}, we end the proof of part two.

Part three also follows from \eqref{eq: name}; given $\e>0$ we can find $L$ such that for all $\bb z \in \mathbb{Z}^d_+$, $\mathbb{E} \tilde T_L(\bb 0, \bb z) \leq \e |\bb z|_1$. Taking expectation in \eqref{sbound} 
and combining with this statement finishes the proof.
\end{proof}

\subsection{Proof of Proposition~\ref{shapethm}}
If $\mathbb{P}(\tau_e=0)=1$ then the model is deterministic and there is nothing to prove. Otherwise we use part (ii) of Proposition~\ref{prop:pro1} and subadditivity to get
\[
0 < \inf_{ \bb 0 \neq \bb x \in \mathbb{R}^d_+} \frac{f(\bb x)}{|\bb x|_1} \leq \sup_{\bb 0 \neq \bb x \in \mathbb R^d_{+}} \frac{f(\bb x)}{| \bb x|_1} \leq d ~f(1,0,\ldots,0) = d ~\E \tau_e < \infty\ .
\]
Therefore to prove the shape theorem we must show the following. For any $\e > 0$, there are almost surely only finitely many $\bb z \in \Z^d_+$ such that 
\[
|F(\bb 0, \bb z) - f(\bb z)| \geq  \e |\bb z|_1\ .
\]
This statement is a consequence of the following lemmas:

\begin{lemma}\label{lem:lem1ls} 
For each $\e>0$, 
\[
\mathbb{P}\left( |F(\bb 0, \bb z) - \E F(\bb 0, \bb z)| < \e | \bb z |_1 \text{ for all but finitely many } \bb z \in \Z^d_+\right) = 1\ .
\]
\end{lemma}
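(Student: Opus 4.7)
The plan is to reduce to the bounded-weight case using the Truncation Lemma \ref{truncation}, then apply the Azuma concentration estimate of Lemma \ref{lem:conc} to the truncated free energy, and finally use a Borel--Cantelli argument to conclude. The main obstacle is that Lemma \ref{lem:conc} requires bounded weights, whereas we only assume $\mathbb{E}\tau_e^{d+\alpha}<\infty$.

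Given $\e>0$, I would first invoke parts 2 and 3 of Lemma \ref{truncation} to choose a single $L$ such that simultaneously
\[
F(\bb 0, \bb z) \leq F_L(\bb 0, \bb z) + (\e/3)|\bb z|_1 \quad \text{for all but finitely many } \bb z \in \mathbb{Z}_+^d \text{ a.s.,}
\]
and
\[
\mathbb{E} F(\bb 0, \bb z) \leq \mathbb{E} F_L(\bb 0, \bb z) + (\e/3)|\bb z|_1 \quad \text{for all } \bb z \in \mathbb{Z}_+^d.
\]
Because $\tau_e^L \leq \tau_e$ we have $F_L \leq F$ (and hence $\mathbb{E} F_L \leq \mathbb{E} F$), so these become two-sided bounds: almost surely for all but finitely many $\bb z$,
\[
|F(\bb 0,\bb z) - F_L(\bb 0,\bb z)|\leq (\e/3)|\bb z|_1,\qquad |\mathbb{E} F(\bb 0,\bb z) - \mathbb{E} F_L(\bb 0,\bb z)|\leq (\e/3)|\bb z|_1.
\]

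The problem is thereby reduced to showing that the truncated free energy $F_L$ concentrates around its mean at scale $\e|\bb z|_1/3$. Here the bounded-weight assumption of Lemma \ref{lem:conc} applies: taking $t = (\e/3)\sqrt{|\bb z|_1}$ there gives
\[
\mathbb{P}\bigl(|F_L(\bb 0, \bb z) - \mathbb{E} F_L(\bb 0, \bb z)| > (\e/3)|\bb z|_1\bigr) \leq 2 \exp\bigl(-c_\e |\bb z|_1\bigr)
\]
for a constant $c_\e = \e^2/(18 L^2)>0$. Since the number of $\bb z \in \mathbb{Z}_+^d$ with $|\bb z|_1 = n$ is polynomial in $n$, summing the right-hand side over all $\bb z \in \mathbb{Z}_+^d$ yields a convergent series.

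A standard Borel--Cantelli argument then shows that almost surely only finitely many $\bb z$ satisfy $|F_L(\bb 0,\bb z) - \mathbb{E} F_L(\bb 0,\bb z)| > (\e/3)|\bb z|_1$. Combining this with the two truncation bounds via the triangle inequality
\[
|F(\bb 0,\bb z) - \mathbb{E} F(\bb 0,\bb z)| \leq |F(\bb 0,\bb z) - F_L(\bb 0,\bb z)| + |F_L(\bb 0,\bb z) - \mathbb{E} F_L(\bb 0,\bb z)| + |\mathbb{E} F_L(\bb 0,\bb z) - \mathbb{E} F(\bb 0,\bb z)|
\]
gives $|F(\bb 0,\bb z) - \mathbb{E} F(\bb 0,\bb z)| \leq \e|\bb z|_1$ for all but finitely many $\bb z$ almost surely, as desired.
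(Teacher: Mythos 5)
Your proof is correct and takes essentially the same approach as the paper: Azuma concentration via Lemma~\ref{lem:conc} plus a Borel--Cantelli argument for bounded weights, with parts 2 and 3 of the Truncation Lemma~\ref{truncation} reducing the general case to the truncated one; you merely spell out the truncation step that the paper compresses into a single sentence. The only cosmetic point is that your triangle inequality gives $\leq \e |\bb z|_1$ rather than the strict inequality in the statement, which is fixed by running the whole argument with $\e/2$ in place of $\e$.
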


\begin{lemma}\label{lem:lem2ls}
For each $\e >0$, for all but finitely many  $\bb z \in \Z^d_+$, $| \E F(\bb 0, \bb z) -f(\bb z)| < \e | \bb z |_1$.
\end{lemma}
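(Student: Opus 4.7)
The plan is to split the bound into a trivial lower inequality $\E F(\bb 0, \bb z) \geq f(\bb z)$ and a nontrivial upper inequality $\E F(\bb 0, \bb z) \leq f(\bb z) + \e |\bb z|_1$, and to reduce general weights to the bounded case via Truncation Lemma~\ref{truncation}(3).

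For the lower side I will note that no error term is needed. Because path concatenation gives $F(\bb 0, (n+m)\bb z) \leq F(\bb 0, n\bb z) + F(n\bb z, (n+m)\bb z)$, taking expectations and using translation invariance shows that $a_n := \E F(\bb 0, n\bb z)$ is a subadditive real sequence. Fekete's lemma then identifies $f(\bb z) = \inf_n a_n/n$, and in particular $f(\bb z) \leq a_1 = \E F(\bb 0, \bb z)$.

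The upper side is the substantive part. I will first assume $\tau_e \leq L$ almost surely. The key idea is uniform continuity of $f$ on the compact simplex $S := \{\bb x \in \R^d_+: |\bb x|_1 = 1\}$: given $\e > 0$, choose $\delta > 0$ so that $|f(\bb u) - f(\bb v)| < \e/4$ whenever $\bb u, \bb v \in S$ with $|\bb u - \bb v|_1 < \delta$, and fix a finite set of rational directions $\bb q_j = \bb n_j/|\bb n_j|_1 \in S$ (with $\bb n_j \in \Z^d_+$) whose $\delta$-neighbourhoods cover $S$. Applying Proposition~\ref{prop:pro1}(1) separately along each $\bb n_j$ produces $N$ with $\E F(\bb 0, K\bb n_j) \leq K |\bb n_j|_1 (f(\bb q_j) + \e/4)$ for $K \geq N$ and every $j$. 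For general $\bb z \in \Z^d_+$ with $|\bb z|_1$ large, I will pick the $\bb q_{j^*}$ nearest to $\bb z/|\bb z|_1$ and decompose $\bb z = K\bb n_{j^*} + \bb r$ with $K$ the largest integer such that $K\bb n_{j^*} \leq \bb z$ coordinatewise; elementary geometry gives $|\bb r|_1 \leq C(\delta |\bb z|_1 + |\bb n_{j^*}|_1)$. Combining subadditivity of $\E F$ with the trivial bound $\E F(\bb 0, \bb r) \leq L |\bb r|_1$ and with the continuity estimate $|f(\bb q_{j^*}) - f(\bb z/|\bb z|_1)| < \e/4$, I will obtain $\E F(\bb 0, \bb z) \leq f(\bb z) + C'(\e + L\delta) |\bb z|_1$ for all sufficiently large $|\bb z|_1$, which closes the bounded-weight case after tuning $\delta$ against $\e/L$.

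To pass to general weights I will apply Truncation Lemma~\ref{truncation}(3), which supplies $L$ with $\E F(\bb 0, \bb z) \leq \E F_L(\bb 0, \bb z) + \eta|\bb z|_1$ for all $\bb z$, together with Truncation Lemma~\ref{truncation}(1), which gives $f \leq f_L + \eta$ uniformly on the unit ball; the bounded-weight result applied to $F_L$ and $f_L$ then yields the statement. I expect the main obstacle to be the geometric decomposition $\bb z = K\bb n_{j^*} + \bb r$, because $K\bb n_{j^*} \leq \bb z$ must hold coordinatewise and $K$ is therefore constrained by the smallest positive coordinate of $\bb n_{j^*}$; keeping $|\bb r|_1$ of order $\delta|\bb z|_1$ will force the rational cover to respect the face structure of $S$, so that boundary directions are matched with cover vectors lying on the same face. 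This is a manageable but slightly fiddly bookkeeping step.
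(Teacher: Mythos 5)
Your overall architecture is the same as the paper's: the lower inequality $\E F(\bb 0,\bb z)\geq f(\bb z)$ from subadditivity, the bounded-weight case via uniform continuity of $f$ plus a finite net of directions along which Proposition~\ref{prop:pro1}(1) gives uniform convergence of $\frac1n\E F(\bb 0,n\bb y)$, and then parts (1) and (3) of the Truncation Lemma to remove the boundedness. The place where you diverge is the decomposition step, and that is exactly where your argument has a genuine gap. You set $K$ to be the largest integer with $K\bb n_{j^*}\leq \bb z$ coordinatewise, where $\bb q_{j^*}=\bb n_{j^*}/|\bb n_{j^*}|_1$ is the cover direction nearest to $\bb z/|\bb z|_1$, and you claim $|\bb r|_1\leq C(\delta|\bb z|_1+|\bb n_{j^*}|_1)$. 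But $K=\lfloor\min_i z_i/(n_{j^*})_i\rfloor$ is controlled by the coordinate with the smallest ratio, and an additive $\ell_1$ approximation $|\bb z/|\bb z|_1-\bb q_{j^*}|_1<\delta$ gives no useful lower bound on $z_i/|\bb z|_1$ in coordinates where $q_i$ is itself of order $\delta$. Concretely, in $d=2$ take $\bb z=(n,1)$: its direction lies in the open simplex, so your ``match boundary directions to cover vectors on the same face'' rule would match it to an interior $\bb q_{j^*}$ with second coordinate bounded below by a constant $c(\delta)>0$, forcing $K\leq 1/(n_{j^*})_2=O(1)$ and hence $|\bb r|_1$ of order $|\bb z|_1$, not $\delta|\bb z|_1$. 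The problem is not exact boundary directions (which your face condition does handle) but directions with small positive coordinates; to repair it you would need an extra thresholding step, e.g.\ drop from the matching every coordinate with $z_i<\delta|\bb z|_1$ (its total contribution to $|\bb r|_1$ is at most $d\delta|\bb z|_1$) and approximate the remaining renormalized vector by a net of that lower-dimensional face with mesh small compared to $\delta$, so that $z_i\geq(1-C\delta)q_i|\bb z|_1$ on the support of $\bb n_{j^*}$.

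The paper avoids this bookkeeping with a cleaner device: it normalizes by $\max_i z_i$ rather than $|\bb z|_1$ and takes $\bb y=u\bigl(\lfloor z_1/(u\max z_i)\rfloor,\ldots,\lfloor z_d/(u\max z_i)\rfloor\bigr)$ on a mesh-$u$ grid; the coordinatewise floor simultaneously guarantees $(\max z_i)\bb y\leq\bb z$ and that each coordinate of the remainder is at most $u\max z_i$, so $|\bb z-(\max z_i)\bb y|_1\leq ud\,|\bb z|_1$ with no face analysis at all. If you adopt that normalization your proof goes through essentially verbatim. Two minor points: your bound $\E F(\bb 0,\bb r)\leq L|\bb r|_1$ misses the entropy term coming from the normalization $d^{-|\bb r|_1}$ in \eqref{eq:freeenergy}; the correct trivial bound is $(L+\frac{\log d}{\beta})|\bb r|_1$, which changes only constants. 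Also note $(\max z_i)$ is an integer, so the uniform convergence along net directions applies directly to $\E F(\bb 0,(\max z_i)\bb y)$, just as your convergence along $K\bb n_j$ would.
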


\begin{proof}[Proof of Lemma \ref{lem:lem1ls}] If the weights are bounded by $L>0$ then one can apply the concentration inequality of Lemma \ref{lem:conc} to obtain:

\begin{equation}
\Pro \bigg( |F(\bb 0, \bb z) - \E F(\bb 0, \bb z)| > \e | \bb z|_1\bigg) \leq 2 \exp \left( - \frac{\e^2| \bb z|_1}{2L^2} \right)\ .
\end{equation}

For $n \in \mathbb{N}$, there are no more than $C (n+1)^d$ points $\bb z$ such that $| \bb z |_1 = n$. Thus, 
\[
\sum_{\bb z \in \Z^d_+} \Pro \bigg( |F(\bb 0, \bb z) - \E F(\bb 0, \bb z)| > \e |\bb z|_1 \bigg) \leq 2C \sum_{n \in \N} (n+1)^d \exp\bigg ( - \frac{\e^2n}{2L^2}\bigg) < \infty
\]
and Borel-Cantelli finishes the proof in the case of bounded weights.

The case of unbounded weights now follows by combining the above result with parts 2 and 3 of the truncation lemma.
\end{proof}

\begin{proof}[Proof of Lemma \ref{lem:lem2ls}] 
From subadditivity, $\E F(\bb 0, \bb z) \geq f(\bb z)$ for all $\bb z$. Therefore we just need to show that if $\e>0$ then $ \E F(\bb 0, \bb z) < f(\bb z)+\e | \bb z|_1$ except for finitely many $\bb z$. 

First, assume that the weights are bounded by $L>0$. Fix $a > 0$. By Proposition \ref{prop:pro1}, part (6), $f$~is continuous on $\R^d_{+}$, and hence is uniformly continuous on the compact subset $\{\bb x \in \R^d_+ : |\bb x|_1 \leq 2d \}.$ Choose $0 < u < \min(1,a)$ such that 
\[
\text{whenever } |\bb x|_1 \leq d \text{ and } | \bb x - \bb x' |_1 \leq ud,~ |f(\bb x)-f(\bb x')|\leq a\ .
\]
Now let
\[
\mathcal{C} = \bigg \{ u \bb r , \bb r \in \left\{0, 1, \ldots, \left\lfloor \frac{1}{u} \right\rfloor \right\}^d \bigg\}\ .
\]
$\mathcal{C}$ is a finite subset of $\R^d_+$ and for each $\bb y \in \mathcal{C}$, we have (by Proposition \ref{prop:pro1} part (1)), 
\[
\frac{1}{n}\E F(\bb 0,  n \bb y) \rightarrow f(\bb y), \quad \text{as} \quad n \rightarrow \infty\ .
\]
Hence there is $N=N(a)$ such that, for all $n\geq N$ and all $\bb y\in \mathcal{C}$,
\[
\E F(\bb 0, n \bb y) \leq n (f(\bb y)+a)\ .
\]
Let $\bb z = (z_1, \ldots, z_d)$ in $\mathbb{Z}_+^d$ satisfy $\max z_i \geq N$. Define 
\[
\bb y=u \left( \left\lfloor \frac{z_1}{u \max z_i}\right\rfloor , \ldots, \left\lfloor \frac{z_d}{u \max z_i} \right\rfloor \right)\ .
\]
Then $\bb y \in \mathcal{C}$, with $(\max z_i) \bb y \leq \bb z$, with $| \bb y |_1 \leq d$ and with
\[
\left | \frac{\bb z}{\max z_i} - \bb y \right |_1 \leq ud \leq ad\ .
\]

Using first subadditivity, the bound $\tau_e \leq L$, then the continuity bounds above, we obtain

\begin{equation*}
\begin{split}
 \E F (\bb 0,\bb z) &\leq  \E F (\bb 0, (\max z_i )\bb y ) + \E F (\bb0, \bb z -  (\max z_i )\bb y ) \\
&\leq \E F (\bb 0, (\max z_i )\bb y ) + \left(L+ \frac{\log d}{\beta}\right) |[\bb z - (\max z_i )\bb y]|_1\\
&\leq (f(\bb y)+a)(\max z_i) + \left(L + \frac{\log d}{\beta}\right) (|\bb z - (\max z_i )\bb y|_1 + d)\\
&\leq f(\bb z) + (\max z_i ) \bigg(2a + \left(L + \frac{\log d}{\beta}\right) \left | \frac{\bb z}{\max z_i} - \bb y \right|_1+ \left(L + \frac{\log d}{\beta}\right)\frac{d}{\max z_i}\bigg)\\
 &\leq f(\bb z) + (\max z_i) \bigg( 2a + \left(L + \frac{\log d}{\beta}\right)ad + \left(L + \frac{\log d}{\beta}\right)\frac{d}{\max z_i}\bigg)\ .
 \end{split}
\end{equation*} 
 
Hence if $a < \e (4(2 + (L+ (1/\beta)\log d)d))^{-1}$, then for all $\bb z$ with $|\bb z|_1 \geq \max (N(a) ,2(L+(1/\beta)\log d)d/ \e)$, we have
\[
\E F (\bb 0,\bb z) \leq f(\bb z) + \e |\bb z|_1\ ,
\]
and this finishes the proof in the case of bounded weights. 

The case of unbounded weights follows from parts 1 and 3 of the truncation lemma. Indeed, given $\e>0$, part 1 gives $L$ such that for all $\bb u \in \mathbb{R}^d_+$ with $|\bb u|_1 \leq 1$, $f(\bb u) - f_L(\bb u) < \e/3$. Then as both limiting free energies are positive homogeneous,
\[
f(\bb u) - f_L(\bb u) < (\e/3) |\bb u|_1 \text{ for all } \bb u \in \mathbb{R}^d_+\ .
\]
Now part 3 provides a (possibly larger) $L$ such that also
\[
\mathbb{E} F(\bb 0, \bb u) - \mathbb{E} F_L(\bb 0, \bb u) \leq (\e/3) |\bb u|_1 \text{ for all } \bb u \in \mathbb{R}^d_+
\]
By combining these with the first part of this proof, we are done.

\end{proof}

\section{Alexander's method}\label{Alexander}

The goal of this last section is to prove the following lemma, which is based entirely on work of Alexander \cite{Alexander} and the extension by Chatterjee \cite{Sourav}.

\begin{lemma}\label{lem: AC}
Given $\chi'>\chi_a$ there exists $\alpha>0$ such that
\[
\sup_{ \bb x \in \mathbb{Z}^d_+\setminus \{ \bb 0\}} \E \exp\left( \alpha \frac{|F(\bb 0,\bb x)-f(\bb x)|}{|\bb x|_1^{\chi'}} \right) < \infty\ .
\]
\end{lemma}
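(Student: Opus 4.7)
The lemma asks for an exponential concentration of $F(\bb 0,\bb x)$ around the deterministic function $f(\bb x)$; since the definition of $\chi_a$ already supplies exponential concentration around the mean $\E F(\bb 0,\bb x)$, the real content is a non-random rate of convergence of $\E F(\bb 0,\bb x)$ to $f(\bb x)$. My plan is to pick $\hat\chi$ with $\chi_a<\hat\chi<\chi'$, so that by the definition of $\chi_a$ there exists $\alpha_0>0$ with
\[
K:=\sup_{\bb v\in\Z_+^d\setminus\{\bb 0\}}\E\exp\!\left(\alpha_0\,\frac{|F(\bb 0,\bb v)-\E F(\bb 0,\bb v)|}{|\bb v|_1^{\hat\chi}}\right)<\infty,
\]
and then to argue that it is enough to produce a constant $C>0$ such that
\begin{equation}\label{eq: plandet}
0\le\E F(\bb 0,\bb x)-f(\bb x)\le C\,|\bb x|_1^{\chi'}\qquad\text{for every }\bb x\in\Z_+^d\setminus\{\bb 0\}.
\end{equation}
Indeed, $|\bb x|_1\ge 1$ together with $\hat\chi<\chi'$ yields $|\bb x|_1^{\hat\chi}\le|\bb x|_1^{\chi'}$, so the triangle inequality and monotonicity of $\exp$ transfer the uniform exponential moment from $|F-\E F|/|\bb x|_1^{\hat\chi}$ to $|F-f|/|\bb x|_1^{\chi'}$ at the same exponential rate $\alpha=\alpha_0$, with uniform upper bound $e^{\alpha_0 C}K$.

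\textbf{The two halves of \eqref{eq: plandet}.} The lower bound is immediate from the subadditivity of $F$: $\E F(\bb 0,N\bb x)\le N\,\E F(\bb 0,\bb x)$, so dividing by $N$ and sending $N\to\infty$ gives $f(\bb x)\le\E F(\bb 0,\bb x)$. The upper bound is the main obstacle; it is Alexander's approximation method \cite{Alexander}, adapted to positive temperature in \cite{Sourav}, whose input is precisely the $\hat\chi$-scale concentration provided by $\chi_a$ and whose output is the target rate $|\bb x|_1^{\chi'}$. Writing $g(n):=\sup\{\E F(\bb 0,\bb y)-f(\bb y):|\bb y|_1\le n\}$, the plan is a dyadic-scale induction. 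The basic tool is the pointwise subadditivity $F(\bb 0,\bb x)\le F(\bb 0,\bb z)+F(\bb z,\bb x)$, applied simultaneously for $\bb z$ ranging over a large family $S$ of intermediate lattice points close to the midpoint of the segment from $\bb 0$ to $\bb x$. Positive homogeneity of $f$ along that segment makes $f(\bb z)+f(\bb x-\bb z)$ approximately equal to $f(\bb x)$, and a union bound over $\bb z\in S$ using the concentration of each $F(\bb 0,\bb z)+F(\bb z,\bb x)$ at scale $n^{\hat\chi}$ yields, with high probability, some $\bb z\in S$ at which the right side is far enough below its mean to beat the naive doubling $g(n)\le 2g(n/2)$ of the inductive error. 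Combined with the inductive hypothesis at level $n/2$, this produces a recursion of the form
\[
g(n)\le g(n/2)+O\bigl(n^{\hat\chi}(\log n)^{O(1)}\bigr),
\]
which iterates over $O(\log n)$ dyadic scales to $g(n)\le C n^{\hat\chi}(\log n)^{O(1)}\le C'n^{\chi'}$, the logarithmic factor being absorbed by the strict inequality $\hat\chi<\chi'$.

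\textbf{Where the difficulty lies.} The main obstacle is the quantitative execution of the inductive step: calibrating $|S|$ large enough that the union bound is guaranteed to produce a $\bb z$ at which $F(\bb 0,\bb z)+F(\bb z,\bb x)$ is sufficiently close to its mean, yet keeping $\bb z$ close enough to the segment that the curvature term $f(\bb z)+f(\bb x-\bb z)-f(\bb x)$ remains lower order, while handling the non-trivial dependencies among the variables $\{F(\bb 0,\bb z)\}_{\bb z\in S}$ that share edge weights. These are exactly the technical obstacles confronted in \cite{Alexander} and \cite{Sourav}, and their resolutions transfer to the directed polymer setting with only cosmetic changes.
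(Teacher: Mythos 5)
Your opening reduction is exactly the one the paper uses and is correct: pick $\hat\chi\in(\chi_a,\chi')$, use Definition~\ref{def3} for exponential concentration of $F(\bb 0,\bb x)$ about its mean at scale $|\bb x|_1^{\hat\chi}$, note $f(\bb x)\le \E F(\bb 0,\bb x)$ by subadditivity, and observe that everything then hinges on the deterministic bound $\E F(\bb 0,\bb x)-f(\bb x)\le C|\bb x|_1^{\chi'}$. The gap is in your account of how that bound is obtained. First, the probabilistic mechanism you propose cannot work: a union bound over intermediate points $\bb z\in S$ combined with concentration only controls the probability that \emph{some} $F(\bb 0,\bb z)+F(\bb z,\bb x)$ deviates far from its mean; it can never guarantee that some $\bb z$ sits \emph{below} its mean by a definite amount, so there is no ``gain'' to beat the doubling. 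Second, even setting that aside, the recursion you would actually obtain from the midpoint split is $g(n)\le 2\,g(n/2)+O(n^{\hat\chi}\log n)$, since \emph{both} halves $h(\bb z)$ and $h(\bb x-\bb z)$ carry the inductive error $g(n/2)$; unrolled over dyadic scales this gives only $g(n)=O(n)$, not the claimed $g(n)\le g(n/2)+O(n^{\hat\chi}(\log n)^{O(1)})$. Nothing in your sketch converts one of the two half-scale errors into a lower-order term, and this is precisely the obstruction Alexander's method is designed to circumvent.

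The actual argument (Appendix~B of the paper, following \cite{Alexander} and \cite{Sourav}) is structured differently: for fixed $\bb x$ one introduces the linear functional $f_{\bb x}$ tangent to the limit shape at $\bb x$, which is \emph{exactly} additive and hence immune to error doubling, and the set $Q_{\bb x}(C,K)$ of increments $\bb y$ on which $h(\bb y)\le f_{\bb x}(\bb y)+C|\bb x|_1^{\chi''}$. One then shows, via a skeleton decomposition of directed paths from $\bb 0$ to $n\bb x$ into at most $O(n)$ increments, a count of the number of skeletons, and the $\chi_a$-concentration applied to the independent increment variables, that such skeletons with increments in $Q_{\bb x}(C,K)$ exist; Alexander's Lemma~1.6 (Lemma~\ref{AlexL}) then yields $h(\bb x)\le f(\bb x)+C'|\bb x|_1^{\chi''}\log|\bb x|_1$, and the logarithm is absorbed since $\chi''<\chi'$. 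Note also that the transfer from \cite{Sourav} is not purely cosmetic: the paper must substitute the bounds \eqref{Alternative1}--\eqref{Alternative2} for an identity that fails in the polymer setting (see Lemma~\ref{lemma1111}) and must verify the skeleton hypothesis from Definition~\ref{def3}. So while you have correctly identified which literature carries the load, the mechanism you describe for the key deterministic estimate would not close, and the proposal as written does not constitute a proof.
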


The main task in proving Lemma~\ref{lem: AC} is to control the order of deviations of $h(\bb x) := \E F(\bb 0,\bb x)$ from $f(\bb x)$. In the zero temperature case this was beautifully done by Alexander in \cite{Alexander} and adapted by Chatterjee in \cite{Sourav}. Recently, in the positive temperature case, Alexander and Zygouras in \cite{AlexZ} showed that $\E F(\bb 0,\bb x) - f(\bb x) = O(\frac{|\bb x|^{\frac{1}{2}}}{ \log |\bb x |})$  under a certain assumption on the weight distribution. Since we take $\chi' > \chi_a$ and therefore do not require a fine result involving logarithms, we do not need to use the methods developed in \cite{AlexZ}. 

We set $H_{\bb x}$ to be any hyperplane tangent to $f(\bb x)B$ at $\bb x$.  Let $H_{\bb x}^0$ be the translation of $H_{\bb x}$ that passes through the origin. There exists a unique linear functional $f_{\bb x}$ on $\R^d$ satisfying $f_{\bb x}(\bb y) = 0$ for all $\bb y \in H_{\bb x}^0$ and $f_{\bb x}(\bb x) = f(\bb x)$. Note that $f_{\bb x}(\bb y) \leq f(\bb y)$ for all $\bb y$. We can see this as follows. If $\bb y = 0$ it is clearly true. Otherwise, $\bb y / f(\bb y) \in B$ and so $f_{\bb x}(\bb y / f(\bb y)) \leq 1$.  Furthermore, since $f$ is convex and symmetric about the diagonal through $\bb 0$ and $\bb e$, we have 
\begin{equation}\label{Alternative1}
f(\bb z) \geq \frac{f(\bb e)|\bb z|_1}{d}\ .
\end{equation} 
From subadditivity and symmetry we also obtain 
\begin{equation}\label{Alternative2}
f(\bb z) \leq f(\bb e_1) |\bb z|_1\ .
\end{equation}

Fix $\chi''>\chi_a$. For each $\bb x \in \R_+^d, C >0$ and $K>0$ define

\[Q_{\bb x}(C,K) := \{ \bb y \in \Z_+^d: |\bb y|_1\leq K|\bb x|_1, f_{\bb x}(\bb y) \leq f(\bb x), h(\bb y) \leq f_{\bb x}(\bb y) + C|\bb x|_1^{\chi''} \}\ ,
\]
\[ G_{\bb x} := \{ \bb y \in \Z_+^d : f_{\bb x}(\bb y)> f(\bb x) \}  \ , \]
\[ \Delta_{\bb x} := \{\bb y \in Q_{\bb x}: \bb y \; \text{adjacent to} \; \Z^d\setminus Q_{\bb x},~ \bb y \text{ not adjacent to } G_{\bb x}\;    \} \ , \]
\[ D_{\bb x} := \{ \bb y \in Q_{\bb x}: \bb y \; \text{adjacent to} \; G_{\bb x} \} \ .  \] 
Now set $$Q_{\bb x} = Q_{\bb x} (C_1, 2 d^{3/2}f(\bb e_1)/ f(\bb e)+1)$$
where $C_1 := 320 d^2/\alpha.$ 
The following lemma is the analogue of \cite[Lemma~3.3]{Alexander} and \cite[Lemma~4.3]{Sourav}:

\begin{lemma}\label{lemma1111} There exists a constant $C'>0$ such that if $|\bb x |_1> C'$ then the following hold.
\begin{enumerate}
\item If $\bb y \in Q_{\bb x}$ then  $f(\bb y) \leq 2 f(\bb x)$ and $|\bb y |_1 \leq 2d^{\frac{3}{2}}f(\bb e_1)|\bb x|_1 / f(\bb e)$.
\item If $\bb y \in \Delta_{\bb x}$ then $h(\bb y) - f_{\bb x}(\bb y) \geq C_1 |\bb x |_1^{\chi''}(\log |\bb x|_1)/2.$
\item If $\bb y \in D_{\bb x}$ then $f_{\bb x}(\bb y) \geq 5g(\bb x)/6$.
\end{enumerate}
\end{lemma}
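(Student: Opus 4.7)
The approach is to check each of the three items in turn by unwinding the definitions of $Q_{\bb x}$, $G_{\bb x}$, $\Delta_{\bb x}$, $D_{\bb x}$ and using the three workhorse inequalities already in place: the supporting-hyperplane bound $f_{\bb x}(\bb y) \leq f(\bb y)$ for $\bb y \in \R_+^d$, subadditivity $h(\bb y) \geq f(\bb y)$, and the linear two-sided control \eqref{Alternative1}--\eqref{Alternative2}. Because the assertion is only required for $|\bb x|_1 > C'$, any additive $O(1)$ or lower-order-in-$|\bb x|_1$ error can be absorbed into the threshold $C'$. I read the last inequality in the definition of $Q_{\bb x}$ as $h(\bb y) \leq f_{\bb x}(\bb y) + C|\bb x|_1^{\chi''}\log|\bb x|_1$, matching Alexander~\cite{Alexander}; without the $\log$ factor, item (2) would directly contradict membership in $Q_{\bb x}$.

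For item (1), chaining the two defining inequalities of $Q_{\bb x}$ with $f(\bb y) \leq h(\bb y)$ gives $f(\bb y) \leq f(\bb x) + C_1|\bb x|_1^{\chi''}\log|\bb x|_1$. Since $\chi''<1$ and $f(\bb x) \geq f(\bb e)|\bb x|_1/d$ by \eqref{Alternative1}, the remainder is dominated by $f(\bb x)$ once $|\bb x|_1$ is large, so $f(\bb y) \leq 2f(\bb x)$; applying \eqref{Alternative1} to $\bb y$ and \eqref{Alternative2} to $\bb x$ then converts this into $|\bb y|_1 \leq 2d f(\bb e_1)|\bb x|_1/f(\bb e)$. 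The extra factor $d^{1/2}$ in the stated bound comes from comparing the $\ell^1$ norm to the Euclidean norm used in the definition of the tangent hyperplane $H_{\bb x}$.

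For item (2), fix $\bb y \in \Delta_{\bb x}$ and a lattice neighbor $\bb z \notin Q_{\bb x}$. The ``not adjacent to $G_{\bb x}$'' clause forces $f_{\bb x}(\bb z) \leq f(\bb x)$, and the $+1$ slack in $K$ ensures $|\bb z|_1 \leq K|\bb x|_1$, so failure of $\bb z \in Q_{\bb x}$ must come from the last defining inequality: $h(\bb z) > f_{\bb x}(\bb z) + C_1|\bb x|_1^{\chi''}\log|\bb x|_1$. A one-edge subadditive estimate yields $|h(\bb z)-h(\bb y)| = O(1)$ and linearity gives $|f_{\bb x}(\bb z) - f_{\bb x}(\bb y)| \leq f(\bb e_1)$, so
\[
h(\bb y) - f_{\bb x}(\bb y) \geq h(\bb z) - f_{\bb x}(\bb z) - O(1) \geq \tfrac{1}{2} C_1 |\bb x|_1^{\chi''}\log|\bb x|_1
\]
once $|\bb x|_1 > C'$.

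For item (3) (where I take $g(\bb x)$ to be a typo for $f(\bb x)$), let $\bb z \in G_{\bb x}$ be a neighbor of $\bb y$, so $f_{\bb x}(\bb z) > f(\bb x)$ and $\bb z - \bb y = \pm \bb e_i$. It then suffices to establish the two-sided bound $|f_{\bb x}(\bb e_i)| \leq f(\bb e_1)$, since this gives $f_{\bb x}(\bb y) \geq f(\bb x) - f(\bb e_1) \geq 5 f(\bb x)/6$ provided $|\bb x|_1 \geq 6d f(\bb e_1)/f(\bb e)$ by \eqref{Alternative1}. The main technical obstacle in the whole lemma is precisely this two-sided bound: the upper half $f_{\bb x}(\bb e_i) \leq f(\bb e_i) = f(\bb e_1)$ is immediate from the supporting-hyperplane inequality applied to $\bb e_i \in \R_+^d$, while the matching lower bound requires either a coordinate-monotonicity property of $f$ or, equivalently, testing $f_{\bb x}$ against $\bb x - \bb e_i \in \R_+^d$ (assuming $x_i \geq 1$) together with the one-edge subadditive Lipschitz estimate $|f(\bb x)-f(\bb x - \bb e_i)| \leq f(\bb e_1)$, which itself needs a short independent argument in the positive-temperature setting. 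Once this two-sided bound is in place, the rest is pure inequality bookkeeping.
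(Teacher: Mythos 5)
The paper's own ``proof'' of this lemma is a one-line citation: the argument is that of \cite[Lemma~4.3]{Sourav}, with \eqref{Alternative1}--\eqref{Alternative2} substituted for Chatterjee's equation (11). Your write-up is a reconstruction of exactly that argument, and several of your editorial readings are correct: the last defining inequality of $Q_{\bb x}$ must carry the factor $\log|\bb x|_1$ (otherwise item (2) would contradict membership in $Q_{\bb x}$), $g(\bb x)$ in item (3) means $f(\bb x)$, and your item (1) is fine (it even gives the stronger constant $2d$ in place of $2d^{3/2}$).

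The gap is at the point you yourself flag, and your proposed patches do not close it. Both items (2) and (3) need a lower bound on $f_{\bb x}(\bb e_i)$: in item (2), even for a forward neighbor $\bb z=\bb y+\bb e_i$ one has $f_{\bb x}(\bb y)=f_{\bb x}(\bb z)-f_{\bb x}(\bb e_i)$, so you must show $-f_{\bb x}(\bb e_i)$ is of smaller order than $|\bb x|_1^{\chi''}\log|\bb x|_1$; in item (3), the case $\bb z=\bb y-\bb e_i$ needs $f_{\bb x}(\bb e_i)\ge -f(\bb x)/6$. In FPP this is free because $l_{\bb x}\le g$ on all of $\Z^d$ and $g$ is symmetric; here $f_{\bb x}\le f$ only on $\R^d_+$, which is precisely what the paper means when it says Chatterjee's (11) ``is not necessarily true in the model considered here.'' Your substitutes --- coordinate monotonicity of $f$, or the one-step bound $|f(\bb x)-f(\bb x-\bb e_i)|\le f(\bb e_1)$ --- are actually false for this model: subadditivity gives only $f(\bb x)\le f(\bb x-\bb e_i)+f(\bb e_1)$, and near a coordinate direction the reverse inequality fails by a logarithm. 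For instance in $d=2$ with (a small perturbation of) constant weights, the entropy $\frac{1}{m}\log\binom{m(n+1)}{m}\approx\log n$ of placing the transverse steps gives $f((n,1))=f((n,0))-\beta^{-1}\log n+O(1)$, so $f(\bb x-\bb e_2)-f(\bb x)\to\infty$ along $\bb x=(n,1)$ and the tangent coefficient $f_{\bb x}(\bb e_2)$ is of order $-\beta^{-1}\log n$; some genuinely different argument (or a weakened but still sufficient bound such as $-f_{\bb x}(\bb e_i)\le C\log|\bb x|_1$, which you do not prove) is required. A second slip of the same kind: in item (2) you assert that a neighbor $\bb z\notin Q_{\bb x}$ must violate the third defining inequality, but $\bb z$ may simply fail to lie in $\Z^d_+$ (under the literal definitions $\bb 0\in\Delta_{\bb x}$, which already shows the statement needs adjacency to be read as forward adjacency within $\Z^d_+$), and for a backward neighbor the one-edge estimate $h(\bb y)\ge h(\bb z)-O(1)$ is again not a consequence of subadditivity --- it is only available in the forward direction, via $Z(\bb 0,\bb y+\bb e_i)\ge Z(\bb 0,\bb y)e^{-\beta\tau}$.
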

\begin{proof}
The proof is as in \cite[Lemma 4.3]{Sourav} where equations \eqref{Alternative1} and \eqref{Alternative2} replace equation (11), which is not necessarily true in the model considered here.
\end{proof}

Although in the last lemma we had to use equations \eqref{Alternative1} and \eqref{Alternative2} to adapt the proof of Lemma \ref{lemma1111}, the next result follows directly from \cite[Lemma 1.6]{Alexander} (or \cite[Lemma 4.2]{Sourav}). In fact, those are undirected results, but the directed version follows as in \cite[Section~4]{Alexander}. 

\begin{lemma}\label{AlexL} Suppose that for some $M>1$, $C>0, K>0$ and $a>1$ the following holds. For each $\bb x\in \Z^d_+$ with $|\bb x|_1 \geq M$, there exists an integer $n \geq 1$, a directed lattice path $\gamma$ from $\bb 0$ to $n \bb x$ and a sequence of sites $\bb 0=v_0 \leq v_1 \leq \ldots \leq v_m=n\bb x$ in $\gamma$ such that $m \leq an$ and $v_i - v_{i-1} \in Q_{\bb x}(C,K)$ for all $1\leq i \leq m$. Then for some $C'>0$ and for all $\bb x \in \Z^d_+$ we have

\[ f(\bb x) \leq h(\bb x) \leq  f(\bb x) + C' |\bb x|_1^{\chi''} \log |\bb x|_1 \ .\]
\end{lemma}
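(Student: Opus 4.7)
The lower bound $f(\bb x) \leq h(\bb x)$ is immediate from subadditivity of $h$: since expectation preserves subadditivity of $F$, we have $h(m\bb x) \leq m\, h(\bb x)$ for every $m \in \mathbb{N}$, and Fekete's lemma gives $f(\bb x) = \lim_{m\to\infty} h(m\bb x)/m \leq h(\bb x)$. The content of the lemma is therefore the upper bound $h(\bb x) \leq f(\bb x) + C'|\bb x|_1^{\chi''}\log|\bb x|_1$, which I would obtain by directly invoking the combinatorial machinery of Alexander \cite[Lemma~1.6]{Alexander} (equivalently \cite[Lemma~4.2]{Sourav}). Those results are stated for undirected subadditive processes but, as noted in \cite[Section~4]{Alexander}, extend to directed lattice paths with no substantive change; the only real task is to verify that the hypothesis of Lemma~\ref{AlexL} implies the hypothesis of that combinatorial lemma.

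The building block is the following one-scale estimate, which follows directly from the hypothesis. Fix $\bb x$ with $|\bb x|_1 \geq M$ and let $n$, $\gamma$, and $\bb 0 = v_0 \leq v_1 \leq \cdots \leq v_m = n\bb x$ with $\bb w_i := v_i - v_{i-1} \in Q_{\bb x}(C,K)$ be as in the hypothesis. Using subadditivity of $h$, linearity of $f_{\bb x}$, the identity $\sum_i \bb w_i = n\bb x$, and the defining property $h(\bb w) \leq f_{\bb x}(\bb w) + C|\bb x|_1^{\chi''}$ for $\bb w \in Q_{\bb x}(C,K)$, one gets
\[
h(n\bb x) \leq \sum_{i=1}^{m} h(\bb w_i) \leq \sum_{i=1}^{m}\bigl(f_{\bb x}(\bb w_i) + C|\bb x|_1^{\chi''}\bigr) = n\,f(\bb x) + m\,C\,|\bb x|_1^{\chi''} \leq n\,f(\bb x) + a\,n\,C\,|\bb x|_1^{\chi''}.
\]
This gives $h(n\bb x) - n f(\bb x) = O\bigl(n |\bb x|_1^{\chi''}\bigr)$; by itself it only controls $h(n\bb x)/n$, and subadditivity then bounds $h(\bb x)$ in the wrong direction, so a further step is needed.

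Alexander's iterative cascade promotes the one-scale estimate to control of $h(\bb x) - f(\bb x)$ itself at the level $O(|\bb x|_1^{\chi''}\log|\bb x|_1)$. Concretely, one reapplies the hypothesis at a hierarchy of reference vectors whose norms descend dyadically from $|\bb x|_1$ down to $O(1)$; at each scale the per-scale error is $O(|\bb x'|_1^{\chi''})$, and summing over the $O(\log|\bb x|_1)$ scales produces the logarithmic factor in the conclusion. The main obstacle is exactly this reassembly: at each scale one must select intermediate lattice points lying in $Q_{\bb x'}(C,K)$ for the current reference vector $\bb x'$ and track the supporting linear functionals $f_{\bb x'}$ as they rotate with the reference direction, which is the technical content of \cite[Lemma~1.6]{Alexander} and \cite[Lemma~4.2]{Sourav}. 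Once that combinatorial lemma is invoked, the conclusion of Lemma~\ref{AlexL} follows.
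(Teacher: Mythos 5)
Your proposal is correct and takes essentially the same route as the paper: the paper gives no independent proof of this lemma either, simply asserting that it follows directly from Alexander's Lemma~1.6 (equivalently Chatterjee's Lemma~4.2), with the directed version obtained as in Section~4 of Alexander's paper, which is precisely the reduction you make. Your added observations---the lower bound $f(\bb x)\leq h(\bb x)$ via subadditivity and Fekete, and the one-scale skeleton estimate that you correctly flag as insufficient on its own---are accurate and consistent with the cited argument, so nothing essential is missing.
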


We now check that the assumption on the existence of the exponent $\chi_a$ implies that the hypothesis of Lemma \ref{AlexL} is satisfied with the choices $C=C_1$, $K= 2d^{\frac{3}{2}}f(\bb e_1)/f(\bb e)$ and $M$ large enough. We will need more notation though.

A collection of vertices $(v_i)$, $i=0,\ldots, m$ satisfying the hypothesis of Lemma \ref{AlexL} is called a skeleton of $n\bb x$ with $m+1$ steps. Let $\mathcal S_m $ be the collection of all possible skeletons of $n\bb x$ with $m+1$ steps. That is, define  $$ \mathcal S_m = \{\vec{\bb v} : \vec{\bb v} = \{0 \leq v_1\leq  v_2 \leq \ldots \leq   v_m\}\; \text{with} \; v_{i+1}-v_i \in Q_x(C,K) \ \forall i = 0, \ldots, m-1\} \ .$$ By Lemma~\ref{lemma1111}, part 1, there exists a constant $C_0$ such that the cardinality of $\mathcal S_m$ satisfies 
\begin{equation}\label{skeleton}
|\mathcal S_m | \leq (C_0|\bb x|_1^d)^m \ .
\end{equation}

Given a skeleton  $ \vec{\bb v} $, $F(v_i, v_{i+1})$, $i=0, \ldots, m-1$ are independent random variables. Also, by Definition~\ref{def3} and Lemma \ref{lemma1111}, part 1, there exists $C_1>0$ such that for all $i$,
\[ \E \exp\bigg(\frac{\alpha}{K^{\chi''}} \frac{|F(v_i, v_{i+1})-\E F(v_i, v_{i+1})|}{|\bb x|_1^{\chi''}} \bigg) < C_1 \ .\]
Therefore, for all $t\geq0$ 

\begin{equation}\label{sas112}
\Pro \bigg( \sum_{i=0}^{m-1} |F(v_i, v_{i+1})-\E F(v_i, v_{i+1})|  \geq t\bigg) \leq \exp \bigg( -\frac{\alpha t}{(K|\bb x|_1)^{\chi''}} \bigg) C_1^{m}\ .  
\end{equation}

Choosing $t= C_2m|\bb x|_1^{\chi''} \log |\bb x|_1$ for $C_2$ large enough, a simple union bound combining \eqref{skeleton} with \eqref{sas112} implies that there exist constants $C_3$ and $C_4>0$ such that if $|\bb x|_1\geq C_3$ then

$$ \Pro \bigg( \exists \;  \vec{\mathbf v} \in \mathcal S_m \; \text{such that} \;  \sum_{i=0}^{m-1} |F(v_i, v_{i+1})-\E F(v_i, v_{i+1})|  \geq C_2m|\bb x|_1^{\chi''} \log |\bb x|_1 \bigg) \leq e^{-C_4 m \log |\bb x|_1} \ .$$
This however implies that $|\bb x|_1$ bigger than some $C_5$,
\begin{align}
\Pro \bigg( \exists \; m \geq 1, \;\; \vec{\mathbf v} \in \mathcal S_m \; \text{such that} \;  \sum_{i=0}^{m-1} |F(v_i, v_{i+1})-\E F(v_i, v_{i+1})|  &\geq C_2m|\bb x|_1^{\chi''} \log |\bb x|_1 \bigg) \nonumber \\
 &\leq (1/2) e^{-C_4 m \log |\bb x|_1} \label{eq:chalk}\ .
\end{align}

Once equation \eqref{eq:chalk} is established one can follow the same lines as in the proof of \cite[Proposition~3.4]{Alexander} to show that the hypothesis of Lemma~\ref{AlexL} is satisfied. Namely, we obtain:

\begin{lemma} There exists a constant $C$ such that if $|\bb x |_1 \geq C$ then for sufficiently large $n$ there exists a directed lattice path from $\bb 0$ to $n\bb x$ with a skeleton of $2n+1$ or fewer vertices.
\end{lemma}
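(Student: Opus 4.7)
The plan is to adapt the recursive skeleton construction from \cite[Proposition~3.4]{Alexander} to the directed setting. The argument proceeds by strong induction on $|\bb x|_1$: in the course of the induction we simultaneously promote Lemma~\ref{AlexL}'s conclusion $h(\bb y) \leq f(\bb y) + C'|\bb y|_1^{\chi''}\log|\bb y|_1$ from vectors smaller than $\bb x$ (the inductive hypothesis) to the skeleton statement for $\bb x$ itself.

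The first key step is a geometric selection lemma: for $|\bb x|_1$ large, there exists $\bb w \in \mathbb Z^d_+$ with $|\bb w|_1 \in [|\bb x|_1/2,\,|\bb x|_1]$ pointing nearly in the direction of $\bb x$, such that $\bb w \in Q_{\bb x}(C_1,K)$. The norm bound and the inequality $f_{\bb x}(\bb w) \leq f(\bb x)$ are immediate from the directional alignment. The decisive condition
\[
h(\bb w) \;\leq\; f_{\bb x}(\bb w) + C_1|\bb x|_1^{\chi''}
\]
follows from the inductive hypothesis applied at $\bb w$ (which yields $h(\bb w) \leq f(\bb w) + C'|\bb w|_1^{\chi''}\log|\bb w|_1$) combined with a bound of the form $f(\bb w) - f_{\bb x}(\bb w) \leq C''|\bb x|_1^{\chi''}$, which comes from convexity of $f$ and the curvature Assumption~\ref{assumption1} applied along the direction of approach to $\bb x$. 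The absorption of constants into $C_1 = 320d^2/\alpha$ was prearranged precisely to accommodate this computation.

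Second, given such a $\bb w$, the skeleton is built greedily: set $v_0=\bb 0$ and, as long as $v_i+\bb w\leq n\bb x$ coordinatewise, let $v_{i+1}=v_i+\bb w$. Once this fails, the residual $n\bb x-v_i$ has $\ell^1$-norm less than $|\bb w|_1\leq|\bb x|_1$, and we close the gap with at most one additional step using a smaller vector produced by the same selection lemma. Since $|\bb w|_1\geq|\bb x|_1/2$, the number of steps is at most $2n$, giving at most $2n+1$ skeleton vertices. Because each $v_{i+1}-v_i$ has nonnegative coordinates, concatenating arbitrary directed lattice paths between consecutive $v_i$'s produces a bona fide directed lattice path from $\bb 0$ to $n\bb x$.

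The main obstacle is the self-referential character of the induction: constructing a short skeleton for $\bb x$ requires the $h-f$ estimate from Lemma~\ref{AlexL} for smaller vectors, but Lemma~\ref{AlexL}'s conclusion is itself what one wishes to bootstrap. Alexander's insight, which transfers verbatim, is that the recursion only invokes the estimate for vectors of norm roughly $|\bb x|_1/2$, so strong induction on $|\bb x|_1$ closes the loop. The remaining work is geometric bookkeeping, in fact simpler in the directed setting because the monotonicity constraint $v_i\leq v_{i+1}$ is automatic once $\bb w\in\mathbb Z^d_+$.
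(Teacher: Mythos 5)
Your construction does not close, and it is not the argument of \cite[Proposition~3.4]{Alexander} that the paper follows. The decisive step in your plan is to place the half-scale increment $\bb w$ in $Q_{\bb x}(C_1,K)$, i.e.\ to verify $h(\bb w) \leq f_{\bb x}(\bb w) + C_1|\bb x|_1^{\chi''}$ with the constant $C_1 = 320 d^2/\alpha$ fixed in advance. The only estimate your induction can supply is the conclusion of Lemma~\ref{AlexL} at smaller scales, $h(\bb w) \leq f(\bb w) + C'|\bb w|_1^{\chi''}\log|\bb w|_1$. This is both circular and quantitatively insufficient. Circular, because Lemma~\ref{AlexL} is a global implication: its hypothesis must hold for \emph{every} $\bb x$ with $|\bb x|_1 \geq M$ before it yields its conclusion for any $\bb x$, so there is no scale-by-scale statement available to induct on; obtaining any polynomial rate for $h-f$ at a single scale is precisely what Alexander's method is designed to produce, and assuming it at scale $|\bb x|_1/2$ begs the question. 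Insufficient, because even granting a scale-local version, the output constant $C'$ is not controlled by the prearranged $C_1$, and the extra logarithm makes $C'|\bb w|_1^{\chi''}\log|\bb w|_1 \gg C_1|\bb x|_1^{\chi''}$ for $|\bb w|_1 \approx |\bb x|_1/2$ and $|\bb x|_1$ large, so $\bb w$ never enters $Q_{\bb x}$ and the greedy skeleton does not get started. (Your appeal to Assumption~\ref{assumption1} is also misplaced: curvature is assumed only in the direction $\bb e$, while this lemma is needed for all $\bb x \in \Z^d_+$; for a lattice point $\bb w$ within bounded distance of the segment $[\bb 0,\bb x]$ one only needs Lipschitz bounds to compare $f(\bb w)$ with $f_{\bb x}(\bb w)$, so that part is a side issue --- the real obstruction is the $h$ versus $f_{\bb x}$ bound.)

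The paper's proof is probabilistic and uses exactly the ingredients your proposal never invokes: the uniform-over-skeletons concentration bound \eqref{eq:chalk} and parts 2 and 3 of Lemma~\ref{lemma1111}. Following \cite[Proposition~3.4]{Alexander}, one takes a directed path from $\bb 0$ to $n\bb x$ (in the polymer setting, the paths carrying the Gibbs weight) and marks its skeleton by successive exits from translates of $Q_{\bb x}$. Increments exiting through $D_{\bb x}$ advance the linear functional $f_{\bb x}$ by at least $5f(\bb x)/6$, and since the total advance is $f_{\bb x}(n\bb x)=nf(\bb x)$ there are at most about $6n/5$ of them; increments exiting through $\Delta_{\bb x}$ carry an excess $h-f_{\bb x} \geq C_1|\bb x|_1^{\chi''}(\log|\bb x|_1)/2$, and \eqref{eq:chalk} together with the almost sure bound $F(\bb 0,n\bb x) \leq nf(\bb x)+o(n)$ forces, with high probability, that such increments occur at most of order $n$ times (this is where the size of $C_1$ relative to the skeleton-entropy constant in \eqref{skeleton} matters). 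This produces, for all large $n$, a path with a skeleton of at most $2n+1$ vertices, with no recursion on scales. To repair your write-up you would have to replace the induction by this exit-marking argument, or else prove a quantified, self-improving scale-by-scale analogue of Lemma~\ref{AlexL} with matched constants --- a substantially harder task than the ``geometric bookkeeping'' you describe.
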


We finish this section with the proof of Lemma \ref{lem: AC}.

\begin{proof}[Proof of Lemma \ref{lem: AC}] Given $\chi' >\chi_a$, let $\chi''$ be such that $\chi'>\chi''>\chi_a$. Taking $\alpha$ as in Definition~\ref{def3}, Lemma \ref{AlexL} (applied to $\chi''$) combined with the triangle inequality implies the existence of $C,C'>0$ such that for all $\bb x \in \Z^d_{+} \setminus \{\bb 0\}$, 

\[\E \exp \bigg(\alpha \frac{|F(\bb 0,\bb x)-f(\bb x)|}{|\bb x |_1^{\chi'}}\bigg) \leq C \E \exp \bigg( \alpha \frac{|F(\bb 0,\bb x)-\E F(\bb 0, \bb x)|}{|\bb x |_1^{\chi'}}\bigg) \ < C'\ .\]

\end{proof}

\bigskip
\noindent
{\bf Acknowledgements.} We thank K. Alexander and N. Zygouras for the explaining their recent results and the method used in the proof of Lemma~\ref{lem: AC}. We also thank C. Newman for suggesting the idea to extend the results of \cite{AD2} to positive temperature. A. A. thanks the Courant Institute for hospitality during visits while some of this work was done. M. D. thanks the Courant Institute and C. Newman for summer funds and support. Last, we thank L.-P. Arguin for telling us about Skype's ``screen share'' and so helping us to complete this work.

\thebibliography{1}

\bibitem{AKQ} T. Alberts, K. Khanin and J. Quastel. (2012). Intermediate Disorder Regime for 1+1 Dimensional Directed Polymers. preprint. {\it arXiv:1202.4398}

\bibitem{AlexZ} K. S. Alexander and N. Zygouras.  (2012). Subgaussian concentration and rates of convergence in directed polymers. preprint. {\it arXiv:1204.1819}

\bibitem{Alexander} K. S. Alexander (1997). Approximation of subadditive functions and convergence rates in limiting-shape results.  {\it Ann. Probab.} {\bf 25} 30--55.

\bibitem{ABC} A. Auffinger, J. Baik and I. Corwin. (2012). Universality for directed polymers in thin rectangles. preprint. {\it arXiv:1204.4445}

\bibitem{AD2}
A. Auffinger and M. Damron. (2011). A simplified proof of the relation between scaling exponents in first-passage percolation. preprint. {\it arXiv:1109.0523}

\bibitem{Azuma} K. Azuma. (1967). Weighted sums of certain dependent random variables. {\it Tohoku Math. J.} {\bf 19}, no. 3, 357--367.

\bibitem{BCD}
A. Borodin,  I. Corwin and D. Remenik. (2012). Log-gamma polymer free energy fluctuations via a fredholm identity. preprint. {\it arXiv:1206.4573}

\bibitem{Sourav}
S. Chatterjee. (2011). The universal relation between scaling exponents in first-passage percolation. To appear in {\it Annals of Math.}

\bibitem{Corwin} I. Corwin. (2011). The Kardar-Parisi-Zhang equation and universality class. (2012). {\it Random Matrices: Theory and Applications}, {\bf 1}.

\bibitem{CDurrett}
T. Cox and R. Durrett. (1981). Some limit theorems for percolation with necessary and sufficient conditions. {\it Ann. Probab.} {\bf 9} 583--603.

\bibitem{CKesten}
T. Cox and  H. Kesten. (1981). On the continuity of the time constant of first-passage percolation.
{\it J. Appl. Probab.} {\bf 18}, no. 4, 809--819.

\bibitem{Hoffman} C. Hoffman. (2008). Geodesics in first passage percolation. {\it Ann. Appl. Probab.} {\bf 18} 1944--1969.

\bibitem{Joha} K. Johansson. (2000). Shape fluctuations and random matrices. 
{\it Comm. Math. Phys.} {\bf 209}, no. 2, 437--476. 

\bibitem{Joha2} K. Johansson. (2001). Transversal fluctuations for increasing subsequences on the plane
{\it Probab. Theory and Related Fields.} {\bf 116}, no. 4, 445--456.

\bibitem{KPZ}
M. Kardar, G. Parisi and Y. Zhang. (1986). Dynamic scaling of growing interfaces. {\it Phys. Rev. Lett.} {\bf 56} 889--892.

\bibitem{Kesten}
H. Kesten. Aspects of first-passage percolation. {\it \'Ecole d'\'et\'e de probabilit\'es de Saint-Flour, XIV--1984,} 125--264, Lecture Notes in Math., 1180, {\it Springer, Berlin}, 1986.

\bibitem{Kesten93}
H. Kesten. (1993). On the speed of convergence in first-passage percolation. {\it Ann. Appl. Probab.} {\bf 3} 296--338.

\bibitem{JMartin03}
J. Martin. (2004). Limiting shape for directed percolation models. {\it Ann. Probab.} {\bf 32} 2908--2937.

\bibitem{NP}
C. Newman and M. Piza. (1995). Divergence of shape fluctuations in two dimensions. {\it Ann. Probab.} {\bf 23} 977--1005.

\bibitem{Piza}
M. Piza. (1997). Directed polymers in a random environment: Some results on fluctuations.  {\it J. Stat. Phys.} {\bf 89} 581--603.

\bibitem{Richardson}
D. Richardson. (1973). Random growth in a tessellation. {\it Proc. Cambridge Philos. Soc.} {\bf 74} 515--528.

\bibitem{Sepa} T. Sepp{\"a}l{\"a}inen. (2012). Scaling for a one-dimensional directed polymer with boundary conditions. {\it Ann. Probab.}, {\bf 40} 19--73.

\end{document}